\newtheorem{thm}{Theorem}[section]
\newtheorem{cor}[thm]{Corollary}
\newtheorem{lem}[thm]{Lemma}
\newtheorem{prop}[thm]{Proposition}
\theoremstyle{definition}
\newtheorem{ass}[thm]{Assumption}
\theoremstyle{remark}
\newtheorem{rem}[thm]{Remark}
\numberwithin{equation}{section}
\newcommand{\norm}[1]{\left\Vert#1\right\Vert}
\newcommand{\Real}{\mathbb R}
\newcommand{\F}{\mathcal{F}}
\newcommand{\prob}{\mathbb{P}}
\newcommand{\expec}{\mathbb{E}}
\newcommand{\indic}{\mathbb{I}}
\newcommand{\pare}[1]{\left(#1\right)}
\newcommand{\bra}[1]{\left[#1\right]}
\newcommand{\qprob}{\mathbb{Q}}
\newcommand{\nada}[1]{}
\newcommand{\BMO}{\texttt{BMO}}
\newcommand{\C}{\mathcal{C}}
\newcommand{\cS}{\mathcal{S}}
\newcommand{\cM}{\mathcal{M}}
\newcommand{\cY}{\mathcal{Y}}
\newcommand{\bY}{\mathbb{Y}}
\newcommand{\bZ}{\mathbb{Z}}
\newcommand{\cH}{\mathcal{H}}
\newcommand{\fF}{\mathfrak{F}}
\begin{document}

\title[Consumption investment optimization with Epstein-Zin utility]{Consumption investment optimization with Epstein-Zin utility in incomplete markets}\thanks{The author is grateful to Anis Matoussi for inspiring discussions, to Paolo Guasoni for valuable comments on the draft,  to two anonymous referees and the editor Jak\v{s}a Cvitani\'{c} for their precise comments which help me to improve this paper.}

\author[]{Hao Xing}
\address{Department of Statistics,
London School of Economics and Political Science,
10 Houghton st,
London, WC2A 2AE,
UK}
\email{h.xing@lse.ac.uk}

\begin{abstract}
 In a market with stochastic investment opportunities, we study an optimal consumption investment problem for an agent with recursive utility of Epstein-Zin type. Focusing on the empirically relevant specification where both risk aversion and elasticity of intertemporal substitution are in excess of one, we characterize optimal consumption and investment strategies via backward stochastic differential equations. The supperdifferential of indirect utility is also obtained, meeting demands from applications in which Epstein-Zin utilities were used to resolve several asset pricing puzzles. The empirically relevant utility specification introduces difficulties to the optimization problem due to the fact that the Epstein-Zin aggregator is neither Lipschitz nor jointly concave in all its variables.
\end{abstract}

\keywords{Consumption investment optimization, Epstein-Zin utility, Backward stochastic differential equation}

\date{\today}

\maketitle

\section{Introduction}\label{sec: intro}
Risk aversion and elasticity of intertemporal substitution (EIS) are two parameters describing two different aspects of preferences: risk aversion measures agent's attitude toward risk, while EIS regulates agent's willingness to substitute consumption over time. However commonly used time separable utilities force EIS to be the reciprocal of risk aversion, leading to a rich literature on asset pricing anomalies, such as the equity premium puzzle, the risk-free rate puzzle, the excess volatility puzzle, the credit spread puzzle, and etc.

Recursive utilities of \emph{Kreps-Porteus} or \emph{Epstein-Zin} type and their continuous-time analogue disentangle risk aversion and EIS, providing a framework to resolve aforementioned asset pricing puzzles, cf. \cite{Bansal-Yaron} and \cite{Bansal} for the equity premium puzzle and the risk-free rate puzzle, \cite{Benzoni-et-al} for the excess volatility puzzle, and \cite{Bhamra-et-al} for the credit spread puzzle. All these studies require EIS $\psi$ to be larger than $1$ in order to match empirical observations. Bansal and Yaron \cite{Bansal-Yaron} also empirically estimated $\psi$ to be around $1.5$. On the other hand, empirical evidence suggests that risk aversion $\gamma$ is in excess of $1$. It then follows from $\gamma>1$ and $\psi>1$ that $\gamma \psi>1$. Hence an agent with such a utility specification prefers early resolution of uncertainty (cf. \cite{Kreps-Porteus} and \cite{Skiadas}), therefore asks a sizeable risk premium to compensate future uncertainty in the state of economy.

Other than aforementioned utility specification, two other ingredients are also important in these asset pricing applications. First, investment opportunities in these models are driven by some state variables, which usually lead to unbounded market price of risk; for example, Heston model in \cite{Chacko-Viceira}, \cite{Kraft}, and \cite{Liu}, Kim and Omberg model in \cite{Kim-Omberg} and \cite{Wachter-port}. Second, the first step in all these applications is to understand the superdifferential of the indirect utility for the representative agent, because it is the source to read out equilibrium risk-free rate and risk premium, cf. \cite[Appendix]{Bansal-Yaron}. Therefore, it is important to rigorously study the consumption investment problem simultaneously accounting these three ingredients: utility specification, models with unbounded market price of risk, and superdifferential of indirect utility. However, the following literature review shows that, such a study, in a continuous-time setting, was still missing from the literature. This paper fills this gap.

In the seminal paper by Duffie and Epstein \cite{Duffie-Epstein}, stochastic differential utilities (the continuous-time analogue of recursive utilities, cf. \cite{Kraft-Seifried}) are assumed to have Lipschitz continuous aggregators. Hence the Epstein-Zin aggregator, which is non-Lipschitz, is excluded. Schroder and Skiadas \cite{Schroder-Skiadas-JET} studied the case where $\theta = \frac{1-\gamma}{1-1/\psi}$ is positive.\footnote{The parameter $1+\alpha$ in \cite{Schroder-Skiadas-JET} is $\theta$ here. Hence equation (8c) therein implies $\theta>0$.} However the empirically revelent parameter specification $\gamma,\psi>1$ leads to $\theta<0$. Kraft, Seifried, and Steffensen \cite{KSS-FS} studied incomplete market models with unbounded market price of risk, however their assumption on $\gamma$ and $\psi$ (cf. Equation (H) therein) excludes the case $\gamma>1$ and $\psi>1$.

Regarding market models, Schroder and Skiadas \cite{Schroder-Skiadas-JET} studied a complete market with bounded market price of risk. Schroder and Skiadas \cite[Section 5.6]{Schroder-Skiadas-SPA}, Chacko and Viceira \cite{Chacko-Viceira} both considered incomplete markets and Epstein-Zin utility with unit EIS. Chacko and Viceira \cite{Chacko-Viceira}, Kraft, Seifried, and Steffensen \cite{KSS-FS} studied a market model whose investment opportunities are driven by a square root process, leading to unbounded market price of risk.

Regarding the superdifferential of indirect utility, its form can be obtained by a heuristic calculation using the utility gradient approach, cf. \cite{Duffie-Skiadas}. However, rigorous verification needs the aggregator to satisfy a Lipschitz growth condition (cf. \cite{Duffie-Epstein} and \cite{Duffie-Skiadas}), or joint concavity in both consumption and utility variables (cf. \cite{ElKaroui-Peng-Quenez-AAP}). As we shall see later, when $\gamma>1$ and $\psi>1$, the Epstein-Zin aggregator is neither Lipschitz continuous nor joint concave. On the other hand, for Epstein-Zin utility with $\theta>0$, Schroder and Skiadas \cite{Schroder-Skiadas-JET} verified the superdifferential via an integrability condition (cf. \cite[Lemma 2]{Schroder-Skiadas-JET}) and the property that the sum of deflated wealth process and integral of deflated consumption stream is a supermartingale for arbitrary admissible strategy, and is a martingale for the optimal strategy (cf. \cite[Equation (1)]{Schroder-Skiadas-JET}). Both these two conditions are verified in \cite[Theorem 2 and 4]{Schroder-Skiadas-JET} for complete market models with bounded market price of risk.

In this paper, we analyze a consumption investment problem for an agent with Epstein-Zin utility with $\gamma, \psi>1$ and a bequest utility at a finite time horizon. This agent invests in an incomplete market whose investment opportunities are driven by a multi-variate state variable. Rather than the Campbell-Shiller approximation, which is widely applied for utilities with non-unit EIS, we study the exact solution. As illustrated in \cite[Section 6]{KSS-FS}, there can be a sizeable deviation of the Campbell-Shiller approximation from the exact solution, highlighting the importance of exact solution.

A similar problem has also been studied recently by Kraft, Seiferling, and Seifried \cite{Kraft-Seiferling-Seifried}. In this paper, the relation between $\gamma$ and $\psi$ in \cite{KSS-FS} is removed, all configurations of $\gamma$ and $\psi$ are considered including the $\gamma,\psi>1$ case. Verification result is obtained following the utility gradient approach in \cite{Duffie-Skiadas} and \cite{Schroder-Skiadas-JET}, complemented by a recent note of Seiferling and Seifried \cite{Seiferling-Seifried} for the $\gamma, \psi>1$ case. Nevertheless, \cite{Kraft-Seiferling-Seifried} focuses on models with bounded market price of risk (cf. Assumptions (A1) and (A2) therein). This excludes models, such as Heston model and Kim-Omberg models, which are widely used in aforementioned asset pricing applications.  Comparing to \cite{Kraft-Seiferling-Seifried} and all other aforementioned existing results, the current paper extends the previous literature in three respects.

First, in contrast to the utility gradient approach, the verification result is obtained by comparison results for backward stochastic differential equations (BSDE). Rather than employing the dynamic programming method as in \cite{KSS-FS} and \cite{Kraft-Seiferling-Seifried}, optimal consumption and investment strategies are represented by a BSDE solution, cf. Theorem \ref{thm: verification} below. Extending techniques of Hu, Imkeller, and M\"{u}ller \cite{Hu-Imkeller-Muller} and Cheridito and Hu \cite{Cheridito-Hu}, who studied optimal consumption investment problems for time separable utilities, we verify the candidate optimal strategies for Epstein-Zin utility.

Second, our method is designed for market models with unbounded market value of risk. Utilizing Lyapunov functions, borrowed from \cite[Chapter 10]{Stroock-Varadhan}, we prove in Lemma \ref{lem: martingale} below that certain exponential local martingale is martingale, which is a key component of our verification argument.

Third, we verify the superdifferential of indirect utility. Comparing to \cite{Schroder-Skiadas-JET}, the integrability condition in Lemma 2 therein is satisfied when $\gamma,\psi>1$.\footnote{The specification $\gamma,\psi>1$ is related to \cite[Case 3 in page 113]{Schroder-Skiadas-JET}, which established the utility gradient inequality. Even through its proof is independent of market model, it uses the existence and concavity of Epstein-Zin utility, which are established in \cite[Appendix A]{Schroder-Skiadas-JET} under the assumption $\theta>0$. Therefore one needs to replace \cite[Appendix A]{Schroder-Skiadas-JET} by Propositions \ref{prop: EZ bsde} and \ref{prop: EZ concave} below which confirm the existence and concavity of Epstein-Zin utility when $\theta <0$. During the revision of this paper, these properties are also confirmed in \cite{Seiferling-Seifried} for a general semimartingale setting.}  For the second step of verification in \cite{Schroder-Skiadas-JET} and \cite{Kraft-Seiferling-Seifried}, it requires that the sum of deflated wealth process and integral of deflated consumption stream is a supermartingale for any admissible strategy, and is a martingale for the optimal one. We obtain this property (see Theorem \ref{thm: state price} below) as a by-product of our verification result. This result is established for models with unbounded market price of risk, hence meets demands coming from aforementioned applications on asset pricing puzzles.

Our general results in Section \ref{sec: main} are specialized to two examples in Section \ref{sec: application}. There numeric results reveal an interesting phenomenon. As time horizon goes to infinity, convergence of the finite horizon solution to its stationary long run limit is very slow when $\psi>1$. Figure \ref{fig: horizon} shows that this convergence takes at least $60$ years in an empirically revelent utility and market setting. Moreover, the convergence is sensitive to the time discounting parameter: it is much slower when the discounting parameter decreases slightly.  This is in contrast to the $\psi<1$ case, where the convergence is much faster (around $20$ years) and is less sensitive to the time discounting parameter. This observation implies that, in the $\psi>1$ setting, the finite horizon optimal strategy can be far away from its infinite horizon analogue, even when we consider a lifelong consumption investment problem.

The remaining of this paper is organized as follows. After Epstein-Zin utility is introduced in Section \ref{subsec: Epstein-Zin}, the consumption investment problem is introduced and main results are presented in Section \ref{subsec: portfolio opt}. Then main results are specialized in two examples in Section \ref{sec: application}, where general assumptions of main results are verified under explicit parameters restrictions, which include many empirically relevant cases. All proofs are postponed to appendices.

\section{Main results}\label{sec: main}
\subsection{Epstein-Zin preferences}\label{subsec: Epstein-Zin}

We work on a filtered probability space $(\Omega, (\F_t)_{0\leq t\leq T}, \F, \prob)$. Here $(\F_t)_{0\leq t\leq T}$ is the augmented filtration generated by a $k+n-$dimensional Wiener process $B=(W, W^{\bot})$, where $W$ and $W^{\bot}$ are the first $k$ and the last $n$ components, respectively, and satisfies the usual hypotheses of right-continuity and completeness.

Let $\mathcal{C}$ be the class of nonnegative progressively measurable processes on $[0,T]$. For $c\in \C$ and $t<T$, $c_t$ stands for the consumption rate at $t$ and $c_T$ represents a lump sum consumption at $T$. We consider an agent whose preference over $\mathcal{C}-$valued consumption streams is described by a continuous time stochastic differential utility of \emph{Kreps-Porteus} or \emph{Epstein-Zin} type. To describe this preference, let $\delta >0$ represent the discounting rate, $0<\gamma \neq 1$ be the relative risk aversion, and $0<\psi\neq 1$ be the EIS. We focus on the $\gamma>1$ case. In this case,
define the Epstein-Zin \emph{aggregator} $f: [0,\infty) \times (-\infty, 0] \rightarrow \Real$ via
\begin{equation}\label{eq: EZ agg}
 f(c,v) := \delta \,\frac{(1-\gamma)v}{1-\frac{1}{\psi}}
 \bra{\pare{\frac{c}{((1-\gamma)v)^{\frac{1}{1-\gamma}}}}^{1-\frac{1}{\psi}}-1}.
\end{equation}
This is a standard parametrization used, for instance, in \cite{Duffie-Epstein-pricing}.
Given a bequest utility function $U(c) = c^{1-\gamma}/(1-\gamma)$, the \emph{Epstein-Zin utility} over the consumption stream $c\in \mathcal{C}$ on a finite time horizon $T$ is a process $V^c$ which satisfies
\begin{equation}\label{eq: EZ utility}
 V^c_t = \expec_t \bra{\int_t^T f(c_s, V_s^c) \, ds + U(c_T)}, \quad \text{ for all } t\in[0,T],
\end{equation}
where $\expec_t$ stands for $\expec[\cdot |\F_t]$.

\begin{rem}\label{rem: theta=1}
 Epstein-Zin utility generalizes the standard time separable utility with constant relative risk aversion. Indeed, when $\gamma = 1/\psi$, the aggregator reduces to $f(c,v)= \delta \frac{c^{1-\gamma}}{1-\gamma} - \delta v$. Then \eqref{eq: EZ utility} with $t=0$ can be represented explicitly as the standard time separable utility:
 \[
  V_0^c = \expec \bra{\int_0^T \delta e^{-\delta s} \frac{c_s^{1-\gamma}}{1-\gamma} \, ds + e^{-\delta T} U(c_T)}.
 \]
 As discussed in introduction, we are interested in the empirical relevant case where $\gamma>1$ and $\psi >1$. In this case, $\gamma = 1/\psi$ is violated, hence \eqref{eq: EZ utility} is not time separable.
\end{rem}

When $c$ follows a diffusion, the existence of $V^c$ was established by Duffie and Lions \cite{Duffie-Lions} via partial differential equation techniques. We work with a non-Markovian setting and construct $V^c$ via the following BSDE:
\begin{equation}\label{eq: EZ bsde}
 V_t^c = U(c_T) + \int_t^T f(c_s, V^c_s) \, ds - \int_t^T Z^c_s \, dB_s, \quad 0\leq t\leq T.
\end{equation}
Denote
\[
 \theta:= \frac{1-\gamma}{1-\frac{1}{\psi}}.
\]
When $\gamma,\psi>1$, $\theta<0$.
The generator in \eqref{eq: EZ bsde} is
\[
f(c,v)= \delta \frac{c^{1-\frac{1}{\psi}}}{1-\frac{1}{\psi}} \pare{(1-\gamma)v}^{1-\frac{1}{\theta}} -\delta \theta v.
\]
Then $f$ has super-linear growth in $v$ when $\theta<0$. Therefore the BSDE \eqref{eq: EZ bsde} does not have a Lipschitz generator. Nevertheless, consider $(Y_t, Z_t) := e^{-\delta \theta t} (1-\gamma)  (V^c_t, Z^c_t)$ and the following transformed BSDE:
\begin{equation}\label{eq: bsde trans}
 Y_t = e^{-\delta \theta T} c_T^{1-\gamma} + \int_t^T F(s, c_s, Y_s) \, ds - \int_t^T Z_s dB_s, \quad \text{ where } F(t, c_t, y):= \delta \theta e^{-\delta t} c_t^{1-\frac{1}{\psi}} y^{1-\frac{1}{\theta}}.
\end{equation}
When $\theta<0$, the generator $F$ in \eqref{eq: bsde trans} satisfies the \emph{monotonicity condition}, i.e., $y\mapsto F(t, c_t, y)$ is decreasing. This allows us to establish the existence and uniqueness of solutions to \eqref{eq: EZ bsde}, hence define $V^c$ satisfying \eqref{eq: EZ utility}.

Let us introduce the set of \emph{admissible} consumption streams as
\[
\C_a:= \left\{c\in \C\,:\, \expec\bra{\int_0^T e^{-\delta s}c_s^{1-\frac{1}{\psi}}ds} <\infty \text{ and }  \expec\bra{c_T^{1-\gamma}} <\infty\right\}.\footnote{This admissible set is similar to its counterpart in \cite{Cheridito-Hu} for time separable utilities, but is larger than its analogue in \cite{Schroder-Skiadas-JET}, where  $\expec\bra{\int_0^T c_s^\ell ds}<\infty$ for \emph{all} $\ell \in \Real$ is needed for an admissible consumption stream $c$.}
\]

\begin{prop}\label{prop: EZ bsde}
 Suppose $\gamma,\psi>1$ and $c\in \C_a$. Then \eqref{eq: bsde trans} admits a unique solution $(Y, Z)$ in which $Y$ is continuous, strictly positive, and is of class $D$, $\int_0^T |Z_t|^2 dt <\infty$ a.s.. Moreover, $V^c_t:= e^{\delta \theta t} Y_t/(1-\gamma)$, $t\in[0,T]$, satisfies \eqref{eq: EZ utility}.
\end{prop}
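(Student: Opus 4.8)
The plan is to reduce \eqref{eq: bsde trans} to a BSDE with a bounded, Lipschitz generator by a suitable domain truncation, then remove the truncation using the monotonicity structure and a priori bounds. First I would establish a priori estimates on any solution $(Y,Z)$ with $Y>0$. Since $\theta<0$ we have $1-\tfrac1\theta\in(0,1)$, so the map $y\mapsto y^{1-1/\theta}$ is concave on $(0,\infty)$; combined with $\delta\theta<0$ this gives $F(t,c,y)\le 0$ and $y\mapsto F(t,c,y)$ decreasing, i.e.\ the monotonicity condition holds globally (with monotonicity constant $0$). From $F\le 0$ and the terminal condition $e^{-\delta\theta T}c_T^{1-\gamma}$, an application of the comparison principle against the (super)solution given by the conditional expectation $\expec_t[e^{-\delta\theta T}c_T^{1-\gamma}]$ yields the two-sided bound
\[
0< Y_t\le \expec_t\bra{e^{-\delta\theta T}c_T^{1-\gamma}},\qquad 0\le t\le T,
\]
and since $\expec[c_T^{1-\gamma}]<\infty$ for $c\in\C_a$, the right-hand side is a true martingale, so $Y$ is dominated by a martingale and hence of class $D$; strict positivity will follow from the terminal condition being a.s.\ positive together with $F(t,c,\cdot)$ vanishing at $0$.

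Next I would construct the solution. Fix $\varepsilon\in(0,1)$ and truncate the $y$-argument: set $F^\varepsilon(t,c_t,y):=\delta\theta e^{-\delta t}c_t^{1-1/\psi}(y\vee\varepsilon)^{1-1/\theta}$ for $y\ge 0$ and extend it to be constant in $y$ for $y\le\varepsilon$ (and, if needed for classical existence theory, also cap it from above for large $y$, which is harmless since the a priori upper bound is finite). The driver $F^\varepsilon$ is bounded by $\delta|\theta|e^{-\delta t}c_t^{1-1/\psi}$ times a constant and is globally Lipschitz in $y$ because the power function $(\cdot)^{1-1/\theta}$ has bounded derivative on $[\varepsilon,\infty)$. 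With $c\in\C_a$ the coefficient $c_t^{1-1/\psi}\in L^1(dt\otimes d\prob)$, so the driver has the integrability needed for the $L^1$/$L^2$ existence-and-uniqueness theory for BSDEs with integrable stochastic Lipschitz drivers (e.g.\ the monotone-driver results of \cite{ElKaroui-Peng-Quenez-AAP} together with Briand--Hu type $L^1$ arguments); this gives a unique solution $(Y^\varepsilon,Z^\varepsilon)$ with $Y^\varepsilon$ of class $D$ and $\int_0^T|Z^\varepsilon_t|^2dt<\infty$ a.s. The comparison principle applied to $F^\varepsilon$ (still monotone and nonpositive) reproduces $0<Y^\varepsilon_t\le\expec_t[e^{-\delta\theta T}c_T^{1-\gamma}]$, a bound uniform in $\varepsilon$.

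The key step is to show $Y^\varepsilon$ stays bounded away from $0$ uniformly, so that for small $\varepsilon$ the truncation is inactive and $(Y^\varepsilon,Z^\varepsilon)$ solves the original equation. Here one uses that $1-\gamma<0$, so $c_T^{1-\gamma}$, while possibly small, is multiplied by $e^{-\delta\theta T}>1$, and that the drift $F^\varepsilon\le 0$ only pushes $Y$ down over $[t,T]$ by a controlled amount; more precisely, writing $Y^\varepsilon_t=\expec_t[e^{-\delta\theta T}c_T^{1-\gamma}+\int_t^T F^\varepsilon(s,c_s,Y^\varepsilon_s)ds]$ and using $|F^\varepsilon(s,c_s,y)|\le \delta|\theta|e^{-\delta s}c_s^{1-1/\psi}\,(y\vee\varepsilon)^{1-1/\theta}$ together with the uniform upper bound on $Y^\varepsilon$, one gets a Gronwall-type lower estimate showing $\inf_{t,\varepsilon}Y^\varepsilon_t>0$ on the set where the data are nondegenerate; a localization/stopping argument handles the general case. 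Monotonicity then gives uniqueness directly: if $(Y,Z)$ and $(Y',Z')$ are two solutions with $Y,Y'>0$, applying It\^o to $|Y-Y'|$ and using that $y\mapsto F(t,c,y)$ is decreasing kills the drift sign, and a standard $L^1$ estimate forces $Y\equiv Y'$. Finally, setting $V^c_t:=e^{\delta\theta t}Y_t/(1-\gamma)$ and reversing the transformation $(Y_t,Z_t)=e^{-\delta\theta t}(1-\gamma)(V^c_t,Z^c_t)$ turns \eqref{eq: bsde trans} back into \eqref{eq: EZ bsde}, and taking conditional expectations in \eqref{eq: EZ bsde} (legitimate because $\int_0^\cdot Z^c\,dB$ is a true martingale, the class-$D$ property handling the uniform integrability) yields \eqref{eq: EZ utility}.

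The main obstacle I anticipate is exactly the uniform-in-$\varepsilon$ strict positivity of $Y^\varepsilon$: the generator's super-linear (here: non-Lipschitz-at-$0$) behavior of $y\mapsto y^{1-1/\theta}$ near $0$ is what makes \eqref{eq: bsde trans} genuinely non-standard, and controlling it requires combining the sharp sign of the drift, the finite upper bound, and the integrability of $c_s^{1-1/\psi}$ in a Gronwall argument that is robust under the admissibility condition defining $\C_a$ rather than the stronger moment conditions used in \cite{Schroder-Skiadas-JET}. The class-$D$ claim and the $Z$-integrability are then comparatively routine consequences of the domination by a martingale and the standard BSDE energy estimate.
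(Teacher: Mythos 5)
There is a genuine gap, and it originates in a sign error on the exponent. With $\theta<0$ you have $-1/\theta>0$, hence $1-\tfrac1\theta>1$, not $1-\tfrac1\theta\in(0,1)$ as you claim. Consequently $y\mapsto y^{1-1/\theta}$ is \emph{convex}, has superlinear growth, and is perfectly smooth at $y=0$ with vanishing derivative (the paper notes exactly this when justifying that its truncated generator is Lipschitz). Your entire construction is built around the opposite picture: you truncate the generator \emph{below} at $\varepsilon$, declare the ``key step'' to be a uniform lower bound $\inf_{t,\varepsilon}Y^\varepsilon>0$ so that the truncation near $0$ is inactive, and identify the ``non-Lipschitz-at-$0$'' behaviour as the main obstacle. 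None of that difficulty exists. Meanwhile the step you dismiss as harmless --- capping the generator from above for large $y$ --- is the actual crux: $F(t,c_t,\cdot)$ is \emph{not} globally Lipschitz on $[\varepsilon,\infty)$ because its derivative $(1-\tfrac1\theta)y^{-1/\theta}$ blows up as $y\to\infty$, and the a priori upper bound $Y_t\le\expec_t[e^{-\delta\theta T}c_T^{1-\gamma}]$ is a martingale, not a constant, so for $c\in\C_a$ (where $c_T^{1-\gamma}$ is merely integrable) you cannot choose a single finite cap and claim the truncation is never active. The paper resolves this by truncating both $|y|\wedge n$ and $c_t^{1-1/\psi}\wedge n$, using monotonicity ($F^n$ decreasing in $n$) to get a monotone limit with weak-$L^2$ compactness of $Z^n$ for \emph{bounded} terminal data, and then running the Briand--Hu localization along $\tau_k=\inf\{t:\expec_t[\xi]\ge k\}$ to reach general integrable terminal conditions. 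Your proposal contains no substitute for this two-stage construction.

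Two smaller points. Your a priori bound $0<Y_t\le\expec_t[e^{-\delta\theta T}c_T^{1-\gamma}]$, the class-$D$ conclusion, the monotonicity-based comparison/uniqueness, and the strict positivity via strict comparison are all correct in spirit and match the paper's Step~3. But the final passage to \eqref{eq: EZ utility} is also glossed: you assert that $\int_0^\cdot Z^c_s\,dB_s$ is a true martingale, whereas one only knows $\int_0^T|Z_t|^2dt<\infty$ a.s.; the paper instead localizes, uses the signs of the two integrands (valid because $V^c\le0$ and $\psi>1$) to apply monotone convergence, and needs the separate observation that $\expec[\int_0^TV^c_s\,ds]>-\infty$ (from $0\le Y_s\le\expec_s[\xi]$) before the drift term can be subtracted off. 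As written, the proposal does not yield a proof without first correcting the exponent and rebuilding the approximation scheme around truncation from above.
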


\begin{rem}\label{rem: L2 terminal}
 When a BSDE satisfies the monotonicity condition, it is customary to assume its terminal condition to be square integrable, cf. \cite[Theorem 2.2]{Pardoux-note}. However this imposes unnecessary restrictions for later described utility maximization problem, in the sense that the bequest utility needs to be square integrable to define the associated Epstein-Zin utility. Therefore, Proposition \ref{prop: EZ bsde} only asks for the terminal condition to be an integrable random variable.
\end{rem}

Having defined $V^c_0$, we expect that, as a utility functional,  $\C_a \ni c \mapsto V^c_0$ is concave. This would follow from the standard argument when $f(c,v)$ is jointly concave in $c$ and $v$, cf. \cite[Proposition 5]{Duffie-Epstein}. However, calculation shows that $f$ in \eqref{eq: EZ agg} is \emph{not} jointly concave when $\gamma>1$ and $\psi>1$.\footnote{$f$ is jointly concave in $c$ and $v$ if and only if $\gamma\psi\leq1$.} Nevertheless, utilizing an orderly equivalent transformation of $V^c_0$, introduced in \cite[Example 3]{Duffie-Epstein}, the following proposition confirms the concavity of $c\mapsto V^c_0$.

Let us define $(\bY, \bZ):= (Y^{1/\theta}, \frac{1}{\theta} Y^{1/\theta-1}Z)/(1-\frac{1}{\psi})$. Calculation shows that $(\bY, \bZ)$ satisfies
\begin{equation}\label{eq: bsde ord trans}
 \bY_t = e^{-\delta T}\frac{c_T^{1-\frac{1}{\psi}}}{1-\frac{1}{\psi}} + \int_t^T \bra{\delta e^{-\delta s} \frac{c_s^{1-\frac{1}{\psi}}}{1-\frac{1}{\psi}} + \frac12 (\theta-1) \frac{\bZ^2_s}{\bY_s}} \,ds - \int_t^T \bZ_s dB_s.
\end{equation}
Observe that the generator of \eqref{eq: bsde ord trans} is now jointly concave in $(c, \bY, \bZ)$ when $\theta<1$.

\begin{prop}\label{prop: EZ concave}
 When $\gamma, \psi>1$, for any $c, \tilde{c} \in \mathcal{C}_a$, and $\alpha\in[0,1]$, if $\alpha C + (1-\alpha) \tilde{c} \in \mathcal{C}_a$, then
 \[
  \alpha V_0^c + (1-\alpha) V^{\tilde{c}}_0 \leq V^{\alpha c + (1-\alpha) \tilde{c}}_0.
 \]
\end{prop}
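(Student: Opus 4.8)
The plan is to transfer concavity from the ordinally equivalent process $(\bY,\bZ)$ of \eqref{eq: bsde ord trans} to $V^c_0$. Recall that the terminal value $c_T\mapsto e^{-\delta T}c_T^{1-1/\psi}/(1-1/\psi)$ of \eqref{eq: bsde ord trans} is concave (as $1-1/\psi\in(0,1)$ when $\psi>1$) and its generator $g(s,c,y,z):=\delta e^{-\delta s}c^{1-1/\psi}/(1-1/\psi)+\tfrac12(\theta-1)|z|^2/y$ is jointly concave in $(c,y,z)$ on $[0,\infty)\times(0,\infty)\times\Real^{k+n}$, because $(y,z)\mapsto|z|^2/y$ is jointly convex on $\{y>0\}$ and $\theta-1<0$. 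So I would (i) prove that $c\mapsto\bY_0$ is concave on $\C_a$ by the standard argument for BSDEs with concave data — form the convex combination of the solutions and invoke the comparison principle — and (ii) transport this to $V^c_0$ through the explicit map linking $\bY_0$ and $V^c_0$, checking that this map is increasing \emph{and} concave. Concavity is the crucial point: ordinal equivalence of the two utility functionals $V^c_0$ and $\bY_0$ forces the linking map to be increasing, but not, a priori, concave.

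For (i): fix $c,\tilde c\in\C_a$ and $\alpha\in[0,1]$, and put $\bar c:=\alpha c+(1-\alpha)\tilde c$, which lies in $\C_a$ by assumption. Let $(\bY^c,\bZ^c)$, $(\bY^{\tilde c},\bZ^{\tilde c})$, $(\bY^{\bar c},\bZ^{\bar c})$ be the solutions of \eqref{eq: bsde ord trans} for these three streams obtained, via the stated transformation, from Proposition \ref{prop: EZ bsde}; in particular each $\bY$-component is continuous and strictly positive. Put $\hat\bY:=\alpha\bY^c+(1-\alpha)\bY^{\tilde c}$, $\hat\bZ:=\alpha\bZ^c+(1-\alpha)\bZ^{\tilde c}$. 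By concavity of the terminal value and joint concavity of $g$, $\hat\bY_t=\hat\xi+\int_t^T\hat g_s\,ds-\int_t^T\hat\bZ_s\,dB_s$ with $\hat\xi\le e^{-\delta T}\bar c_T^{1-1/\psi}/(1-1/\psi)$ and $\hat g_s\le g(s,\bar c_s,\hat\bY_s,\hat\bZ_s)$; hence $\hat\bY$ is a subsolution of \eqref{eq: bsde ord trans} driven by $\bar c$, and a comparison principle for that equation gives $\hat\bY_0\le\bY^{\bar c}_0$, that is $\bY^{\bar c}_0\ge\alpha\bY^c_0+(1-\alpha)\bY^{\tilde c}_0$.

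For (ii): from $(Y_t,Z_t)=e^{-\delta\theta t}(1-\gamma)(V^c_t,Z^c_t)$ and $(\bY,\bZ)=(Y^{1/\theta},\tfrac{1}{\theta}Y^{1/\theta-1}Z)/(1-1/\psi)$ one has $Y_0=(1-\gamma)V^c_0$, hence $V^c_0=h(\bY_0)$ with $h(\eta):=((1-1/\psi)\eta)^{\theta}/(1-\gamma)$, $\eta>0$. Using $\theta(1-1/\psi)=1-\gamma$, a short computation gives $h'(\eta)=((1-1/\psi)\eta)^{\theta-1}>0$ and $h''(\eta)=(\theta-1)(1-1/\psi)((1-1/\psi)\eta)^{\theta-2}<0$, so $h$ is increasing and concave on $(0,\infty)$. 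Therefore
\[
 V_0^{\bar c}=h(\bY^{\bar c}_0)\ge h\bigl(\alpha\bY^c_0+(1-\alpha)\bY^{\tilde c}_0\bigr)\ge\alpha h(\bY^c_0)+(1-\alpha)h(\bY^{\tilde c}_0)=\alpha V^c_0+(1-\alpha)V^{\tilde c}_0,
\]
the first inequality by monotonicity of $h$ and concavity of $c\mapsto\bY_0$, the second by concavity of $h$; this is the assertion.

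The delicate step is the comparison in (i). The generator $g$ carries the term $|z|^2/y$, so it is neither Lipschitz nor globally monotone in $(y,z)$, and the terminal datum $c_T^{1-1/\psi}$ need not be bounded (only $\expec[c_T^{1-\gamma}]<\infty$ is assumed, which controls $c_T$ near $0$ but not near $\infty$). I would handle this by localizing at stopping times along which the relevant $\bY$-processes stay in a compact subset of $(0,\infty)$: on such stochastic intervals \eqref{eq: bsde ord trans} is quadratic in $\bZ$ with bounded coefficient, $\bZ$ is locally in \BMO, and the linearized difference $\bY^{\bar c}-\hat\bY$ solves a linear BSDE with nonnegative terminal value, nonnegative source, and \BMO\ coefficients, to which the standard comparison applies; the localization is then removed using continuity and the class-$D$ property from Proposition \ref{prop: EZ bsde}, and the passage from $c,\tilde c$ bounded away from $0$ and $\infty$ to general admissible streams uses monotone approximation and stability of the transformed BSDE.
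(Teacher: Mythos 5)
Your overall architecture is the right one and matches the paper's: form the convex combination at the level of the ordinally transformed equation \eqref{eq: bsde ord trans}, use joint concavity of its generator and of the terminal datum to see that $\hat\bY=\alpha\bY^c+(1-\alpha)\bY^{\tilde c}$ is a subsolution for the stream $\bar c$, and then transfer the resulting inequality to $V_0$ through the power map (your computation that $h(\eta)=((1-1/\psi)\eta)^\theta/(1-\gamma)$ is increasing and concave is correct and is equivalent to the paper's use of the convexity of $\eta\mapsto\eta^\theta$ for $\theta<0$). The problem is step (i): you propose to prove the comparison ``subsolution $\le$ solution'' directly for the quadratic equation \eqref{eq: bsde ord trans}, and this is exactly where the difficulty of the whole proposition sits. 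The generator there is \emph{increasing} in $y$ (since $\partial_y\bigl(\tfrac12(\theta-1)|z|^2/y\bigr)=-\tfrac12(\theta-1)|z|^2/y^2\ge 0$), so the linearized exponential factor is $\ge 1$ and unbounded, the $z$-coefficient $(\theta-1)z/y$ blows up as $y\downarrow 0$, and the terminal datum is unbounded. Most seriously, your plan to remove the localization ``using the class-$D$ property from Proposition \ref{prop: EZ bsde}'' does not go through: that proposition gives class $D$ for $Y$, and $\bY=Y^{1/\theta}/(1-1/\psi)$ with $1/\theta<0$ is a \emph{negative} power of $Y$, so it is large precisely where $Y$ is small; no uniform integrability of $\bY^c_{\tau_n}$, $\bY^{\tilde c}_{\tau_n}$ (hence of $\hat\bY_{\tau_n}$) follows from what has been established. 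The deferred approximation argument for general admissible streams is likewise not carried out. So as written, (i) is a genuine gap, not a routine technicality.

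The paper avoids all of this with one extra move that you should adopt: instead of comparing on the $\bY$-level, map the convex combination \emph{back} to the monotone equation \eqref{eq: bsde trans}. Set $\Delta Y=((1-1/\psi)\hat\bY)^\theta$ and $\Delta Z=(1-\gamma)((1-1/\psi)\hat\bY)^{\theta-1}\hat\bZ$; because $\eta\mapsto\eta^\theta$ is decreasing, the subsolution property of $\hat\bY$ (nonnegative correction term $A_t$, terminal value $\le\xi^{\bar c}$) turns into a \emph{super}solution property of $(\Delta Y,\Delta Z)$ for \eqref{eq: bsde trans} with stream $\bar c$ (drift correction $(1-\gamma)\Delta Y^{1-1/\theta}A_t\le 0$, terminal value $\ge e^{-\delta\theta T}\bar c_T^{1-\gamma}$). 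Convexity of $\eta\mapsto\eta^\theta$ gives $\Delta Y\le\alpha Y+(1-\alpha)\tilde Y$, which both yields class $D$ of $\Delta Y$ for free and is exactly the inequality you need at time $0$. The comparison is then performed on \eqref{eq: bsde trans}, whose generator is \emph{decreasing} in $y$, so the linearization coefficient is $\le 0$ and the comparison for class-$D$ super/sub-solutions (Step~3 of the proof of Proposition \ref{prop: EZ bsde}) applies with no localization, no \BMO, and no approximation. This gives $\alpha Y_0+(1-\alpha)\tilde Y_0\ge\Delta Y_0\ge Y_0^{\bar c}$, and dividing by $1-\gamma<0$ finishes the proof.
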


\begin{rem}
 The integrability condition in $\mathcal{C}_a$ does not implies the convexity of $\mathcal{C}_a$. Indeed, since $\psi>1$, $\expec[\int_0^T e^{-\delta s} \beta_s^{1-\frac{1}{\psi}}ds]<\infty$ for both $\beta = c$ and $\tilde{c}$ does not imply the same integrability for $\alpha c + (1-\alpha) \tilde{c}$. However Proposition \ref{prop: EZ concave} implies the concavity of $c\mapsto V^c$ on any convex subset of $\mathcal{C}_a$, for example, $\mathcal{C}_a^1 = \{c\in \mathcal{C}_a\,:\, \expec[\int_0^T e^{-\delta s} c_s ds]<\infty\}$.
\end{rem}

\subsection{Consumption investment optimization}\label{subsec: portfolio opt}
Having established the existence of Epstein-Zin utility in the previous section, we consider an optimal consumption investment problem for an agent with such a utility.

Consider a model of a financial market with a risk free asset $S^0$ and risky assets $S=(S^1, \dots, S^n)$ with dynamics
\begin{equation}\label{eq: SDE S}
\begin{split}
 dS^0_t &= S^0_t r(X_t) dt,\\
 dS_t &= \text{diag}(S_t) \bra{\pare{r(X_t) 1_n + \mu(X_t)} dt + \sigma(X_t) dW^\rho_t},
\end{split}
\end{equation}
where $\text{diag}(S)$ is a diagonal matrix with elements of $S$ on the diagonal, $1_n$ is a $n-$dimensional vector with every entry $1$. Given a correlation function $\rho: \Real^k \rightarrow \Real^{n\times k}$ and $\rho^\bot: \Real^k \rightarrow \Real^{n\times n}$, satisfying $\rho \rho' + \rho^\bot (\rho^\bot)' = 1_{n\times n}$ (the $n\times n$ identity matrix), $W^\rho := \int_0^\cdot \rho(X_s) dW_s + \int_0^\cdot \rho^{\bot}(X_s)  dW^\bot_s$ defines a $n-$dimensional Brownian motion. In \eqref{eq: SDE S}, $X$ is a $E-$valued state variable satisfying
\begin{equation}\label{eq: sde X}
 dX_t = b(X_t) dt + a(X_t) dW_t, \quad X_0=x\in E.
\end{equation}
Here $E\subseteq \Real^k$ is an open domain, $r: E \rightarrow \Real$, $\mu: E \rightarrow \Real^n$, $\sigma: E\rightarrow \Real^{n\times n}$, $b:E \rightarrow \Real^k$, and $a: E \rightarrow \Real^{k\times k}$. These model coefficients satisfy following assumptions.

\begin{ass}\label{ass: coeff}
 $r$, $\mu$, $\sigma$, $b$, $a$, and $\rho$ are all locally Lipschitz in $E$; $A:= aa'$ and $\Sigma = \sigma \sigma'$ are positive definite in any compact subdomain of $E$; $r+ \frac{1}{2\gamma} \mu' \Sigma^{-1} \mu$ is bounded from below on $E$, moreover, dynamics of \eqref{eq: sde X} does not hit boundary of $E$ in finite time.
\end{ass}

In the previous assumption, local Lipschitz continuity of coefficients and the nonexplosion assumption combined imply that \eqref{eq: sde X} admits a unique $E$-valued strong solution $X$. When the interest rate $r$ is bounded from below, due to $\tfrac{1}{2\gamma} \mu' \Sigma^{-1} \mu\geq 0$, $r+ \frac{1}{2\gamma} \mu'\Sigma^{-1} \mu$ is bounded from below as well.

An agent, whose preference is described by an Epstein-Zin utility, invests in this financial market. Given an initial wealth $w$, an investment strategy $\pi$, and a consumption rate $c$, the wealth of the agent follows
\begin{equation}\label{eq: wealth}
 d\mathcal{W}^{\pi, c}_t = \mathcal{W}^{\pi, c}_t \bra{(r_t + \pi'_t \mu_t) dt + \pi'_t \sigma_t dW^\rho_t} -c_t dt, \quad \mathcal{W}^{\pi, c}_0=w.
\end{equation}
Throughout the paper, $r_t, \mu_t, \rho_t, \sigma_t$ stand for $r(X_t), \mu(X_t), \rho(X_t)$, and $\sigma(X_t)$, respectively, and the superscript $(\pi, c)$ is sometimes suppressed on $\mathcal{W}$ to simplify notation. A pair of investment strategy and consumption stream $(\pi, c)$ is \emph{admissible} if $c\in \C_a$ and its associated wealth process is nonnegative.
The agent aims to maximize her utility $V^c_0$.

We will further restrict admissible strategies to a permissible set. But let us first characterize the optimal value process via a heuristic argument.
By homothetic property of Epstein-Zin utility, we speculate that utility evaluated at the optimal strategy has the following decomposition\footnote{The decomposition \eqref{eq: value factor} is widely used for (time-separable) power utilities, cf. eg. \cite{Pham}.}:
\begin{equation}\label{eq: value factor}
 V^*_t = \frac{\mathcal{W}_t^{1-\gamma}}{1-\gamma} e^{Y_t}, \quad t\in[0,T],
\end{equation}
where $Y$ satisfies the following BSDE
\begin{equation}\label{eq: op bsde1}
 Y_t = \int_t^T H(s, Y_s, Z_s, Z^\bot_s) \, ds - \int_t^T Z_s \, d W_s - \int_t^T Z^\bot_s dW^\bot_s.
\end{equation}

Let us determine the generator $H$ in what follows. Parameterizing $c$ by $c = \tilde{c} \,\mathcal{W}$, the wealth process satisfies
\[
 \frac{d\mathcal{W}_t}{\mathcal{W}_t} = (r_t  - \tilde{c}_t + \pi'_t \mu_t) dt + \pi'_t\sigma_t dW^\rho_t.
\]
We expect from the standard dynamic programming principle that $\frac{\mathcal{W}_t^{1-\gamma}}{1-\gamma} e^{Y_t} + \int_0^t f\pare{c_s, \frac{\mathcal{W}^{1-\gamma}_s}{1-\gamma} e^{Y_s}} ds$ is a supermartingale martingale for arbitrary strategy, and is a martingale for the optimal strategy. Let us calculate the drift of the previous process. Calculation shows that
\begin{align*}
 d\mathcal{W}^{1-\gamma}_t &= \mathcal{W}_t^{1-\gamma}\bra{(1-\gamma) (r_t - \tilde{c}_t + \pi'_t \mu_t) - \frac{\gamma(1-\gamma)}{2} \pi'_t \Sigma_t \pi_t} dt + (1-\gamma) \mathcal{W}^{1-\gamma}_t \pi'_t \sigma dW^\rho_t.\\
 d e^{Y_t} &= e^{Y_t} \pare{-H(t, Y_t, Z_t, Z^\bot_t) + \frac12 Z_t Z'_t + \frac12 Z^\bot_t (Z^\bot_t)'} dt + e^{Y_t}\pare{Z_t dW_t + Z^\bot_t dW^\bot_t}.
\end{align*}
Therefore, the drift of $\frac{\mathcal{W}_s^{1-\gamma}}{1-\gamma} e^{Y_t} + \int_0^t f\pare{c_s, \frac{\mathcal{W}_s^{1-\gamma}}{1-\gamma} e^{Y_s}} ds$ reads (the time subscript is omitted to simplify notation)
\begin{equation}\label{eq: R drift}
\begin{split}
 \frac{\mathcal{W}^{1-\gamma}}{1-\gamma} e^{Y}&\left\{(1-\gamma) r - \delta \theta + \frac12 Z Z' + \frac12 Z^\bot (Z^\bot)' + \bra{-(1-\gamma) \tilde{c} + \delta \theta e^{-\frac{1}{\theta} Y} \tilde{c}^{1-\frac{1}{\psi}}}\right.\\
 &\left. + \bra{- \frac{\gamma(1-\gamma)}{2} \pi' \Sigma \pi + (1-\gamma) \pi' (\mu + \sigma \rho Z + \sigma \rho^\bot Z^\bot)} - H(\cdot, Y, Z, Z^\bot)\right\}.
\end{split}
\end{equation}
We expect that the drift above is negative for arbitrary $(\pi, \tilde{c})$ and is zero for the optimal strategy. Therefore, the generator $H$ for \eqref{eq: op bsde1} can be obtained by taking supremum on $\pi$ and $\tilde{c}$ in the previous drift and setting it to be zero. Following this direction, we notice that the randomness in $H$ comes only from $X$, which is driven by $W$, moreover, the terminal condition of \eqref{eq: op bsde1} is zero. As a result, $Z^\bot$ is necessarily zero. Therefore, we can reduce \eqref{eq: op bsde1} to
\begin{equation}\label{eq: op bsde}
 Y_t = \int_t^T H(s, Y_s, Z_s) ds -\int_t^T Z_s dW_s,
\end{equation}
where $H$ is given by
\begin{equation}\label{eq: H}
\begin{split}
 H(t, y, z) =& (1-\gamma)r_t  -\delta \theta + \frac12 zz' + \inf_{\tilde{c}} \left[-(1-\gamma) \tilde{c} + \delta \theta e^{-\frac{1}{\theta} y} {\tilde{c}}^{1-\frac{1}{\psi}}\right] \\
 &+ \inf_{\pi} \bra{-\frac{\gamma(1-\gamma)}{2} \pi' \Sigma_t \pi + (1-\gamma) \pi' (\mu_t + \sigma_t \rho_t z')}\\
 =& \frac12 z M_t z' + \frac{1-\gamma}{\gamma} \mu'_t \Sigma^{-1}_t \sigma_t \rho_t z' + \theta \frac{\delta^\psi}{\psi} e^{-\frac{\psi}{\theta} y} + h_t -\delta \theta.
\end{split}
\end{equation}
Here, suppressing the subscript $t$,
\begin{align*}
 &\Sigma := \sigma \sigma'(X),  \quad M := 1_{k\times k}+ \frac{1-\gamma}{\gamma} \rho' \sigma' \Sigma^{-1} \sigma \rho(X), \quad  \text{ and } \quad h:= (1-\gamma) r(X) + \frac{1-\gamma}{2\gamma} \mu' \Sigma^{-1} \mu(X),
\end{align*}
where $1_{k\times k}$ is the $k\times k$-identity matrix.
Recall from Assumption \ref{ass: coeff} that $r + \frac{1}{2\gamma} \mu' \Sigma^{-1} \mu$ is bounded from below. Therefore $\gamma>1$ implies that there exists a positive constant $h_{max}$ such that $h\leq h_{max}$ on $E$.
The infimum in \eqref{eq: H} are due to $\gamma>1$, and they are attained at
\begin{equation}\label{eq: opt st}
 \pi^*_t = \frac{1}{\gamma} \Sigma^{-1}_t \pare{\mu_t + \sigma_t \rho_t Z'_t} \quad \text{ and } \quad \frac{c^*_t}{\mathcal{W}_t^*} = \tilde{c}^*_t = \delta^\psi e^{-\frac{\psi}{\theta} Y_t}, \quad t\in[0,T),
\end{equation}
where $\mathcal{W}^*$ is the wealth process associated to the strategy $(\pi^*, c^*)$. Therefore $\pi^*$ and $c^*$ are candidate optimal strategies.

Coming back to \eqref{eq: op bsde}, even though the generator $H$ has an exponential term in $y$ and a quadratic term in $z$, the parameter specification $\gamma, \psi>1$ allows us to derive a priori bounds on $Y$. In particular, $Y$ is bounded from above by a constant. Meanwhile, since the quadratic term of $z$ in $H$ will be shown to be nonnegative, the lower bound of $Y$ can be obtained by studying a BSDE whose generator does not contain this quadratic term. As a result, a solution to \eqref{eq: op bsde} can be constructed under the following mild integrability conditions.

\begin{ass}\label{ass: op bsde}
 $\,$
 \begin{enumerate}
  \item[i)] $\frac{d\overline{\prob}}{d\prob} = \mathcal{E}\pare{\int \frac{1-\gamma}{\gamma} \mu' \Sigma^{-1} \sigma \rho(X_s) dW_s}_T$ defines a probability measure $\overline{\prob}$ equivalent to $ \prob$;
  \item[ii)] $\expec^{\overline{\prob}}\bra{\int_0^T h(X_s) ds}>-\infty$.
 \end{enumerate}
 Here $\mathcal{E}(\int \alpha_s dW_s)_T:= \exp\pare{-\frac12 \int_0^T |\alpha_s|^2 ds + \int_0^T \alpha_s dW_s}$ denotes the stochastic exponential for $\int_0^T \alpha_s dW_s$.
\end{ass}
\begin{rem}
 Since the generator $H$ contains a linear term in $z$, it is natural to apply Girsanov theorem. Assumption \ref{ass: op bsde} i) allows us to do this and write \eqref{eq: op bsde} under $\overline{\prob}$. This assumption can be checked by explosion criteria; see Section \ref{sec: application} for examples. In ii), the standard exponential moment condition in \cite{Briand-Hu} is avoid, due to the special structure of $H$: the quadratic term in $z$ is nonnegative, and $H(\cdot, 0,0)$ is bounded from above by $h_{max} -\delta \theta$.
\end{rem}

\begin{prop}\label{prop: existence}
 When $\gamma,\psi >1$, let Assumption \ref{ass: op bsde} hold.
 Then \eqref{eq: op bsde} admits a solution $(Y, Z)$ such that, for any $t\in[0,T]$,
        \begin{equation}\label{eq: Y bdd}
         \expec^{\overline{\prob}}_t \bra{\int_t^T h(X_s) \, ds} - \delta \theta(T-t) +\theta \frac{\delta^\psi}{\psi} e^{(\delta \psi-\frac{\psi}{\theta} h_{\max}) T} (T-t) \leq Y_t \leq -\delta \theta (T-t)+\log\expec^{\overline{\prob}}_t\bra{ \exp{\pare{\int_t^T h(X_s)\,ds}}},
        \end{equation}
 and $\expec[\int_0^T |Z_s|^2 ds] <\infty$.
 In particular, since $h\leq h_{max}$, $Y$ is bounded from above by $(h_{max}-\delta \theta) T$.
\end{prop}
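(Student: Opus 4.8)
\emph{Plan.} The plan is to remove the linear term in $z$ from \eqref{eq: H} by a Girsanov change, construct a solution of the transformed equation by a monotone approximation, and extract the bounds \eqref{eq: Y bdd} along the way. By Assumption \ref{ass: op bsde}(i) the process $\overline{W}_t := W_t - \int_0^t (\tfrac{1-\gamma}{\gamma}\mu'\Sigma^{-1}\sigma\rho)'(X_s)\,ds$ is a $\overline{\prob}$-Brownian motion, and since $\tfrac{1-\gamma}{\gamma}\mu_t'\Sigma_t^{-1}\sigma_t\rho_t z'$ is exactly the term absorbed by this drift shift, \eqref{eq: op bsde} is equivalent under $\overline{\prob}$ to
\[
 Y_t = \int_t^T \Big[\tfrac12 Z_s M_s Z_s' + \Phi(Y_s) + h_s - \delta\theta\Big]\,ds - \int_t^T Z_s\,d\overline{W}_s, \qquad \Phi(y) := \theta\,\tfrac{\delta^\psi}{\psi}\,e^{-\frac{\psi}{\theta}y}.
\]
Since $0 \le \rho'\sigma'\Sigma^{-1}\sigma\rho \le 1_{k\times k}$ (the middle factor is an orthogonal projection and $\rho'\rho \le 1_{k\times k}$), one gets $\tfrac1\gamma 1_{k\times k} \le M_t \le 1_{k\times k}$, so $0 \le \tfrac12 zM_tz' \le \tfrac12|z|^2$; because $\theta<0$ and $\psi>1$, $\Phi$ is strictly negative and strictly decreasing; and $h \le h_{\max}$ by Assumption \ref{ass: coeff}. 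These three structural facts — a one-signed, uniformly elliptic quadratic term, a monotone sign-definite zeroth-order term, and a drift bounded from above — are what make the mild hypothesis in Assumption \ref{ass: op bsde}(ii) suffice, in place of the usual exponential-moment condition.

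\emph{A priori bounds} (applied below to the bounded approximations $(Y^n,Z^n)$, where the martingales involved are bounded; the bounds for $Y$ follow in the limit). For the upper bound apply It\^o's formula to $N_t := \exp(Y_t + \delta\theta(T-t))$: the $-\delta\theta$ from the exponent cancels the $-\delta\theta$ in the generator and $\tfrac12|Z|^2 - \tfrac12 Z M_t Z' = \tfrac{\gamma-1}{2\gamma}Z\rho'\sigma'\Sigma^{-1}\sigma\rho Z' \ge 0$, so
\[
 dN_t = N_t\Big(\tfrac{\gamma-1}{2\gamma}Z_t\rho_t'\sigma_t'\Sigma_t^{-1}\sigma_t\rho_t Z_t' - \Phi(Y_t) - h_t\Big)\,dt + N_t Z_t\,d\overline{W}_t,
\]
with the first two drift terms nonnegative (recall $-\Phi\ge 0$). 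Let $R_t := \expec^{\overline{\prob}}_t[\exp(\int_t^T h_s\,ds)]$, a bounded positive process with $dR_t = R_t(-h_t\,dt + \beta_t\,d\overline{W}_t)$ since $\exp(\int_0^\cdot h_s\,ds)R_\cdot$ is a bounded martingale (here $h \le h_{\max}$ is used); under a further Girsanov change driven by that martingale, $N_\cdot/R_\cdot$ becomes a submartingale with terminal value $N_T/R_T = 1$, whence $N_t \le R_t$, i.e.\ $Y_t \le -\delta\theta(T-t) + \log R_t \le (h_{\max}-\delta\theta)(T-t) =: C$. For the lower bound, drop the nonnegative term $\tfrac12 ZM_sZ'$ and, using $Y_s \le C$ and the monotonicity of $\Phi$, bound $\Phi(Y_s) \ge \Phi(C) = \theta\tfrac{\delta^\psi}{\psi}e^{(\delta\psi - \frac{\psi}{\theta}h_{\max})T}$; the generator is then $\ge \Phi(C) + h_s - \delta\theta$, which lies in $L^1(\overline{\prob}\times ds)$ by Assumption \ref{ass: op bsde}(ii) together with $h \le h_{\max}$, so comparison with the corresponding linear equation gives $Y_t \ge \expec^{\overline{\prob}}_t[\int_t^T h_s\,ds] + (\Phi(C)-\delta\theta)(T-t)$. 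These two inequalities are exactly \eqref{eq: Y bdd}.

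\emph{Existence.} For $n \ge 1$ set $h^n_t := h_t \vee (-n)$, $\Phi_n(y) := \Phi(y\wedge n)$ and $G_n(t,y,z) := \tfrac12 zM_tz' + \Phi_n(y) + h^n_t - \delta\theta$; each $G_n$ has quadratic growth in $z$ with bounded coefficients, is globally Lipschitz and monotone in $y$, and has bounded data, so a Kobylanski-type existence theorem supplies a bounded solution $(Y^n,Z^n)$ with $Z^n \in \BMO(\overline{\prob})$. As $G_n$ decreases in $n$, comparison gives $Y^{n+1} \le Y^n$, so $Y^n \downarrow Y$; the bounds above apply to each $(Y^n,Z^n)$ and for $n \ge C$ are uniform in $n$, so $Y$ is finite and satisfies \eqref{eq: Y bdd}. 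Applying It\^o's formula to $Y^n$ itself, and using $\tfrac12 zM_sz' \ge \tfrac1{2\gamma}|z|^2$, the uniform upper bound, the uniform lower bound (which makes $\{Y^n\}$ of class $D$ under $\overline{\prob}$, being dominated by $|\delta\theta|T + \expec^{\overline{\prob}}_\cdot[\int_0^T|h_s|\,ds]$, a uniformly integrable family by Assumption \ref{ass: op bsde}(ii) and $h \le h_{\max}$), together with a localization, one obtains $\sup_n \expec^{\overline{\prob}}[\int_0^T|Z^n_s|^2\,ds] < \infty$. This uniform estimate together with $Y^n \downarrow Y$ lets us pass to the limit in $G_n$ via the standard stability argument for quadratic, monotone-in-$y$ BSDEs (strong $L^2(ds)$ convergence of $Z^n$ is obtained as usual from It\^o's formula applied to an exponential of $Y^n - Y^m$); hence $(Y,Z)$ solves the transformed equation, therefore \eqref{eq: op bsde}, and $\expec[\int_0^T|Z_s|^2\,ds] < \infty$ follows by Fatou's lemma.

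\emph{Main obstacle.} The delicate point is the a priori upper bound. The naive substitution $N = \exp(Y + \delta\theta(T-t))$ produces a process whose drift has the \emph{opposite} sign to what a one-shot supermartingale argument would want; the correct inequality emerges only after the second, less obvious Girsanov change driven by the martingale generated by $\exp(\int h)$. Turning the resulting sub/supermartingales into genuine — not merely local — ones despite an unbounded market price of risk is precisely where Assumption \ref{ass: op bsde} and the $L^1$-control of $\int h$ enter; and since these same bounds are what guarantees finiteness of the monotone limit $Y$, they must first be established on the truncated problems (where the solutions are a priori bounded) and only then inherited in the limit.
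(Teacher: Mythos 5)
Your proposal is correct and follows essentially the same route as the paper: a Girsanov change under $\overline{\prob}$ to absorb the linear-in-$z$ term, a priori bounds obtained by sandwiching the generator between $\tfrac12|z|^2$ (whose BSDE has the explicit log-exponential solution, equivalently your $N/R$ submartingale argument) and a constant generator, then truncation, Kobylanski existence, monotone comparison, and a Briand--Hu-type localization to pass to the limit and to get $\expec[\int_0^T|Z_s|^2ds]<\infty$. The only cosmetic difference is that the paper shifts $\int_0^t(h_s-\delta\theta)\,ds$ into the terminal condition before truncating, whereas you truncate $h$ directly in the generator; both yield the same uniform bounds \eqref{eq: Y bdd}.
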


Having constructed $(Y, Z)$, the strategies $(\pi^*, c^*)$ in \eqref{eq: opt st} are well defined. To verify their optimality, we need to further restrict the admissible strategies to a permissible set: $(\pi, c)$ is \emph{permissible} if $c\in \C_a$ and $(\mathcal{W}^{\pi,c})^{1-\gamma} e^Y$ is of class $D$ on $[0,T]$.\footnote{When $h$ is bounded from below, for example, both $r$ and $\mu'\Sigma^{-1} \mu$ are bounded, \eqref{eq: Y bdd} implies that $Y$ is bounded from below as well. Then $(\pi, c)$ is permissible if $c\in \mathcal{C}_a$ and $(\mathcal{W}^{\pi, c})^{1-\gamma}$ is of class $D$ on $[0,T]$. This is exactly the definition of permissibility used in \cite{Cheridito-Hu} for the time separable utilities with $\gamma>1$.}

To verify the optimality for $(\pi^*, c^*)$, let us introduce an operator $\fF$. For $\phi\in C^2(E)$,
\begin{equation}\label{eq: fF}
 \fF[\phi]:= \frac12 \sum_{i,j=1}^k A_{ij} \partial^2_{x_i x_j} \phi + \pare{b+ \frac{1-\gamma}{\gamma} a \rho' \sigma' \Sigma^{-1} \mu}' \nabla \phi + \frac12 \nabla \phi' a M a' \nabla \phi + h,
\end{equation}
where the dependence on $x$ is suppressed on both sides. The function $\phi$ in the following assumption is called a Lyapunov function. Its existence facilities proving certain exponential local martingale is in fact martingale, hence verifying optimality of the candidate strategies. This strategy has been applied to portfolio optimization problems for time separable utilities, cf. \cite{Guasoni-Robertson} and \cite{Robertson-Xing-Wishart}.

\begin{ass}\label{ass: phi}
 There exists $\phi\in C^2(E)$ such that
 \begin{enumerate}
  \item[i)] $\lim_{n\rightarrow \infty} \inf_{x\in E\setminus E_n} \phi(x) = \infty$, where $(E_n)_n$ is a sequence of open domains in $E$ satisfying $\cup_{n} E_n = E$, $\overline{E}_n$ compact, and $\overline{E}_n \subset E_{n+1}$, for each $n$;
  \item[ii)] $\fF[\phi]$ is bounded from above on $E$.
 \end{enumerate}
\end{ass}

The final assumption before the main results imposes an integrability assumption on the \emph{market price of risk} $\lambda$. This ensures $\expec\bra{\int_0^T e^{-\delta s} (c_s^*)^{1-1/\psi} ds}<\infty$, hence the admissibility for the candidate optimal consumption stream $c^*$.

\begin{ass}\label{ass: Q^0}
 There exists $\lambda: E \rightarrow \Real^n$  which satisfies $\mu =\sigma \lambda$ and defines a local martingale measure $\qprob^0$ for the discounted asset price via $d\qprob^0/d\prob = \mathcal{E}(\int -\lambda'_s dW^\rho_s)_T$. Moreover
 \begin{equation}\label{eq: Q^0 int}
  \expec^{\qprob^0}\bra{e^{(\psi-1) \int_0^T r_+(X_s) ds} \mathcal{E}\pare{\int \lambda'(X_s)\, dW^0_s}_T^\psi}<\infty,
 \end{equation}
 where $W^0 := W^\rho + \int_0^\cdot \lambda_s ds$ is a $\qprob^0-$Brownian motion and $r_+ = \max\{r, 0\}$.
\end{ass}

\begin{rem}
 The previous assumption is stated under the minimal martingale measure $\mathbb{Q}^0$ (cf. \cite{Follmer-Schweizer}). A careful examination of Lemma \ref{lem: c admissible} shows that this assumption can be replaced by any local martingale measure $\mathbb{Q}$ such that $\expec^{\mathbb{Q}}[\exp((\psi-1) \int_0^T r_+ (X_s) ds) (d\prob/d \mathbb{Q})^\psi]<\infty$.
\end{rem}

\begin{rem}\label{rem: ass phi Q0}
 When $r$ and $\lambda$ are bounded,  Assumption \ref{ass: Q^0} holds automatically and Assumption \ref{ass: phi} is not needed, even for non-Markovian models. Indeed, Assumption \ref{ass: phi} is used to prove the stochastic exponential in Lemma \ref{lem: martingale} below is a martingale. When $r$ and $\lambda$ are bounded, $h$ is bounded, hence $H(\cdot, 0, 0)$ is bounded as well. Therefore, \eqref{eq: Y bdd} implies that $Y$ is bounded, and $\int_0^\cdot Z_s dW_s$ is a \BMO-martingale, cf. eg. \cite[Lemma 3.1]{Morlais}. Then the stochastic exponential in Lemma \ref{lem: martingale} can be proved as a martingale directly. However many models do not have bounded market value of risk. Therefore we retain Assumptions \ref{ass: phi} and \ref{ass: Q^0} in their general forms. These conditions impose some market conditions. In particular, for Markovian models, these conditions will be specified as explicit parameter restrictions in two examples in Section \ref{sec: application} below.
\end{rem}

Now we are ready to state our first main result.

\begin{thm}\label{thm: verification}
 When $\gamma, \psi>1$, let Assumptions \ref{ass: coeff}, \ref{ass: op bsde}, \ref{ass: phi}, and \ref{ass: Q^0} hold. Then $\pi^*$ and $c^*$ in \eqref{eq: opt st} maximize the Epstein-Zin utility among all permissible strategies. Moreover, the optimal Epstein-Zin utility is given by
 \[
  \frac{w^{1-\gamma}}{1-\gamma} e^{Y_0}.
 \]
\end{thm}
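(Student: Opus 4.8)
The plan is to run a martingale-optimality-principle argument of the Hu--Imkeller--Müller / Cheridito--Hu type, but adapted to recursive utility by working with the BSDE \eqref{eq: EZ bsde} defining $V^c$ rather than with a plain expected-utility functional. First I would fix an arbitrary permissible strategy $(\pi,c)$ with associated wealth $\mathcal{W} = \mathcal{W}^{\pi,c}$, set $R^{\pi,c}_t := \frac{\mathcal{W}_t^{1-\gamma}}{1-\gamma} e^{Y_t} + \int_0^t f\!\pare{c_s, \frac{\mathcal{W}_s^{1-\gamma}}{1-\gamma} e^{Y_s}} ds$, and compute its drift via Itô's formula on $\mathcal{W}^{1-\gamma} e^Y$ using the wealth dynamics \eqref{eq: wealth} and the BSDE \eqref{eq: op bsde}. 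By the construction of the generator $H$ in \eqref{eq: H} as the pointwise supremum over $(\pi,\tilde c)$ of the bracketed expression in \eqref{eq: R drift}, this drift is $\le 0$ for every permissible $(\pi,c)$ and $=0$ when $(\pi,c)=(\pi^*,c^*)$ by \eqref{eq: opt st}. Hence $R^{\pi,c}$ is a local supermartingale, and a genuine supermartingale once one checks its local-martingale part is a true martingale (or at least a supermartingale), while $R^{\pi^*,c^*}$ is a local martingale.

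The second step is to turn these (local) martingale statements into a comparison of utilities. Since $\mathcal{W}^{1-\gamma} e^Y$ is of class D by permissibility and $Y$ is bounded above by Proposition \ref{prop: existence}, I would argue that $R^{\pi,c}$ is a class-D supermartingale, so $R^{\pi,c}_0 \ge \expec[R^{\pi,c}_T]$; using $\mathcal{W}_T = \mathcal{W}_T^{\pi,c}$, $e^{Y_T}=1$, and the definition of $V^c$ through \eqref{eq: EZ bsde}, together with the fact (to be established, essentially a uniqueness/comparison argument for \eqref{eq: bsde trans} under the monotonicity condition, cf. Proposition \ref{prop: EZ bsde}) that $V^c_0 = \expec[U(\mathcal{W}_T^{\pi,c}) + \int_0^T f(c_s,V^c_s)\,ds]$, this yields $\frac{w^{1-\gamma}}{1-\gamma} e^{Y_0} \ge V^c_0$. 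For $(\pi^*,c^*)$ one needs the reverse: $R^{\pi^*,c^*}$ is a local martingale that is also of class D (again by permissibility, which must be verified for the candidate — this is where Assumption \ref{ass: Q^0} enters via Lemma \ref{lem: c admissible} to get $c^*\in\C_a$, and Assumption \ref{ass: phi} via Lemma \ref{lem: martingale} to control $(\mathcal{W}^*)^{1-\gamma}e^Y$), hence a true martingale, giving $\frac{w^{1-\gamma}}{1-\gamma}e^{Y_0} = V^{c^*}_0$. Combining the two gives optimality and the stated value $\frac{w^{1-\gamma}}{1-\gamma}e^{Y_0}$.

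The delicate point, and the one I would treat most carefully, is the identification $V^{(\cdot)}_0 = \frac{w^{1-\gamma}}{1-\gamma}e^{Y_0}$ for the optimizer together with the direction of the inequality for suboptimal strategies: because $f$ is not Lipschitz and not jointly concave for $\gamma,\psi>1$, one cannot invoke standard SDU comparison off the shelf. My plan is to transfer everything to the monotone BSDE \eqref{eq: bsde trans} (whose generator $F$ is decreasing in $y$), where a comparison theorem for monotone-generator BSDEs with merely integrable terminal data does apply, and to match $R^{\pi,c}$ with the solution of that BSDE driven by the terminal value $c_T^{1-\gamma}$ and the consumption stream $c$. Concretely: the drift computation shows that $\frac{\mathcal{W}_t^{1-\gamma}}{1-\gamma}e^{Y_t}$ is a supersolution (resp. solution, for $(\pi^*,c^*)$) of \eqref{eq: EZ bsde} with data $(U(\mathcal{W}_T),c)$, so by the monotone comparison principle it dominates (resp. equals) $V^c$ (resp. $V^{c^*}$) pathwise, and evaluating at $t=0$ finishes the proof.

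The main obstacle I anticipate is the integrability/class-D bookkeeping needed to upgrade the local (super)martingales to true ones on both sides simultaneously — in particular verifying that $(\pi^*,c^*)$ is itself permissible (admissibility of $c^*$ from Assumption \ref{ass: Q^0}, and the class-D property of $(\mathcal{W}^*)^{1-\gamma}e^{Y}$ from the Lyapunov/martingale Lemma \ref{lem: martingale} under Assumption \ref{ass: phi}), and making sure the comparison step for the non-Lipschitz, super-linearly growing generator $F$ in \eqref{eq: bsde trans} is legitimate for the (only integrable, not square-integrable) terminal conditions that arise here, as flagged in Remark \ref{rem: L2 terminal}.
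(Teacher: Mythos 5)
Your proposal is correct and follows essentially the same route as the paper: the martingale optimality principle applied to $R^{\pi,c}$, the identification of $\frac{\mathcal{W}^{1-\gamma}}{1-\gamma}e^{Y}$ as a class-D supersolution (solution, for the candidate) of \eqref{eq: EZ bsde}, the comparison theorem for the monotone BSDE \eqref{eq: bsde trans} to conclude, and the verification of permissibility of $(\pi^*,c^*)$ via Assumption \ref{ass: Q^0} and the Lyapunov/martingale Lemma \ref{lem: martingale}. The only imprecision is the intermediate suggestion that $R^{\pi,c}_0\ge \expec[R^{\pi,c}_T]$ directly yields $V^c_0$ (the integrand in $R_T$ involves $\frac{\mathcal{W}^{1-\gamma}}{1-\gamma}e^{Y}$ rather than $V^c$), but you correctly replace this with the supersolution-comparison argument in your final step, which is exactly what the paper does.
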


The second main result below focuses on the superdifferential of indirect utility. Let us first
define the optimal value process
\begin{equation}\label{eq: op value}
 V^*_t := \frac{(\mathcal{W}^*_t)^{1-\gamma}}{1-\gamma} e^{Y_t}, \quad t\in[0,T],
\end{equation}
where $\mathcal{W}^*$ is the optimal wealth process and $Y$ comes from Proposition \ref{prop: existence}.
Schroder and Skiadas \cite{Schroder-Skiadas-JET} conjectured in Assumption C3 therein that the superdifferential is
\begin{equation}\label{eq: state price}
 D^*_t = w^{\gamma} e^{-Y_0} \exp\bra{\int_0^t \partial_v f(c^*_s, V^*_s) ds} \partial_c f(c^*_t, V^*_t), \quad t\in[0,T].
\end{equation}
The constant $w^\gamma e^{-Y_0}$ in \eqref{eq: state price} normalizes $D^*_0$ to be $1$. Indeed, combining \eqref{eq: EZ agg}, \eqref{eq: opt st} and \eqref{eq: op value}, calculation shows that
\begin{eqnarray}
 D^*_t &=& w^{\gamma} e^{-Y_0} \exp\bra{\int_0^t \delta (\theta-1) ((1-\gamma) V^*_s)^{-\frac1\theta} (c^*_s)^{1-\frac1\psi} ds -\delta \theta t} \delta ((1-\gamma) V^*_t)^{1-\frac1\theta}(c^*_t)^{-\frac1\psi}\nonumber \\
 &=&\exp\bra{\int_0^t (\theta-1) \delta^{\psi} e^{-\frac{\psi}{\theta} Y_s} ds - \delta \theta t} \frac{(\mathcal{W}_t^{\pi^*})^{-\gamma} e^{Y_t}}{w^{-\gamma} e^{Y_0}}
 \label{eq: state price tran}.
\end{eqnarray}
Therefore the previous identity implies that $D^*_0=1$ and $D^*$ is nonnegative.

In \cite[Theorems 2 and 4]{Schroder-Skiadas-JET}, $D^*$ is confirmed to be the superdifferential when the market is complete with bounded market price of risk. This is proved using an integrability assumption in \cite[Lemma 2]{Schroder-Skiadas-JET}, together with the property that $\mathcal{W} D^* + \int_0^\cdot D^*_s c_s ds$ is a supermartingale for arbitrary strategy and is a martingale for the optimal strategy. The integrability assumption in \cite[Lemma 2]{Schroder-Skiadas-JET} is satisfied in our case. Indeed, \eqref{eq: state price tran} shows that $\partial_v f(c^*, V^*) = (\theta -1) \delta^\psi e^{-\frac{\psi}{\theta} Y}-  \delta \theta$, which is bounded due to $\theta<0$ and $Y$ is bounded from above. Now the following result confirms aforementioned property for $\mathcal{W} D^* + \int_0^\cdot D^*_s c_s ds$ in markets with unbounded market price of risk.

\begin{lem}\label{lem: deflator}
 For $D^*$ given by \eqref{eq: state price tran}, it satisfies
 \begin{equation}\label{eq: D sde}
  d D^*_t = - r_t D^*_t dt + D^*_t \pare{-\gamma (\pi^*_t)'\sigma_t dW^\rho_t + Z_t dW_t}, \quad D^*_0=1,
 \end{equation}
 where $Z$ comes from Proposition \ref{prop: existence}.
 Therefore, for any admissible strategy $(\pi, c)$, $\mathcal{W} D^* + \int_0^\cdot D^*_s c_s \, ds$ is a nonnegative local martingale, hence a supermartingale.
\end{lem}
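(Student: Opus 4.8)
\textbf{Proof proposal for Lemma \ref{lem: deflator}.}

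The plan is to verify the dynamics \eqref{eq: D sde} by applying It\^o's formula to the expression for $D^*$ in \eqref{eq: state price tran}, and then to deduce the supermartingale property by combining this with the wealth dynamics \eqref{eq: wealth}.

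First I would write $D^*_t = \exp\big(\int_0^t (\theta-1)\delta^\psi e^{-\frac{\psi}{\theta}Y_s}\,ds - \delta\theta t\big)\, (\mathcal{W}^{\pi^*}_t)^{-\gamma} e^{Y_t}\, w^{\gamma}e^{-Y_0}$ and compute its stochastic differential piece by piece. Using the wealth SDE \eqref{eq: wealth} for $\pi=\pi^*$, $c=c^*$, It\^o's formula gives the dynamics of $(\mathcal{W}^{\pi^*})^{-\gamma}$, whose drift contains $-\gamma(r_t - \tilde c^*_t + (\pi^*_t)'\mu_t) + \tfrac{\gamma(\gamma+1)}{2}(\pi^*_t)'\Sigma_t\pi^*_t$ and whose martingale part is $-\gamma (\pi^*_t)'\sigma_t\,dW^\rho_t$. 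For $e^{Y_t}$, from the BSDE \eqref{eq: op bsde} the drift is $(-H(t,Y_t,Z_t) + \tfrac12 Z_tZ_t')e^{Y_t}$ and the martingale part is $Z_t\,dW_t$. Taking the product (there is a cross-variation term $-\gamma(\pi^*_t)'\sigma_t\rho_t Z_t'$ coming from $dW^\rho$ against $dW$), and adding the finite-variation contribution $(\theta-1)\delta^\psi e^{-\frac{\psi}{\theta}Y_t} - \delta\theta$ from the leading exponential factor, the martingale part of $dD^*_t/D^*_t$ is immediately $-\gamma(\pi^*_t)'\sigma_t\,dW^\rho_t + Z_t\,dW_t$ as claimed. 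The drift is where the real work lies: one must substitute the explicit forms of $\pi^*$ and $\tilde c^*$ from \eqref{eq: opt st} and of $H$ from \eqref{eq: H}, and check that all terms except $-r_t$ cancel. This cancellation is exactly the statement that $H$ was \emph{defined} by setting the drift \eqref{eq: R drift} to zero at the optimizers, so the computation is bookkeeping rather than insight, but it needs to be carried through carefully — in particular the $\delta\theta e^{-\frac1\theta Y}\tilde c^{1-1/\psi}$, $-(1-\gamma)\tilde c$, the exponential $\theta\frac{\delta^\psi}{\psi}e^{-\frac{\psi}{\theta}Y}$ term in $H$, and the quadratic-in-$\pi^*$ terms must be reconciled using the first-order conditions defining \eqref{eq: opt st}.

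Once \eqref{eq: D sde} is established, I would turn to the process $N_t := \mathcal{W}^{\pi,c}_t D^*_t + \int_0^t D^*_s c_s\,ds$ for an arbitrary admissible $(\pi,c)$ (not the optimal one). By the product rule, $d(\mathcal{W}^{\pi,c}_t D^*_t)$ has a finite-variation part obtained from the drift of $\mathcal{W}^{\pi,c}$ (namely $\mathcal{W}^{\pi,c}_t(r_t + \pi_t'\mu_t) - c_t$), the drift of $D^*$ (namely $-r_t D^*_t$), and the cross-variation $D^*_t\mathcal{W}^{\pi,c}_t\pi_t'\sigma_t\rho_t(-\gamma\sigma_t'\pi^*_t \cdot\text{(from }dW^\rho)\,) + D^*_t\mathcal{W}^{\pi,c}_t\pi_t'\sigma_t\rho_t Z_t'$ between the $dW^\rho$ part of $\mathcal{W}^{\pi,c}$ and the two martingale parts of $D^*$. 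Recalling $\pi^* = \frac1\gamma\Sigma^{-1}(\mu+\sigma\rho Z')$, the bracket $-\gamma\sigma_t'\pi^*_t + \sigma_t'\cdots$ collapses so that the full drift of $\mathcal{W}^{\pi,c}_t D^*_t$ becomes $-D^*_t c_t - \mathcal{W}^{\pi,c}_t D^*_t \cdot(\text{something})$; adding the $+D^*_t c_t$ from the $\int_0^\cdot D^*_s c_s\,ds$ term should leave a drift of the form $-(\text{nonnegative quantity})\,\mathcal{W}^{\pi,c}_t D^*_t \le 0$ (this is the standard ``state-price deflator kills the drift of any admissible self-financing-plus-consumption wealth'' computation; the non-positivity of the residual drift comes from the fact that $\pi^*$ was the \emph{maximizer} of a concave-in-$\pi$ quadratic, so the residual is $-\tfrac{\gamma}{2}(\pi-\pi^*)'\Sigma(\pi-\pi^*)\mathcal W D^* \cdot$const $\le 0$). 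Hence $N$ is a local martingale plus a non-increasing process; since $\mathcal{W}^{\pi,c}\ge 0$, $D^*\ge 0$ and $c\ge 0$ by admissibility, $N\ge 0$, so $N$ is a nonnegative local supermartingale, therefore a genuine supermartingale by Fatou. I would also note that when the residual drift vanishes — which by the strict concavity happens iff $\pi = \pi^*$ and $c=c^*$ — $N$ is a nonnegative local martingale.

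The main obstacle is the It\^o computation establishing \eqref{eq: D sde}: it is lengthy and every term must be matched against the definition of $H$ and of $(\pi^*,\tilde c^*)$, with signs tracked through the $(1-\gamma)<0$ and $\theta<0$ prefactors. A secondary technical point is justifying the manipulations — in particular that $\int_0^\cdot Z_s\,dW_s$ and $\int_0^\cdot (\pi^*_s)'\sigma_s\,dW^\rho_s$ are well-defined stochastic integrals so that the local-martingale part of $D^*$ genuinely is a local martingale; this follows from $\expec[\int_0^T |Z_s|^2\,ds]<\infty$ in Proposition \ref{prop: existence} together with local boundedness of the coefficients along the continuous path of $X$ (Assumption \ref{ass: coeff}), using a localization along the sequence $(E_n)$ if necessary. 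No deep idea is required beyond these verifications; the supermartingale conclusion is then immediate from nonnegativity.
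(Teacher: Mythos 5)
Your derivation of \eqref{eq: D sde} follows the same route as the paper: It\^o's formula on $(\mathcal{W}^*)^{-\gamma}$ and $e^Y$, the cross-variation from $dW^\rho$ against $dW$, and cancellation of the drift down to $-r_t$ using the first-order conditions and the identity $\theta+\gamma-1-\tfrac{\theta}{\psi}=0$. That part is fine.

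The second half contains a genuine error. You claim that for an arbitrary admissible $(\pi,c)$ the drift of $N=\mathcal{W}^{\pi,c}D^*+\int_0^\cdot D^*_sc_s\,ds$ is a strictly nonpositive residual of the form $-\tfrac{\gamma}{2}(\pi-\pi^*)'\Sigma(\pi-\pi^*)\,\mathcal{W}D^*\cdot\mathrm{const}$, vanishing only at $\pi=\pi^*$. This is not what happens: $\mathcal{W}D^*$ is \emph{linear} in $\mathcal{W}$, hence linear in $\pi$, so no quadratic term in $\pi$ can survive. Carrying out the product rule, the drift of $\mathcal{W}D^*$ is $-r\mathcal{W}D^*+D^*\bra{\mathcal{W}(r+\pi'\mu)-c}+\mathcal{W}D^*\pare{-\gamma\pi'\Sigma\pi^*+\pi'\sigma\rho Z'}$, and since $\gamma\Sigma\pi^*=\mu+\sigma\rho Z'$ by \eqref{eq: opt st}, the last bracket equals $-\pi'\mu$, so everything cancels except $-D^*c$, which is absorbed by $\int_0^\cdot D^*_sc_s\,ds$. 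Thus $N$ is a local martingale for \emph{every} admissible strategy, which is exactly what the lemma asserts (``nonnegative local martingale, hence a supermartingale''); your argument delivers only the weaker ``local supermartingale'' conclusion and wrongly asserts that the local-martingale property is special to the optimum. The quadratic residual you have in mind belongs to the \emph{other} verification argument, namely the drift \eqref{eq: R drift} of $\tfrac{\mathcal{W}^{1-\gamma}}{1-\gamma}e^Y+\int_0^\cdot f(c_s,\cdot)\,ds$, where the power $1-\gamma$ makes the drift genuinely quadratic in $\pi$ and maximized at $\pi^*$; you have transplanted that feature into the deflator computation, where it does not occur. The fix is simply to do the linear computation above and conclude local-martingality directly; nonnegativity then gives the supermartingale property as you say.
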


Finally our second main result below confirms that $\mathcal{W}^* D^* + \int_0^\cdot D^*_s c^*_s \, ds$ is in fact a martingale. This result has been proved for recursive utilities with Lipschitz continuous aggregator which is also jointly concave in all its variables, cf. \cite[Theorems 4.2 and 4.3]{ElKaroui-Peng-Quenez-AAP}. However, as we have seen before, none of these conditions are satisfied when $\gamma,\psi>1$.

\begin{thm}\label{thm: state price}
 When $\gamma, \psi>1$, let Assumptions \ref{ass: coeff}, \ref{ass: op bsde}, \ref{ass: phi}, and \ref{ass: Q^0} hold. Then, for optimal strategy $(\pi^*, c^*)$ given in \eqref{eq: opt st}, $\mathcal{W}^* D^* + \int_0^\cdot D^*_s c^*_s \,ds$ is a martingale. Therefore, for any admissible strategy $(\pi, c)$,
 \[
  \expec\bra{\mathcal{W}^{\pi, c}_T D^*_T + \int_0^T D^*_s c_s \, ds} \leq w = \expec\bra{\mathcal{W}^{\pi^*, c^*}_T D^*_T + \int_0^T D^*_s c^*_s \, ds}.
 \]
\end{thm}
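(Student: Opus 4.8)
The plan is to promote the nonnegative local martingale supplied by Lemma~\ref{lem: deflator} to a genuine martingale, the only missing ingredient being a uniform integrability estimate that combines permissibility of the optimal strategy with the one–sided bound on $Y$ from Proposition~\ref{prop: existence}. Set $N_t := \mathcal{W}^*_t D^*_t + \int_0^t D^*_s c^*_s\,ds$. By Lemma~\ref{lem: deflator}, applied to the admissible pair $(\pi^*,c^*)$, $N$ is a nonnegative local martingale with $N_0=w$; hence it is a supermartingale, so $\expec[N_T]\le w$ and, for every admissible $(\pi,c)$, $\expec[\mathcal{W}^{\pi,c}_T D^*_T+\int_0^T D^*_s c_s\,ds]\le w$. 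Thus it suffices to establish $\expec[N_T]=w$, since a nonnegative supermartingale on $[0,T]$ with $\expec[N_T]=\expec[N_0]$ is automatically a martingale, which then yields the displayed equality for $(\pi^*,c^*)$.

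To prove $\expec[N_T]=w$ I would first derive a pathwise bound on $N$ from the explicit representation \eqref{eq: state price tran}. With $\Gamma_t := \exp\!\big(\int_0^t (\theta-1)\delta^{\psi} e^{-\frac{\psi}{\theta}Y_s}\,ds-\delta\theta t\big)$ and $k := w^{-\gamma}e^{Y_0}>0$, identity \eqref{eq: state price tran} reads $\mathcal{W}^*_t D^*_t = k^{-1}\Gamma_t (\mathcal{W}^*_t)^{1-\gamma}e^{Y_t}$, and \eqref{eq: opt st} then gives $D^*_t c^*_t = \delta^{\psi}e^{-\frac{\psi}{\theta}Y_t}\,\mathcal{W}^*_t D^*_t$. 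Since $\gamma,\psi>1$ forces $\theta<0$, we have $-\psi/\theta>0$, so $\delta^{\psi}e^{-\frac{\psi}{\theta}Y_t}\ge 0$ and, because $Y_t\le (h_{\max}-\delta\theta)T$ by Proposition~\ref{prop: existence}, it is bounded above by a constant; consequently the exponent defining $\Gamma_t$ is bounded and $\Gamma_t$ lies between two positive constants. Collecting these observations, there is a constant $C>0$ with
\[
N_t \;\le\; C\Big[(\mathcal{W}^*_t)^{1-\gamma}e^{Y_t} + \int_0^T (\mathcal{W}^*_s)^{1-\gamma}e^{Y_s}\,ds\Big],\qquad t\in[0,T].
\]

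Next I would invoke permissibility of $(\pi^*,c^*)$, established in the proof of Theorem~\ref{thm: verification}: the process $(\mathcal{W}^*)^{1-\gamma}e^{Y}$ is of class $D$. In particular $\sup_{t\le T}\expec[(\mathcal{W}^*_t)^{1-\gamma}e^{Y_t}]<\infty$, so by Tonelli $\int_0^T (\mathcal{W}^*_s)^{1-\gamma}e^{Y_s}\,ds$ is integrable, while $\{(\mathcal{W}^*_\tau)^{1-\gamma}e^{Y_\tau}:\tau\le T\text{ a stopping time}\}$ is uniformly integrable. By the bound above, $0\le N_\tau\le C(\mathcal{W}^*_\tau)^{1-\gamma}e^{Y_\tau}+C\int_0^T(\mathcal{W}^*_s)^{1-\gamma}e^{Y_s}\,ds$, whose right-hand side is the sum of a uniformly integrable family of random variables and a fixed integrable random variable; hence $\{N_\tau:\tau\le T\}$ is uniformly integrable, i.e.\ $N$ is of class $D$. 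A local martingale of class $D$ on $[0,T]$ is a true martingale, so in particular $\expec[N_T]=N_0=w$, completing the argument.

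The essential difficulty is precisely this passage from local to true martingale as $t\uparrow T$, and the decisive structural fact is that the upper bound on $Y$ — itself a consequence of $\gamma,\psi>1$ — renders the multiplicative factors $\Gamma_t$ and $\delta^{\psi}e^{-\frac{\psi}{\theta}Y_t}$ appearing in $\mathcal{W}^*_t D^*_t$ and $D^*_t c^*_t$ bounded, which is what lets one reduce the needed uniform integrability to the class-$D$ property already packaged into permissibility. In the absence of a one-sided bound on $Y$ this reduction breaks down, which is why the result here cannot simply be quoted from the setting of Lipschitz, jointly concave aggregators treated in \cite{ElKaroui-Peng-Quenez-AAP}.
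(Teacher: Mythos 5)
Your proof is correct and follows essentially the same route as the paper: reduce to showing that the nonnegative local martingale from Lemma~\ref{lem: deflator} is of class $D$, using the decomposition \eqref{eq: WD+int} together with the upper bound on $Y$ and the class-$D$ property of $(\mathcal{W}^*)^{1-\gamma}e^{Y}$ from Corollary~\ref{cor: opt class D}. The only (harmless) difference is in how you dominate the running-integral term: you bound $\delta^\psi e^{-\frac{\psi}{\theta}Y}$ by a constant and apply Tonelli with the $L^1$-bound implied by class $D$, whereas the paper extracts the same integrability from the finiteness of the aggregator integral in \eqref{eq: V* resp}; both are valid.
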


In an equilibrium setting where the representative agent has an Epstein-Zin utility, given the consumption stream, equilibrium risk-free rate and risk premium can be read out from $D^*$, providing a framework to study various asset pricing puzzles as discussed in introduction.

\section{Examples}\label{sec: application}
This section specifies general results in the previous section to two extensively studied models, where explicit parameter restrictions are presented so that all assumptions in the previous section are satisfied, hence statements of Theorems \ref{thm: verification} and \ref{thm: state price} hold. These parameter restrictions covers many empirically relevant specifications.

\subsection{Stochastic volatility}\label{subsec: stoch vol}
The following model has a $1-$dimensional state variable, following a square-root process as suggested by Heston, which simultaneously affects the interest rate, the excess return of risky assets and their volatility. This model has been studied by \cite{Chacko-Viceira} for recursive utilities with unit EIS, and \cite{Kraft}, \cite{Liu} for the time separable utilities. This model is specified as follows:
\begin{equation}\label{eq: Heston}
\left\{\begin{array}{l}
 dS_t = \text{diag}(S_t) \bra{\pare{r(X_t) 1_n + \mu(X_t)} dt + \sqrt{X_t} \sigma dW^\rho},\\
 dX_t = b(\ell - X_t) dt + a \sqrt{X_t} dW_t,
\end{array}\right.
\end{equation}
where $r(x)= r_0 + r_1 x$, $\mu(x) = \sigma \lambda x$, with $r_0, r_1\in \Real$, $\sigma\in \Real^{n\times n}, \lambda, \rho \in \Real^n$, and $b, \ell, a\in \Real$. These parameters satisfy
\begin{ass}\label{ass: Heston para}
 $b, \ell, r_1 + \frac{1}{2\gamma} \lambda' \sigma' \Sigma^{-1} \sigma \lambda \geq 0$, $a>0$, and $b\ell >\frac12 a^2$.
\end{ass}
The previous assumption ensures that $X$ takes value in $(0,\infty)$ and $r+ \frac{1}{2\gamma} \mu' \Sigma^{-1} \mu$ is bounded from below, hence Assumption \ref{ass: coeff} is satisfied with $E=(0,\infty)$. The following result provides parameter restrictions such that statements of Theorems \ref{thm: verification} and \ref{thm: state price} hold.

\begin{prop}\label{prop: Heston}
 When $\gamma, \psi>1$, let Assumption \ref{ass: Heston para} and the following parameter restrictions hold:
 \begin{enumerate}
 \item[i)] Either $r_1 >0$ or $\lambda' \sigma' \Sigma^{-1} \sigma \lambda >0$;
 \item[ii)] $(\psi-1) \bra{r_1 + \frac{b \lambda' \rho}{a} + \frac12 \lambda'(\psi 1_{n\times n} - (\psi-1) \rho \rho')\lambda} < \frac{b^2}{2a^2}$.
 \end{enumerate}
 Then statements of Theorems \ref{thm: verification} and \ref{thm: state price} hold.
\end{prop}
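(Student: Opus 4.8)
The plan is to verify that the four general hypotheses --- Assumptions \ref{ass: coeff}, \ref{ass: op bsde}, \ref{ass: phi}, and \ref{ass: Q^0} --- all hold for the Heston model \eqref{eq: Heston} under Assumption \ref{ass: Heston para} together with conditions i) and ii), and then invoke Theorems \ref{thm: verification} and \ref{thm: state price}. As noted in the excerpt, Assumption \ref{ass: Heston para} already gives Assumption \ref{ass: coeff} with $E=(0,\infty)$: the square-root diffusion is locally Lipschitz away from $0$, the Feller condition $b\ell>\tfrac12 a^2$ keeps $X$ inside $(0,\infty)$ for all finite times, and $r+\tfrac1{2\gamma}\mu'\Sigma^{-1}\mu = r_0 + (r_1 + \tfrac1{2\gamma}\lambda'\sigma'\Sigma^{-1}\sigma\lambda)x$ is affine with nonnegative slope, hence bounded below. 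So the work is in the other three assumptions.

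For Assumption \ref{ass: op bsde}, I would compute the relevant coefficients explicitly. Since $\mu=\sigma\lambda x$, $\Sigma=\sigma\sigma' x$, one gets $\tfrac{1-\gamma}{\gamma}\mu'\Sigma^{-1}\sigma\rho(X_s) = \tfrac{1-\gamma}{\gamma}\lambda'\rho\,a\sqrt{X_s}$ times the appropriate row structure, so the Girsanov density $\mathcal{E}(\int \tfrac{1-\gamma}{\gamma}\mu'\Sigma^{-1}\sigma\rho(X_s)\,dW_s)_T$ is the stochastic exponential of a constant multiple of $\int a\sqrt{X_s}\,\rho'dW_s$; under Assumption \ref{ass: Heston para} this is a true martingale because the perturbed drift $b(\ell-x) + \text{const}\cdot a\sqrt{x}\cdot a\sqrt{x} = (b - \text{const}\,a^2)(\ell' - x)$ is still affine and the resulting process is non-explosive --- this is a standard explosion-criterion / linear-growth argument for affine diffusions, and I expect the Feller-type condition to be inherited (possibly this is where condition i) or a piece of ii) enters to keep the new process from hitting $0$). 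Then $h(X_s) = (1-\gamma)r_0 + [(1-\gamma)r_1 + \tfrac{1-\gamma}{2\gamma}\lambda'\sigma'\Sigma^{-1}\sigma\lambda\,]X_s$ is affine in $X$, and $\expec^{\overline\prob}[\int_0^T h(X_s)\,ds]>-\infty$ follows from $\sup_{t\le T}\expec^{\overline\prob}[X_t]<\infty$, again immediate for an affine diffusion with affine drift.

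Assumption \ref{ass: phi} is where I expect the real work, and where condition ii) is decisive. I would look for a Lyapunov function of the form $\phi(x) = A x + B\log x + C\log\tfrac1x$ or, more simply, $\phi(x) = \kappa_1 x - \kappa_2 \log x$ on $E=(0,\infty)$, which blows up both as $x\to 0$ (via $-\log x$) and as $x\to\infty$ (via the linear term), so part i) holds for suitable positive $\kappa_1,\kappa_2$. Plugging into $\fF[\phi]$ from \eqref{eq: fF}: the generator of $X$ acting on $\kappa_1 x - \kappa_2\log x$ produces affine-in-$x$ terms plus a $1/x$ term with coefficient $\tfrac12 a^2\kappa_2$, the drift-perturbation term $\tfrac{1-\gamma}{\gamma}a\rho'\sigma'\Sigma^{-1}\mu = \text{const}\cdot x$ contributes linearly, the quadratic term $\tfrac12\nabla\phi' aMa'\nabla\phi$ contributes $\tfrac12 a^2 (\kappa_1 - \kappa_2/x)^2 \cdot (\text{entry of }M)$ which has an $x$, a constant, and a $1/x^2$ piece, and $h$ is affine. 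For $\fF[\phi]$ to be bounded above one needs the coefficient of $x$ to be $\le 0$ (a linear inequality in $\kappa_1$), the $1/x^2$ coefficient to vanish or be negative, and the net $1/x$ coefficient controllable --- matching these up and checking the system has a solution is exactly where the quadratic constraint in ii), $(\psi-1)[r_1 + \tfrac{b\lambda'\rho}{a} + \tfrac12\lambda'(\psi 1 - (\psi-1)\rho\rho')\lambda] < \tfrac{b^2}{2a^2}$, gets used; I'd expect the left side of ii) to emerge as the obstruction to making the $x$-coefficient nonpositive while keeping $\phi$ genuinely coercive at the origin. The main obstacle is thus an algebraic feasibility check on the Riccati-type system for $(\kappa_1,\kappa_2)$, which I would organize by first treating the $x\to\infty$ behavior (linear coefficient) and separately the $x\to 0$ behavior ($1/x$ and $1/x^2$ coefficients), then showing the two can be reconciled precisely under ii).

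Finally, for Assumption \ref{ass: Q^0}: here $\lambda$ is the given constant vector (scaled), $\mu=\sigma\lambda x$, and $d\qprob^0/d\prob = \mathcal{E}(\int -\lambda'\sqrt{X_s}\,dW^\rho_s)_T$; the integrability \eqref{eq: Q^0 int} becomes $\expec^{\qprob^0}[\exp((\psi-1)\int_0^T (r_0+r_1 X_s)_+ ds)\,\mathcal{E}(\int \lambda'\sqrt{X_s}\,dW^0_s)_T^\psi]<\infty$. Both factors are exponentials of affine functionals of the path of $X$ (after writing $\mathcal{E}(\cdots)^\psi$ as $\exp(\psi\int\lambda'\sqrt{X_s}dW^0_s - \tfrac\psi2\int|\lambda|^2 X_s ds)$ and completing the square to absorb the stochastic integral into a further measure change), so the whole expectation reduces to $\expec^{\wt\qprob}[\exp(\alpha\int_0^T X_s\,ds)]$ for an affine diffusion $X$ under some measure $\wt\qprob$ with affine drift; such moment generating functionals are finite on a maximal interval governed by a scalar Riccati ODE not blowing up before time $T$, and the condition for that (or a clean sufficient condition via a supersolution of the Riccati equation) is again exactly of the form ii). I would present this by reducing to a single exponential-moment claim, citing the standard affine-process result (or a direct Girsanov-plus-explosion argument), and checking ii) suffices; condition i), ensuring $r_1>0$ or $\lambda'\sigma'\Sigma^{-1}\sigma\lambda>0$, is what guarantees the relevant drift perturbation is non-degenerate so that the Lyapunov function in Assumption \ref{ass: phi} can be made strictly coercive rather than merely bounded.
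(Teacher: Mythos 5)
Your overall plan coincides with the paper's: verify Assumptions \ref{ass: op bsde}, \ref{ass: phi}, and \ref{ass: Q^0} (Assumption \ref{ass: coeff} being immediate from Assumption \ref{ass: Heston para}), using Girsanov/explosion criteria for the measure changes, a Lyapunov function of the form $\overline{c}\,x-\underline{c}\log x$ for Assumption \ref{ass: phi}, and a reduction of \eqref{eq: Q^0 int} to an exponential moment of $\int_0^T X_s\,ds$ handled by the explicit CIR Laplace transform (Lemma \ref{lem: int CIR}). However, two concrete points in your execution are off.

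First, your Lyapunov computation has an error: you claim the quadratic term $\tfrac12\nabla\phi'aMa'\nabla\phi$ produces a $1/x^2$ piece. It does not, because for the Heston model $a(x)^2=a^2x$, so this term is $\tfrac12 a^2 x\,\tilde M\,(\partial_x\phi)^2$ with $\partial_x\phi=\overline{c}-\underline{c}/x$, yielding only $x$, constant, and $1/x$ contributions. There is no $1/x^2$ coefficient to cancel, and the feasibility check is much simpler than the ``Riccati-type system'' you describe: the $1/x$ coefficient is $\tfrac12 a^2\underline{c}+\tfrac12 a^2\underline{c}^2\tilde M-b\ell\underline{c}$, negative for small $\underline{c}$ by the Feller condition $b\ell>\tfrac12 a^2$, and the $x$ coefficient tends to $(1-\gamma)\bigl(r_1+\tfrac{1}{2\gamma}\lambda'\Theta\lambda\bigr)$ as $\overline{c}\downarrow 0$, which is strictly negative precisely under condition i) (together with $\gamma>1$ and Assumption \ref{ass: Heston para}). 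Consequently your assertion that ``condition ii) is decisive'' for Assumption \ref{ass: phi} is wrong: only condition i) is used there. Condition ii) enters \emph{solely} through Assumption \ref{ass: Q^0}: after two changes of measure the integrability \eqref{eq: Q^0 int} reduces to $\expec^{\qprob^\psi}[\exp(q\int_0^T X_s\,ds)]<\infty$ with $q=(\psi-1)r_1+\tfrac12(\psi^2-\psi)\lambda'\lambda$ and mean-reversion speed $b-(\psi-1)a\lambda'\rho$ under $\qprob^\psi$, and the condition $q<\kappa^2/(2a^2)$ of Lemma \ref{lem: int CIR} is exactly ii). (You do gesture at this correctly in your last paragraph, but it contradicts your earlier claim about Assumption \ref{ass: phi}.) Second, for the martingale property in Assumption \ref{ass: op bsde} i) neither i) nor ii) is needed: the Girsanov drift perturbation $\tfrac{1-\gamma}{\gamma}a\lambda'\Theta\rho\,x$ is linear in $x$ and leaves the constant drift term $b\ell$ unchanged, so the Feller condition $b\ell>\tfrac12 a^2$ alone guarantees non-explosion of the auxiliary martingale problem and hence, via \cite[Remark 2.6]{Cheridito-Filipovic-Yor}, the martingale property.
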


In item i), either the interest rate or the excess rate of return has a linear growth component of the state variable. In item ii), the inequality asks either $b$, the mean-reverting speed of the state variable, is large, or the volatility $a$ is small, or $EIS$ is close to $1$.   In particular, when $r_1 =0$ (i.e., constant interest rate) and $\psi>1$, the condition in item ii) is satisfied when
\begin{equation}\label{eq: Heston sufficient}
 b \lambda' \rho \leq -\frac12 \psi a \lambda' \lambda. \footnote{Since $\psi>1$, \eqref{eq: Heston sufficient} yields $\frac{b \lambda' \rho}{a} + \frac12 \psi \lambda'(\psi1_{n\times n} - (\psi-1) \rho \rho') \lambda\leq \frac{b \lambda' \rho}{a}+ \frac12 \psi \lambda' \lambda\leq 0$. Hence the left hand side of the inequality in Proposition \ref{prop: Heston} ii) is negative.}
\end{equation}
This condition covers the empirically relevant specification in \cite{Liu-Pan}, where the parameter values are
\begin{equation}\label{eq: Heston para}
 \lambda=0.47, \quad \sigma =1, \quad b=5, \quad a=0.25, \quad \text{and} \quad \rho =-0.5.
\end{equation}
Taking $\psi=1.5$ from \cite{Bansal-Yaron}, \eqref{eq: Heston sufficient} is verified by calculation.

Figure \ref{fig: Heston} demonstrates the optimal consumption wealth ratio $c^*/\mathcal{W}^*$ and optimal investment fraction $\pi^*$ with respect to volatility $\sqrt{X}$ for different values of risk aversion and EIS. Meanwhile, our numeric results show that EIS has little impact on the optimal investment fraction, and different risk aversions hardly change the optimal consumption wealth ratio. Figure \ref{fig: horizon} compares the optimal consumption wealth ratio for $\psi=0.2$ (top panel) and $\psi=1.5$ (bottom panel). When $\psi=0.2$, the finite horizon optimal consumption wealth ratio converges quickly to its infinite horizon stationary limit. For the parameter specification in \eqref{eq: Heston para}, when the horizon is longer than $20$ years, the time-$0$ optimal consumption strategy is already close to its stationary limit. However, this convergence is much slower when $\psi=1.5$, requiring at least $60$ years when the time discounting parameter $\delta=0.08$. Moreover, in contrast to the $\psi=0.2$ case, the convergence speed is sensitive to $\delta$ when $\psi=1.5$. In this case, the convergence is much slower for smaller value of $\delta$. Intuitively, agent with small discounting parameter is more patient. But she still prefers early consumption when $\psi>1$. Therefore these two competing forces delay the convergence. All comparative statistics is produced by solving the partial differential equation counterpart of \eqref{eq: op bsde} numerically using finite difference methods.

\begin{figure}
\centering
\subfigure{
\includegraphics[scale=0.53]{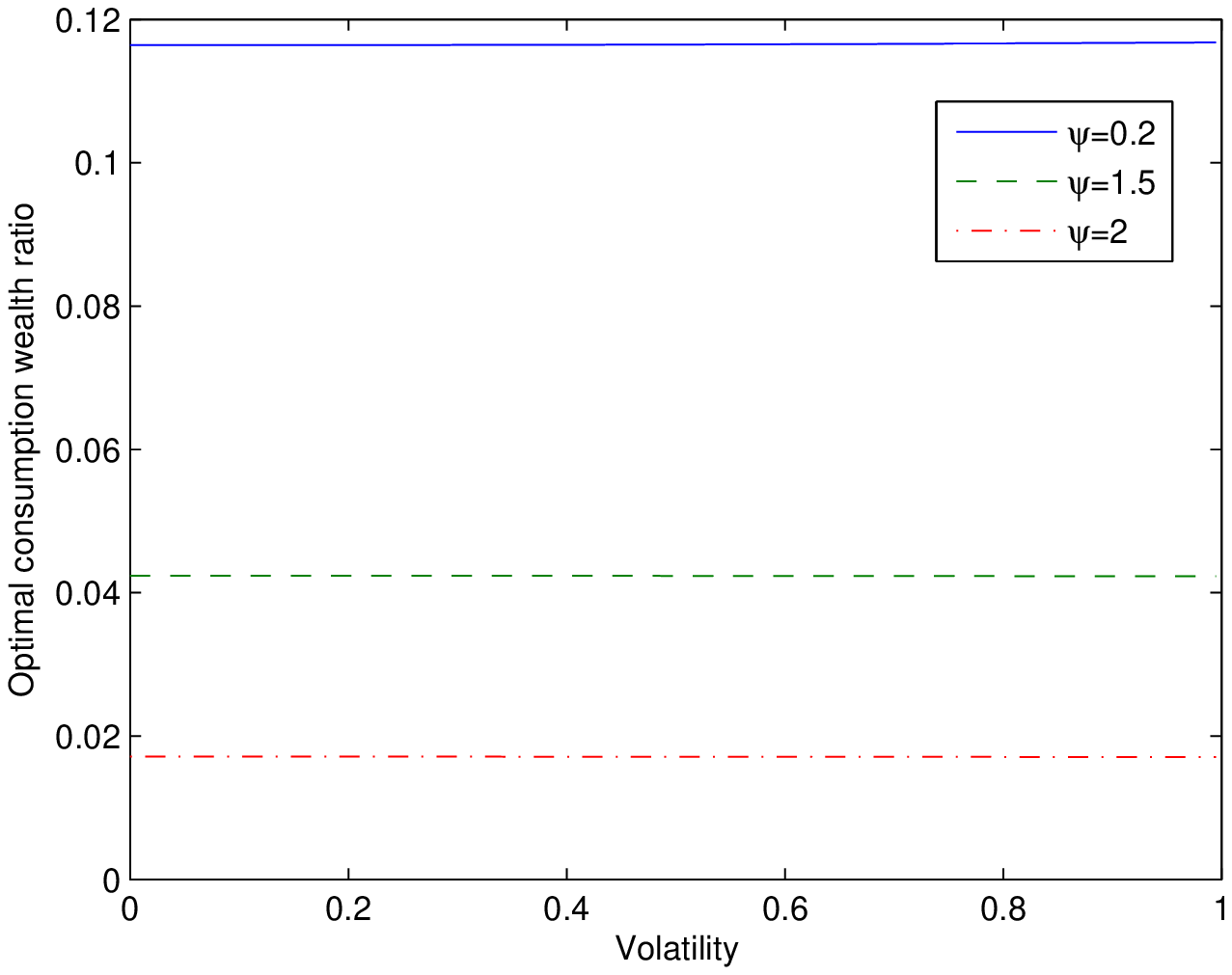}
}
\subfigure{
\includegraphics[scale=0.53]{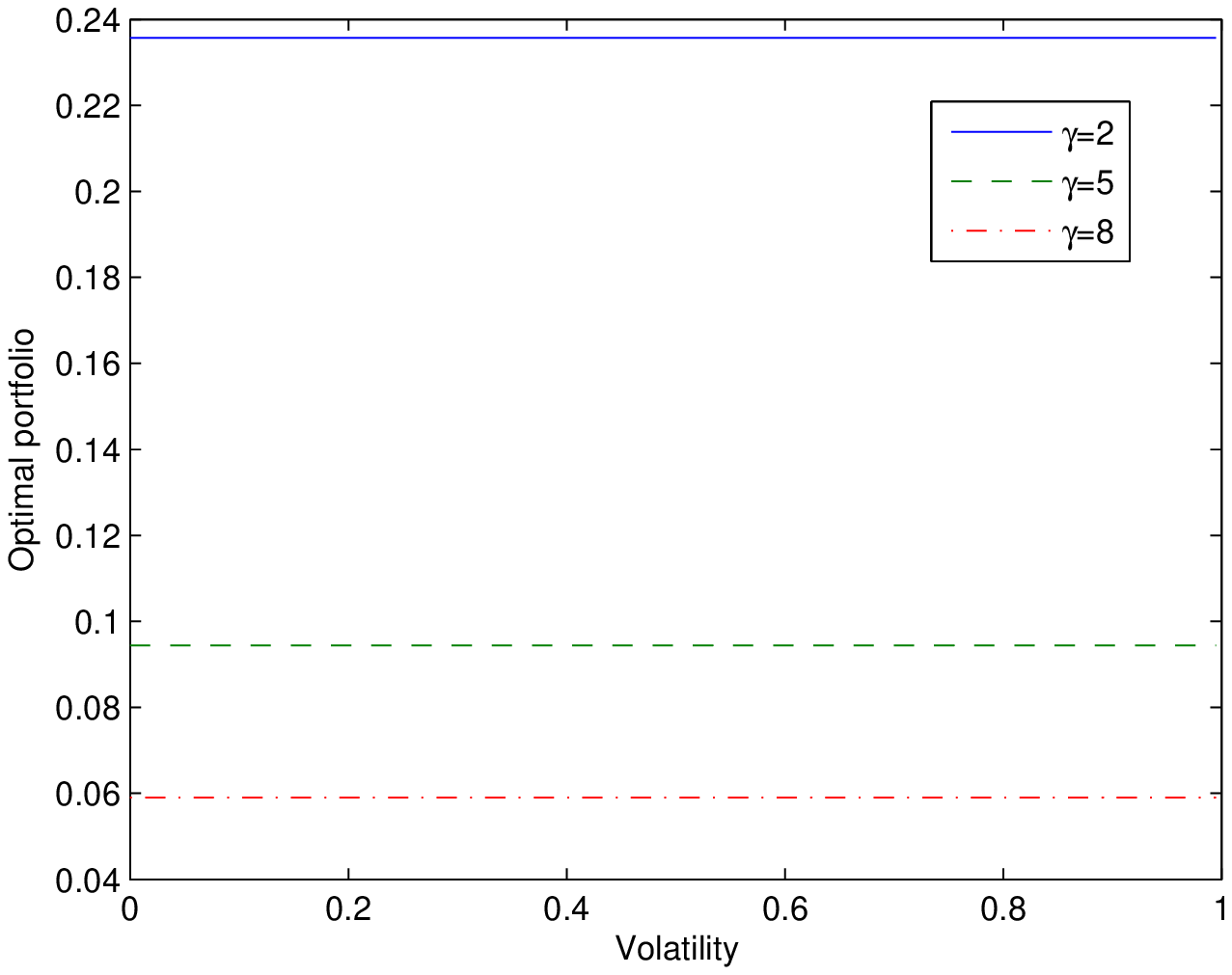}
}
\caption{Both figures use parameters in \eqref{eq: Heston para}, and $r=0.05$, $\delta=0.08$, $\ell=0.0225$. They are both time $0$ values for a problem with time horizon $T=10$ years. The left panel takes $\gamma=5$, and the right panel uses $\psi=1.5$.}
\label{fig: Heston}
\end{figure}

\begin{figure}[h]
\centering
\subfigure{
\includegraphics[scale=0.5]{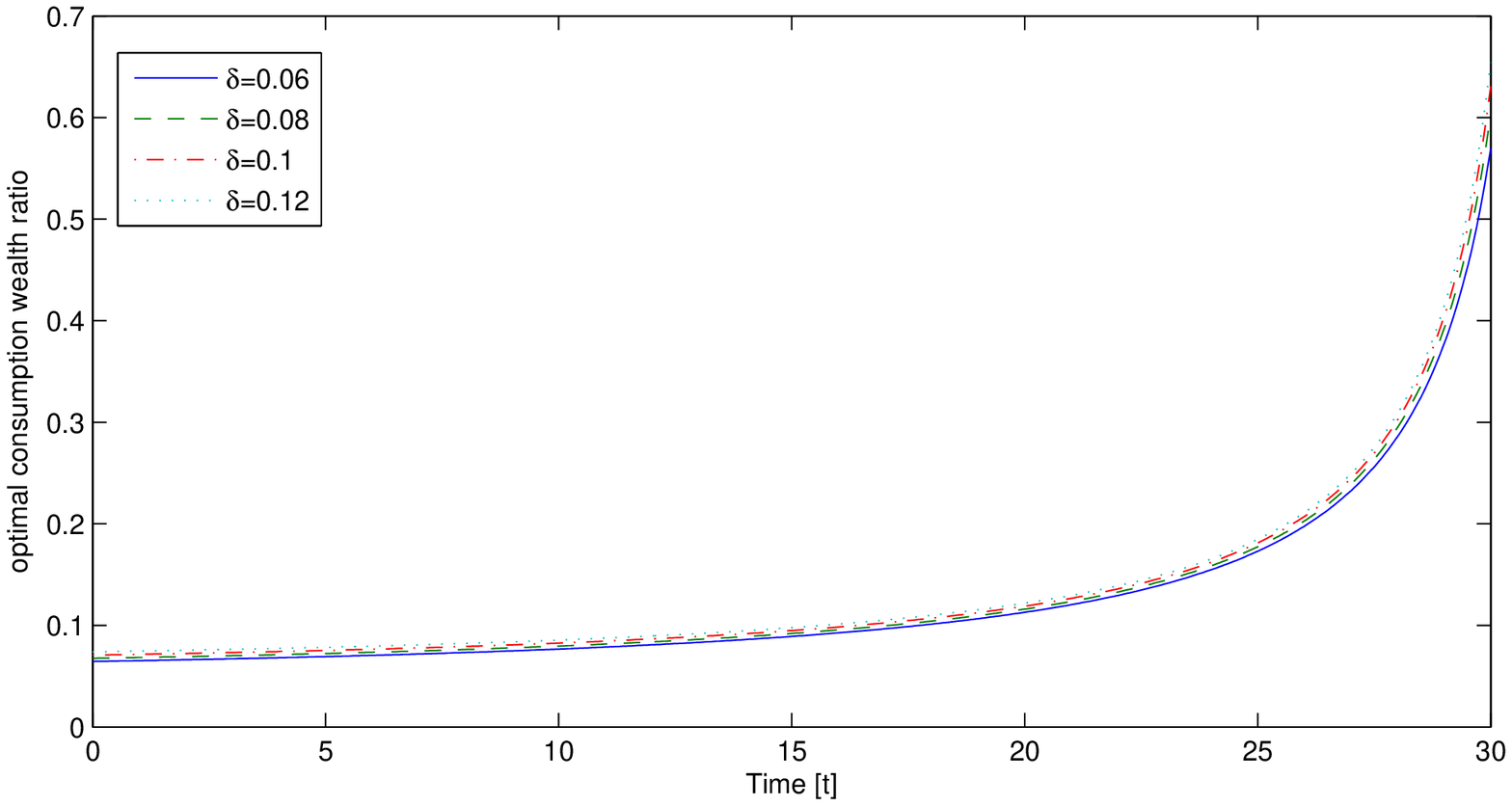}
}
\subfigure{
\includegraphics[scale=0.5]{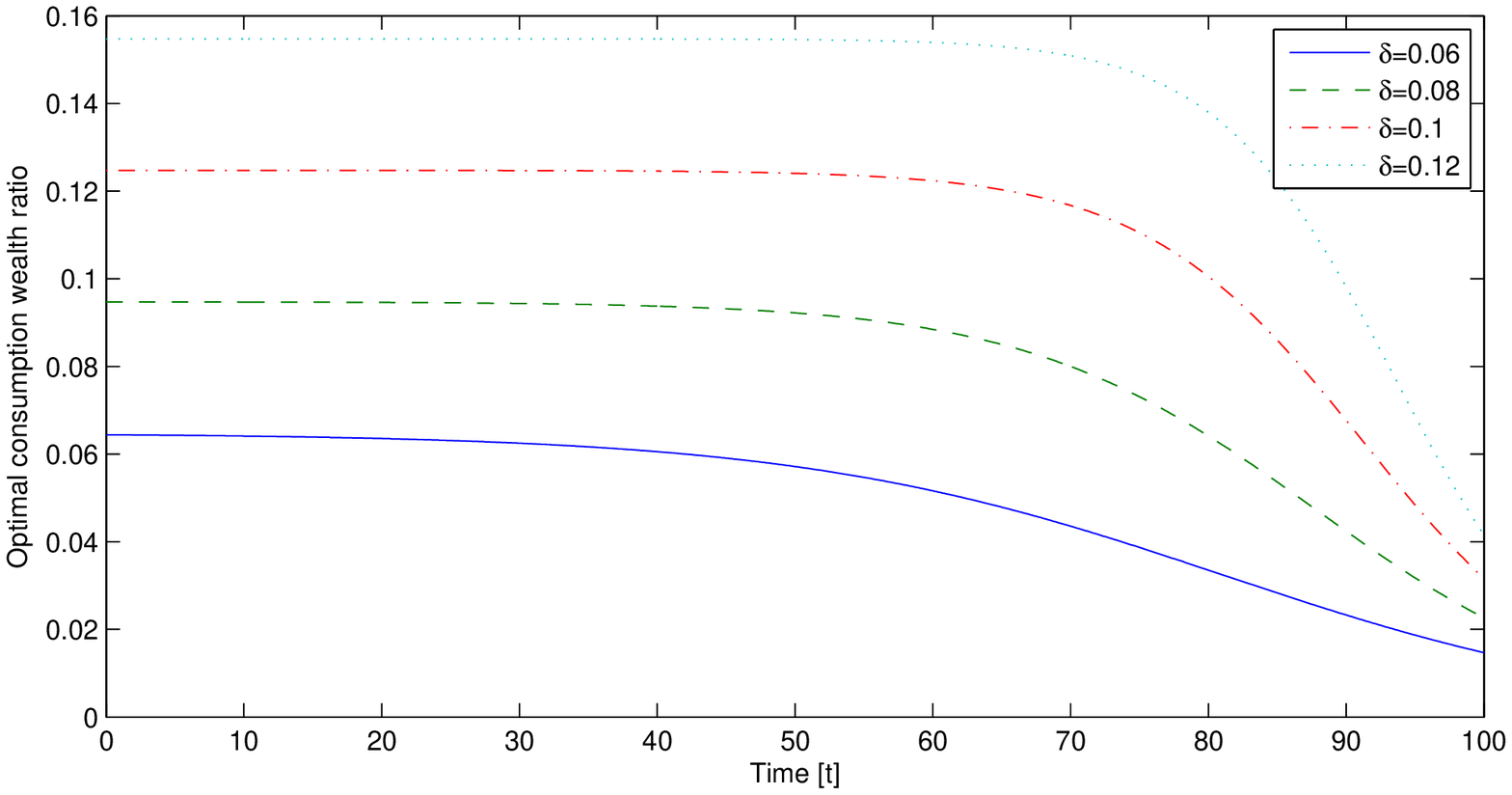}
}
\caption{Optimal consumption wealth ratio as a function of time when volatility is $20\%$. Both figures use parameters in \eqref{eq: Heston para}, $r=0.05$, $\ell=0.0225$, and $\gamma=5$. The upper panel takes $\psi=0.2$ and $T=30$ years. The lower panel fixes $\psi=1.5$ and $T=100$ years.}
\label{fig: horizon}
\end{figure}

\subsection{Linear diffusion}\label{subsec: linear diff}
Both the interest rate and the excess return of risky assets in the following model are linear functions of a state variable, which follows a $1-$dimensional Ornstein-Uhlenbeck process. This model has been studied in \cite{Kim-Omberg} and \cite{Wachter-port} for the time separable utility setting, and in \cite{Campbell-Viceira} for recursive utilities in a discrete time setting. The model dynamics is given by
\begin{equation}\label{eq: OU}
\left\{\begin{array}{l}
 dS_t = \text{diag}(S_t) \bra{\pare{r(X_t) 1_n + \mu(X_t)}} dt + \sigma dW^\rho_t,\\
 dX_t = -bX_t dt + a dW_t,
\end{array}
\right.
\end{equation}
where $r(x)= r_0+ r_1 x$, $\mu(x) = \sigma(\lambda_0+ \lambda_1 x)$, with $r_0, r_1\in \Real, \lambda_0,\lambda_1 \in \Real^n, \sigma\in \Real^{n\times n}, b,a\in \Real$, and $\rho \in \Real^{n}$.  These coefficients satisfy
\begin{ass}\label{ass: OU para}
 $a, b>0$, either $r_1 =0$ or $\lambda_1' \sigma' \Sigma^{-1} \sigma \lambda_1 >0$.
\end{ass}
This assumption implies that Assumption \ref{ass: coeff} is satisfied with $E=\Real$. Under following parameter restrictions, statements of Theorems \ref{thm: verification} and \ref{thm: state price} hold.

\begin{prop}\label{prop: OU}
 When $\gamma, \psi>1$, let Assumption \ref{ass: OU para} and the following parameter restrictions hold:
 \begin{enumerate}
 \item[i)] Either $-b+ \frac{1-\gamma}{\gamma} a\lambda'_1  \sigma' \Sigma^{-1}\sigma \rho<0$ or $\lambda_1' \sigma' \Sigma^{-1}\sigma \lambda_1>0$;
 \item[ii)] $(\psi-1)\bra{\frac{b\lambda'_1 \rho}{a} + \frac12 \lambda'_1 (\psi 1_{n\times n} - (\psi-1)\rho \rho') \lambda_1}< \frac{b^2}{2 a^2}$.
 \end{enumerate}
 Then statements of Theorems \ref{thm: verification} and \ref{thm: state price} hold.
\end{prop}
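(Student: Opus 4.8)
The plan is to verify, with $E=\Real$, that Assumption \ref{ass: OU para} together with restrictions i)--ii) implies the four structural hypotheses --- Assumptions \ref{ass: coeff}, \ref{ass: op bsde}, \ref{ass: phi} and \ref{ass: Q^0} --- behind Theorems \ref{thm: verification} and \ref{thm: state price}. Since $\Sigma=\sigma\sigma'$ is positive definite, $\sigma$ is invertible and $\sigma'\Sigma^{-1}\sigma=1_{n\times n}$; hence $\mu'\Sigma^{-1}\mu=|\lambda_0+\lambda_1x|^2$, $\mu'\Sigma^{-1}\sigma\rho=(\lambda_0+\lambda_1x)'\rho$, $h(x)=(1-\gamma)(r_0+r_1x)+\tfrac{1-\gamma}{2\gamma}|\lambda_0+\lambda_1x|^2$ and $M=1+\tfrac{1-\gamma}{\gamma}|\rho|^2$, and since $|\rho|^2\le1$ and $\gamma>1$ one has $M\ge 1/\gamma>0$, while $1-\gamma<0$ makes $h$ a concave quadratic, so $h_{\max}$ exists. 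Assumption \ref{ass: coeff} is then immediate: the coefficients are affine (hence locally Lipschitz), $A=a^2>0$, $\Sigma$ is positive definite, the Ornstein--Uhlenbeck process is nonexplosive, and $r+\tfrac1{2\gamma}\mu'\Sigma^{-1}\mu$ is a convex quadratic in $x$ (bounded below; if $\lambda_1=0$, Assumption \ref{ass: OU para} forces $r_1=0$, so it is even constant). For Assumption \ref{ass: op bsde}, the Girsanov kernel $\tfrac{1-\gamma}{\gamma}\mu'\Sigma^{-1}\sigma\rho(X_s)$ is affine in the state, the induced drift has linear growth, and the candidate $\overline{\prob}$-dynamics of $X$ is again a nonexplosive linear SDE $dX_t=(-\tilde bX_t+c_0)\,dt+a\,dW^{\overline{\prob}}_t$ with $\tilde b:=b-\tfrac{1-\gamma}{\gamma}a\rho'\lambda_1$ and $c_0:=\tfrac{1-\gamma}{\gamma}a\rho'\lambda_0$; by the usual non-explosion criterion the stochastic exponential is a true martingale, so part i) holds, and since $X_s$ is then Gaussian with moments bounded on $[0,T]$ and $h$ is quadratic, $-\infty<\expec^{\overline{\prob}}[\int_0^Th(X_s)\,ds]\le h_{\max}T$, which is part ii).

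For Assumption \ref{ass: phi} I would take the explicit quadratic Lyapunov function $\phi(x)=px^2$, $p>0$, which satisfies the exhaustion requirement; a direct computation gives $\fF[\phi]=a^2p+2p(-\tilde bx+c_0)x+2a^2Mp^2x^2+h(x)$, a quadratic in $x$ with leading coefficient $2a^2Mp^2-2\tilde bp+\tfrac{1-\gamma}{2\gamma}|\lambda_1|^2$. When $|\lambda_1|>0$ this tends to $\tfrac{1-\gamma}{2\gamma}|\lambda_1|^2<0$ as $p\downarrow0$, hence is negative for small $p$; when $\lambda_1=0$, restriction i) reduces to $\tilde b=b>0$ and the coefficient $2p(a^2Mp-\tilde b)$ is negative for $0<p<\tilde b/(a^2M)$. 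In either case $\fF[\phi]$ is a downward parabola, hence bounded above, so Assumption \ref{ass: phi} holds. (Restriction i) enters only here and is automatic once $\lambda_1\neq0$.)

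The substantive step is Assumption \ref{ass: Q^0}. Take $\lambda(x)=\lambda_0+\lambda_1x$, so $\mu=\sigma\lambda$; as above $\mathcal{E}(\int-\lambda'_s\,dW^\rho_s)$ is a true martingale (linear-growth Girsanov drift), so $\qprob^0$ is well-defined, $X$ is Ornstein--Uhlenbeck under $\qprob^0$, and $W^0=W^\rho+\int\lambda_s\,ds$ is a $\qprob^0$-Brownian motion. Using $\mathcal{E}(\int\lambda'(X_s)\,dW^0_s)_T^\psi=\mathcal{E}(\int\psi\lambda'(X_s)\,dW^0_s)_T\exp(\tfrac{\psi(\psi-1)}{2}\int_0^T|\lambda(X_s)|^2\,ds)$ and a second (again linear-growth) change of measure $\hat\qprob$ with density $\mathcal{E}(\int\psi\lambda'(X_s)\,dW^0_s)_T$, under which $X$ solves $dX_t=(-\hat bX_t+(\psi-1)a\rho'\lambda_0)\,dt+a\,dW^{\hat\qprob}_t$ with $\hat b:=b-(\psi-1)a\rho'\lambda_1$, the quantity in \eqref{eq: Q^0 int} equals $\expec^{\hat\qprob}[\exp(\int_0^T((\psi-1)r_+(X_s)+\tfrac{\psi(\psi-1)}{2}|\lambda(X_s)|^2)\,ds)]$. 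Since $r_+$ and the cross terms grow at most linearly in $X_s$, Young's inequality absorbs them into an arbitrarily small enlargement of the $X_s^2$-coefficient $\alpha:=\tfrac{\psi(\psi-1)}{2}|\lambda_1|^2$ plus a bounded term, so finiteness reduces to $\expec^{\hat\qprob}[\exp(\alpha'\int_0^TX_s^2\,ds)]<\infty$ for some $\alpha'$ slightly above $\alpha$. For an Ornstein--Uhlenbeck process with mean-reversion speed $\hat b>0$ and volatility $a$ this holds for every $T>0$ precisely when the coefficient is below $\hat b^2/(2a^2)$ (read off the scalar Riccati equation governing the quadratic-functional moment generating function). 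Now restriction ii), after expanding $\hat b^2=(b-(\psi-1)a\rho'\lambda_1)^2$ and dividing by $2a^2$, is exactly $\alpha<\hat b^2/(2a^2)$; moreover, because $(\rho'\lambda_1)^2\le|\rho|^2|\lambda_1|^2\le|\lambda_1|^2$ and $b>0$, restriction ii) forces $\hat b>0$. Hence \eqref{eq: Q^0 int} holds, completing the verification.

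The main obstacle I expect is this last step: organizing the two successive measure changes, checking that the intermediate stochastic exponentials are genuine (not merely local) martingales despite the unbounded, state-linear market price of risk, and pinning down the sharp constant $\hat b^2/(2a^2)$ for the exponential integrability of $\int_0^TX_s^2\,ds$ under the twisted Ornstein--Uhlenbeck dynamics --- together with the observation that restriction ii) itself forces that dynamics to be mean-reverting --- so that the integrability criterion matches restriction ii) exactly and holds uniformly in the horizon $T$. The remaining verifications are routine linear-SDE and Gaussian computations.
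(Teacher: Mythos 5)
Your proposal is correct and follows essentially the same route as the paper: the same quadratic Lyapunov function $\phi(x)=px^2$ for Assumption \ref{ass: phi}, the same successive Girsanov changes of measure reducing \eqref{eq: Q^0 int} to an exponential moment of $\int_0^T X_s^2\,ds$ under a twisted Ornstein--Uhlenbeck law, and the same threshold $\pare{b-(\psi-1)a\lambda_1'\rho}^2/(2a^2)$, which upon expansion is exactly restriction ii). The only cosmetic difference is the last step, where the paper makes one further change of measure to center the OU process, squares it into a CIR process and invokes the quoted Laplace-transform result (Lemma \ref{lem: int CIR}), whereas you read the same threshold directly off the Riccati equation for the non-centered process; your explicit observation that restriction ii) forces $b-(\psi-1)a\lambda_1'\rho>0$ is a point the paper leaves implicit but which is needed for the integrability criterion to hold for all horizons.
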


In the above item i), observe that $(-b+ \frac{1-\gamma}{\gamma} a\lambda'_1  \sigma' \Sigma^{-1}\sigma \rho) X$ is the drift of $X$ under $\overline{\prob}$. Therefore item i) assumes that either $X$ is mean-reverting under $\overline{\prob}$ or the excess rate of return has a linear growth component of the state variable. Item ii) is interpreted similarly as Proposition \ref{prop: Heston} ii) . In particular, when $\psi>1$, the inequality in item ii) is satisfied when
\begin{equation}\label{eq: OU sufficient}
 b \lambda'_1 \rho \leq - \frac12 \psi a \lambda'_1 \lambda_1.\footnote{
 The proof is the same as in footnote 7.}
\end{equation}
This condition already covers many empirically relevant specifications. For example, in \cite{Barberis} and \cite{Wachter-port}, a single risky asset was considered and parameter values (in monthly units) are:
\begin{equation}\label{eq: OU para}
 \lambda_1 = 1, \quad \sigma = 0.0436, \quad b=0.0226, \quad a=0.0189, \quad \rho=-0.935, \quad \text{ and } \quad \psi=1.5.
\end{equation}
Figure \ref{fig: OU} demonstrates the optimal consumption wealth ratio $c^*/\mathcal{W}^{\pi^*}$ and optimal investment fraction $\pi^*$ with respect to the state variable $X$.

\begin{figure}
\centering
\subfigure{
\includegraphics[scale=0.53]{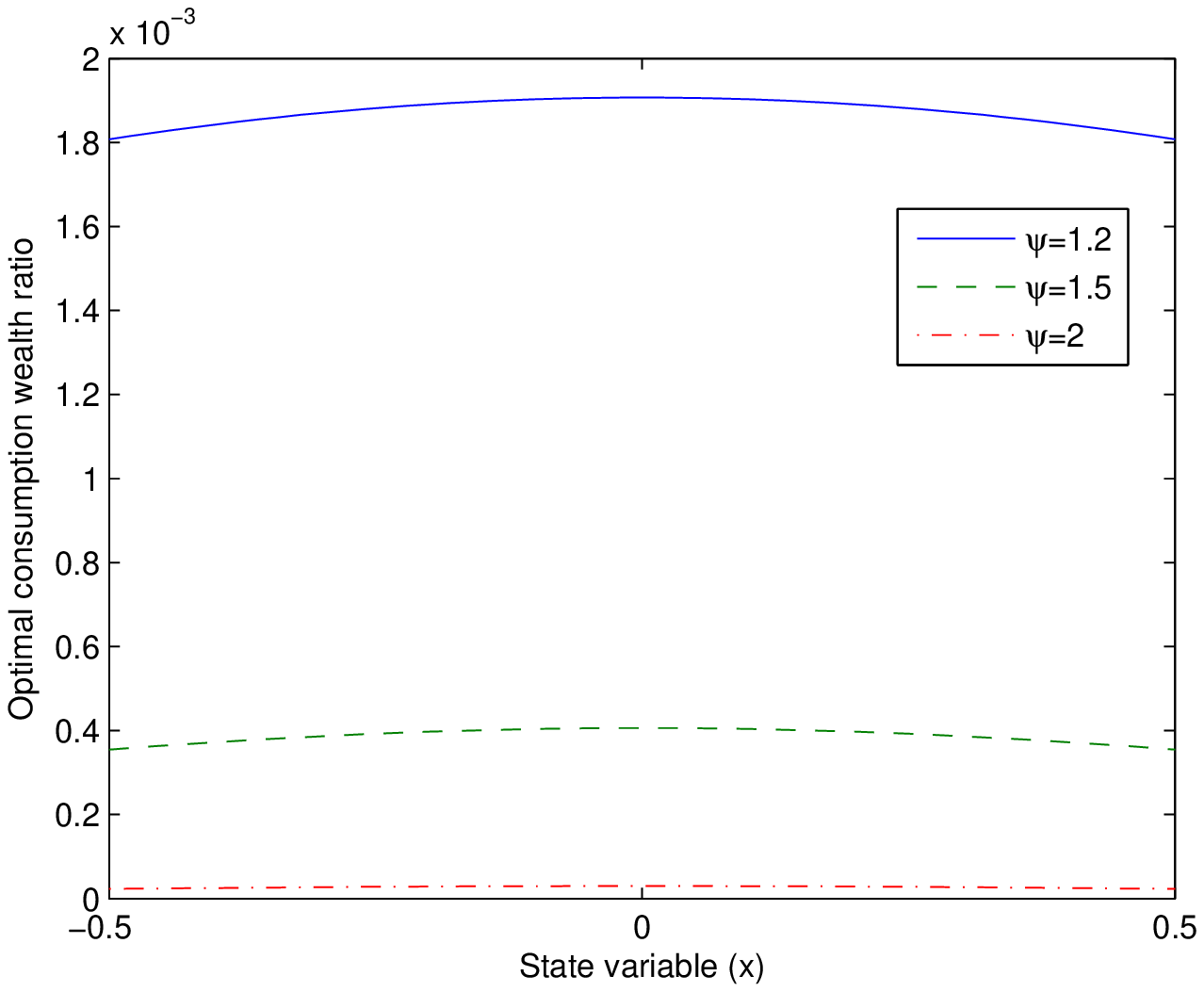}
}
\subfigure{
\includegraphics[scale=0.53]{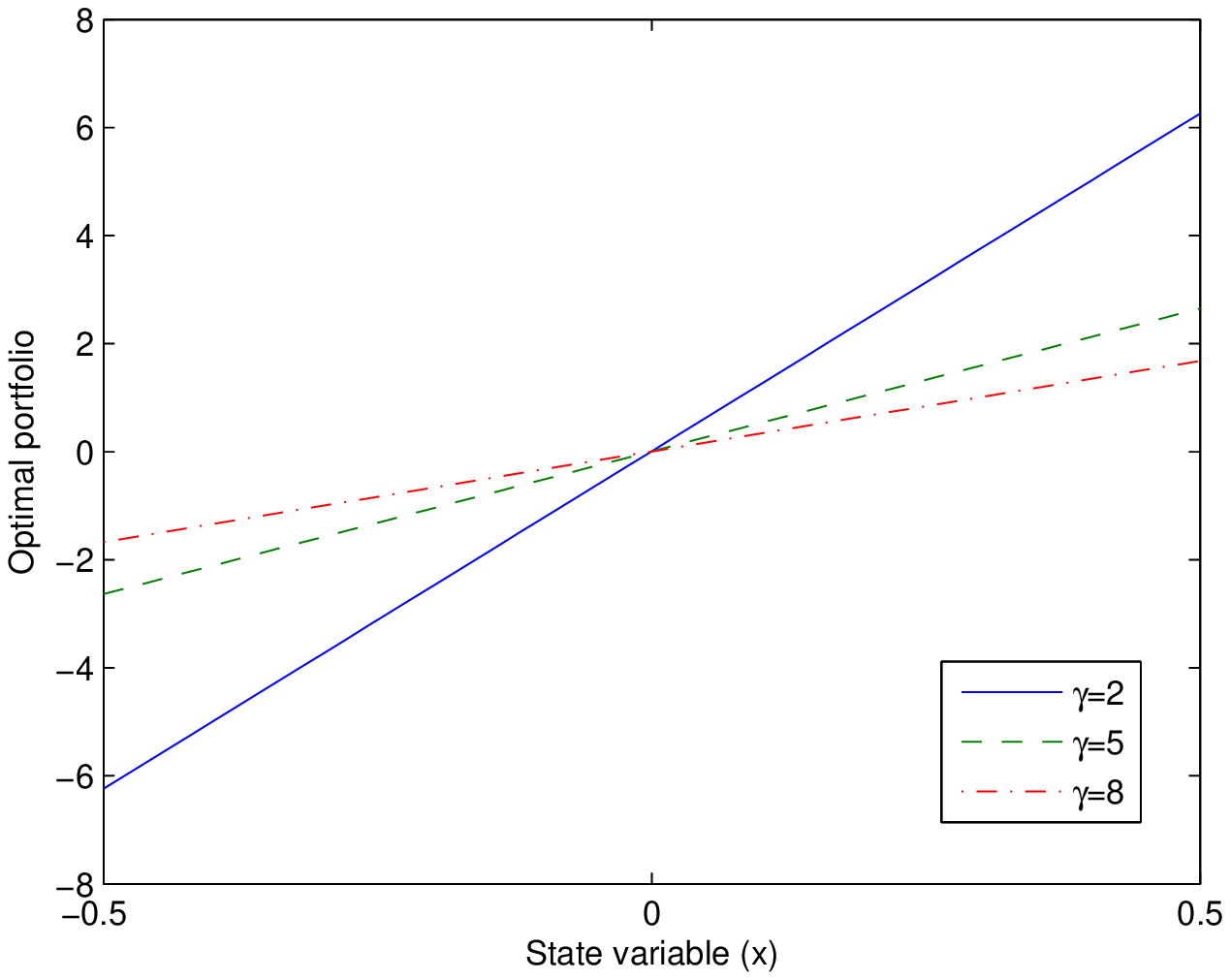}
}
\caption{Both figures use parameters in \eqref{eq: OU para}, and $r=0.0014$, and $\delta=0.0052$. They are both time $0$ values for a problem with time horizon $T=12$ months. The left panel takes $\gamma=5$. The optimal consumption wealth ratio for the $\psi=0.2$ case is much larger than those displaced in the left panel.  The right panel takes $\psi=1.5$.}
\label{fig: OU}
\end{figure}

\appendix
\section{Proofs in Section \ref{subsec: Epstein-Zin}}\label{app: Epstein-Zin}
Let us first introduce several notation which will used throughout the appendices.
\begin{itemize}
 \item Let $\mathcal{S}^2$ denote the space of all $1-$dimensional continuous adapted processes $(Y_t)_{0\leq t\leq T}$ such that the norm $\expec\bra{\sup_{0\leq s\leq T} |Y_s|^2}<\infty$.
 \item Let $\mathcal{S}^\infty$ be the subspace of $\mathcal{S}^2$ such that the norm $\norm{\sup_{0\leq s\leq T} |Y_s|}_\infty<\infty$.
 \item Denote by $\mathcal{T}$ the set of all $\F-$stopping time $\tau$ such that $0\leq \tau\leq T$. The process $Y$ is of class $D$ if the family $\{Y_\tau; \tau\in \mathcal{T}\}$ is uniformly integrable.
 \item Let $\mathcal{M}^2$ denote the class of (multidimensional) predictable processes $(Z_t)_{0\leq t\leq T}$ such that $\expec\bra{\int_0^T |Z_s|^2 ds}<\infty$.
 \item Denote by $\BMO$ the class of martingales $M$ such that $\sup_{\tau \in \mathcal{T}}\norm{\expec[|\langle M\rangle_T - \langle M\rangle_\tau|\,|\, \F_\tau]}_\infty<\infty$.
\end{itemize}

\begin{proof}[Proof of Proposition \ref{prop: EZ bsde}]
 The proof is split into several steps. First when the terminal condition is bounded, the solution is constructed by slightly modifying the proof of \cite[Theorem 2.2]{Pardoux-note}. For general terminal conditions, the solution is obtained by  the localization technique in \cite{Briand-Hu}. Finally, uniqueness is proved and \eqref{eq: EZ utility} is verified. For simplicity of notation, we denote $\xi= e^{-\delta \theta T}c_T^{1-\gamma}$ throughout this proof.

 \vspace{2mm}

 \noindent{\underline{Step 1: Bounded terminal condition.}} When $\xi^2 \leq C$ for some constant $C$, consider the following truncated BSDE:
 \begin{equation}\label{eq: trunc bsde}
  Y^n_t = \xi + \int_t^T F^n(s, c_s, Y^n_s) \, ds - \int_t^T Z^n_s dB_s,
 \end{equation}
 where $F^n(t, c_t, y) = \delta \theta e^{-\delta t} (c_t^{1-\frac{1}{\psi}}\wedge n) (|y| \wedge n)^{1-\frac{1}{\theta}}$. Note that $y\mapsto F^n(t, c_t, y)$ is Lipschitz, in particular, it is differentiable at $y=0$ due to $1-1/\theta >0$. Therefore \eqref{eq: trunc bsde} admits a unique solution $(Y^n, Z^n)\in \cS^2 \times \cM^2$. The first component of such solution is also nonnegative. Indeed, consider \eqref{eq: trunc bsde} with zero as the terminal condition. Such BSDE admits a unique solution $(\tilde{Y}^n, \tilde{Z}^n) \equiv (0,0)$ in $\cS^2 \times \cM^2$. Since $\xi\geq 0$, it follows from the comparison theorem for BSDEs with Lipschitz generators that $Y^n \geq \tilde{Y}^n=0$. On the other hand, since $\theta<0$, $F^n$ is decreasing in $n$, the comparison theorem then implies that $(Y^n)_n$ is decreasing. Hence $Y := \downarrow \lim_{n\rightarrow \infty} Y^n$ is well defined and nonnegative.

 To take the limit of $(Y^n, Z^n)_n$, let us derive the following uniform estimate. Applying It\^{o}'s formula to $(Y^n)^2$ yields
 \begin{equation*}\label{eq: Y Z bdd}
 (Y^n)_t^2 + \expec_t\bra{\int_t^T |Z^n_s|^2 \, ds} = \expec_t\bra{\xi^2} + 2 \, \expec_t\bra{\int_t^T Y^n_s F^n(s, c_s, Y^n_s)\, ds} \leq \expec_t[\xi^2]
  \leq C, \quad \text{for any } t, n,
 \end{equation*}
 where the first inequality follows from $Y^n\geq 0$ and $F^n\leq 0$. The previous estimate yields
 \begin{equation}\label{eq: Y Z bdd 2}
  (Y^n)^2 \leq C \quad \text{and} \quad \expec\bra{\int_0^T |Z^n_s|^2 \, ds} \leq C, \quad \text{ for any } n.
 \end{equation}
 Therefore there exists $Z\in \cM^2$ such that $(Z^n)_n$ converges to $Z$ weakly. Note that $\lim_{n\rightarrow \infty} F^n(t, c_t, y) = F(t, c_t, y)$, $\lim_{n\rightarrow \infty} Y^n = Y$, and
 \begin{align*}
  0\geq F^n(t, c_t, Y^n_t)\geq F(t, c_t, Y^n_t) \geq C^{\frac12 - \frac{1}{2\theta}} \delta \theta e^{-\delta t} c_t^{1-\frac{1}{\psi}}, \quad \text{ for any } n,
 \end{align*}
 where the third inequality holds due to the first estimate in \eqref{eq: Y Z bdd 2}. The dominated convergence theorem then implies that
 \[
  \lim_{n\rightarrow \infty} \int_t^T \left|F^n(s, c_s, Y^n_s) - F(s, c_s, Y_s)\right|ds=0, \quad \text{ for any } t.
 \]
 Now we prove the convergence of $(Z^n)_n$ in $\cM^2$. Applying It\^{o}'s formula to $|Y^n-Y^m|^2$ yields
 \begin{equation}\label{eq: cauchy}
 \begin{split}
  &\expec[|Y^n_0 - Y^m_0|^2] + \expec\bra{\int_0^T |Z^n_s-Z^m_s|^2\, ds} \\ =& 2 \, \expec\bra{\int_0^T \pare{Y^n_s - Y^m_s}\pare{F^n(Y^n_s) - F^m(Y^m_s)}\, ds}\\
  =& 2 \, \expec\bra{\int_0^T \pare{Y^n_s - Y^m_s}\pare{F^n(Y^n_s) - F^n(Y^m_s)}\, ds} + 2 \, \expec\bra{\int_0^T \pare{Y^n_s - Y^m_s}\pare{F^n(Y^m_s) - F^m(Y^m_s)}\, ds}\\
  \leq & 2 \, \expec\bra{\int_0^T \pare{Y^n_s - Y^m_s}\pare{F^n(Y^m_s) - F^m(Y^m_s)}\, ds}\\
  \leq & 4 \delta |\theta| C^{\frac12-\frac{1}{2\theta}} \, \expec\bra{\int_0^T e^{-\delta s}\left|c_s^{1-\frac{1}{\psi}} \wedge n- c_s^{1-\frac{1}{\psi}}\wedge m \right|\, ds},
 \end{split}
 \end{equation}
 where the first inequality holds due to the fact that $y\mapsto F^n(t, c_t, y)$ is decreasing and the second inequality follows from the first estimate in \eqref{eq: Y Z bdd 2}. Since $c\in \C_a$, the dominated convergence theorem implies the right hand side of \eqref{eq: cauchy} converges to zero as $n,m\rightarrow \infty$. Combining the previous convergence with the weak convergence of $(Z^n)_n$, we obtain
 \begin{equation*}\label{eq: Zn conv}
  \lim_{n\rightarrow \infty} \expec\bra{\int_0^T |Z^n_s-Z_s|^2 \, ds} =0,
 \end{equation*}
 The Burkholder-Davis-Gundy inequality then implies
 \[
  \prob-\lim_{n\rightarrow \infty} \sup_{t\leq T} \left|\int_t^T (Z^n_s - Z_s) dB_s\right| =0,
 \]
 where $\prob-\lim$ stands for the convergence in probability. Passing to a subsequence, we obtain almost sure convergence. Therefore, sending $n\rightarrow \infty$ in \eqref{eq: trunc bsde}, we obtain that $(Y, Z)\in \cS^\infty \times \cM^2$ solves \eqref{eq: bsde trans} and $Y$ is nonnegative. Moreover, since
 \[
  |Y^n_t - Y^m_t| \leq \int_t^T \left|F^n(s, c_s, Y^n_s) - F^m(s, c_s, Y^m_s)\right| ds + \left|\int_t^T (Z^n_s - Z^m_s) dB_s\right|,
 \]
 after taking limits on $m$ and supremum over $t$, we obtain
 \[
  \sup_{t\leq T} |Y^n_t - Y_t| \leq \int_0^T \left|F^n(s, c_s, Y^n_s) - F(s, c_s, Y_s)\right| ds + \sup_{t\leq T}\left|\int_t^T (Z^n_s - Z_s) dB_s\right|.
 \]
 Therefore $(Y^n)_n$ converges to $Y$ uniformly in $t$, implying that $Y$ is a continuous process.

 \vspace{2mm}

 \noindent{\underline{Step 2: General terminal condition.}} When $\xi$ is not bounded, set $\xi^n := \xi \wedge n$ and consider
 \[
  Y^n_t = \xi^n + \int_t^T F(s, c_s, Y^n_s) \, ds - \int_t^T Z^n_s \, dB_s.
 \]
 Results from the previous step imply that this BSDE admits a solution $(Y^n, Z^n)\in \cS^\infty \times \cM^2$ with $Y^n \geq 0$. Moreover, since $F\leq 0$, $Y^n_t \leq \expec_t[\xi]$ for all $n$ and $t\in[0,T]$. This a priori bound allows us to construct a solution to \eqref{eq: bsde trans} via the localization technique in \cite{Briand-Hu}. We outline the construction below.

 Consider $\tau_k:= \inf\{t\geq 0\,:\, \expec_t[\xi]\geq k\}\wedge T$ for each $k\in \mathbb{N}$.
 Then $(Y^{n,k}_t, Z^{n,k}_t) := (Y^n_{t \wedge \tau_k}, Z^n_t \indic_{\{t\leq \tau_k\}})$ satisfies the following BSDE
 \[
  Y^{n,k}_t = Y^n_{\tau_k} + \int_t^T \indic_{\{s\leq \tau_k\}} F(s, c_s, Y^{n,k}_s)\, ds -\int_t^T Z^{n,k}_s dB_s.
 \]
 Since $0\leq Y^{n,k}_s \leq \expec_{s\wedge \tau_k}[\xi] \leq k$, we have
 \[
  0\geq F(s, c_s, Y^{n,k}_s) \geq \delta \theta k^{1-\frac{1}{\theta}} e^{-\delta s} c_s^{1-\frac{1}{\psi}}.
 \]
 Then $c\in \C_a$ implies $\expec[\int_0^T F(s, c_s, Y^{n,k}_s) ds]<\infty$. On the other hand, since $\xi^n \leq \xi^{n+1}$ and $y\mapsto F(\cdot, \cdot, y)$ satisfies the monotonicity condition, then the comparison result (cf. \cite[Theorem 2.4]{Pardoux-note}) implies $Y^{n,k} \leq Y^{n+1,k}$. Utilizing the same argument as in Step 1, we obtain $\tilde{Y}^k := \uparrow\lim_{n} Y^{n,k}$ and $\tilde{Z}^k\in \mathcal{M}^2$ such that $\lim_{n} Z^{n,k} =\tilde{Z}^k$ in $\mathcal{M}^2$, and $(\tilde{Y}^k, \tilde{Z}^k)$ solves the BSDE
 \begin{equation}\label{eq: BSDE tY}
  \tilde{Y}^k_t = \tilde{Y}^k_{\tau_k} + \int_t^T \indic_{\{s\leq \tau_k\}} F(s, c_s, \tilde{Y}^k_s) \, ds -\int_t^T \tilde{Z}^k_s dB_s,
 \end{equation}
 where $\tilde{Y}^k_{\tau_k} = \uparrow \lim_n Y^{n}_{\tau_k}$. Following from the definition of $(\tilde{Y}^k, \tilde{Z}^k)$, $\tilde{Y}^{k+1}_{t\wedge \tau_k} = \tilde{Y}^k_t$ and $\tilde{Z}^{k+1}_t \indic_{\{t\leq \tau_k\}} = \tilde{Z}^k_t$. Therefore we define
 \[
  Y_t := \tilde{Y}^k_t\quad \text{and} \quad Z_t := \tilde{Z}^k_t, \quad \text{when } t\in [0, \tau_k].
 \]
 This construction implies $\lim_{t\rightarrow T}Y_t= \xi$. Indeed, on $\{\xi\leq k\}$, $\tau_k=T$ and $\lim_{t\rightarrow T}Y^n_{t} =\xi$ for any $n\geq k$. Therefore $\lim_{t\rightarrow T}Y_{t} = \lim_{t\rightarrow \tau_k} \tilde{Y}^k_{t} = \lim_{t\rightarrow \tau_k} Y^{n,k}_{t}= \lim_{t\rightarrow T} Y^n_t=\xi$ on $\{\xi\leq k\}$ when $n\geq k$. This implies $\lim_{t\rightarrow T}Y_t=\xi$, since $\uparrow\lim_{k\rightarrow \infty} \{\xi \leq k\}=\Omega$. Now sending $k\rightarrow \infty$ on both sides of \eqref{eq: BSDE tY}, we confirm that $(Y, Z)$ solves \eqref{eq: bsde trans}. By this construction, $Y$ is continuous and satisfies $0\leq Y_t \leq \expec_t[\xi]$ for $t\in [0,T]$, hence $Y$ is of class $D$. The same argument as in \cite[Page 612]{Briand-Hu} shows $\int_0^T |Z_t|^2 dt <\infty$.

 \vspace{2mm}

 \noindent{\underline{Step 3: Remaining statements.}} For future reference, we prove a comparison result for \eqref{eq: bsde trans}. Let $(Y, Z)$ (resp. $(\tilde{Y}, \tilde{Z})$) be a super-solution (resp. sub-solution) to \eqref{eq: bsde trans}, i.e.,
 \[
  Y + \int_0^\cdot F(s, c_s, Y_s) ds \text{ is a local supermartingale and } \tilde{Y} + \int_0^\cdot F(s, c_s, \tilde{Y}_s) ds \text{ is a local submartingale},
 \]
 with $Y_T \geq \xi \geq \tilde{Y}_T$,
 meanwhile $Z$ and $\tilde{Z}$ are determined by Doob-Meyer decomposition and martingale representation. Assuming that both $Y$ and $\tilde{Y}$ are of class $D$, then $Y\geq \tilde{Y}$. Moreover, if $Y_T > \tilde{Y}_T$, then $Y_t > \tilde{Y}_t$ for any $t\leq T$.

 To prove this comparison result,
 define
 \[
  \alpha_t:= \left\{\begin{array}{ll}\frac{F(t, c_t, Y_t) - F(t, c_t, \tilde{Y}_t)}{Y_t - \tilde{Y}_t}, & Y_t \neq \tilde{Y}_t\\ 0, & Y_t = \tilde{Y}_t \end{array}\right..
 \]
 Since $y\mapsto F(\cdot, \cdot, y)$ is decreasing, we have $\alpha \leq 0$. It then follows that $e^{\int_0^\cdot \alpha_s ds}(Y- \tilde{Y})$ is a local supermartingale, hence a supermartingale, since the exponential factor is bounded and both $Y$ and $\tilde{Y}$ are of class $D$. Therefore, $Y_T\geq\tilde{Y}_T$ implies $Y\geq\tilde{Y}$. Moreover when $Y_T > \tilde{Y}_T$, we obtain the strict comparison $Y_t > \tilde{Y}_t$ for any $t\leq T$. The uniqueness follows from the comparison result directly. Since $\gamma>1$, then $\xi=e^{-\delta \theta T} c_T^{1-\gamma} >0$. Therefore $Y>0$ follows from the strict comparison.

 Finally, we verify that $V^c$ satisfies \eqref{eq: EZ utility}. To this end, since $(Y, Z)$ solves \eqref{eq: bsde trans}, $(V^c_t, Z^c_t) = e^{\delta \theta t} (Y_t, Z_t)/(1-\gamma)$ satisfies \eqref{eq: EZ bsde}, implying that $V^c +\int_0^\cdot f(c_s, V^c_s) ds$ is a local martingale. Taking a localizing sequence $(\sigma_n)_{n\geq 1}$ for $V^c +\int_0^\cdot f(c_s, V^c_s) ds$, we obtain
 \begin{equation*}\label{eq: V^c loc}
  V^c_0 +\delta \theta \expec\bra{\int_0^{T\wedge \sigma_n} V^c_s ds}= \expec\bra{V^c_{T\wedge \sigma_n} + \int_0^{T\wedge \sigma_n} \delta \frac{c_s^{1-\frac{1}{\psi}}}{1-\frac{1}{\psi}}\pare{(1-\gamma) V^c_s}^{1-\frac{1}{\theta}} ds}.
 \end{equation*}
 Sending $n\rightarrow \infty$ on both sides, note that $V^c\leq 0$ and $\psi>1$, therefore the integrand on the left side is negative and the integrand on the right side is positive. The monotone convergence theorem and the class $D$ property of $V^c$ then yield
 \begin{equation}\label{eq: exp V^c}
  V^c_0 +\delta \theta \expec\bra{\int_0^{T} V^c_s ds}= \expec\bra{U(c_T)+ \int_0^{T} \delta \frac{c_s^{1-\frac{1}{\psi}}}{1-\frac{1}{\psi}}\pare{(1-\gamma) V^c_s}^{1-\frac{1}{\theta}} ds}.
 \end{equation}
 Since $0\geq \expec\bra{\int_0^T V^c_s ds} = \frac{1}{1-\gamma} \expec\bra{\int_0^T e^{\delta \theta s}Y_s ds} \geq \frac{1}{1-\gamma} \expec\bra{\int_0^T Y_s ds} \geq \frac{1}{1-\gamma} \int_0^T \expec[\xi] ds>-\infty$, where the second inequality holds since $\gamma>1$ and $\theta<0$, the third inequality follows from $Y_s \leq \expec_s[\xi]$ and $\gamma>1$. Subtracting $\delta \theta\expec\bra{\int_0^T V^c_s ds}$ on both sides of \eqref{eq: exp V^c}, we confirm \eqref{eq: EZ utility}.
\end{proof}

The concavity of $c\mapsto V^c$ is proved in the following. This proof utilizes simultaneously the joint concavity of the generator for \eqref{eq: bsde ord trans} and the class $D$ property of the solution to \eqref{eq: bsde trans}.

\begin{proof}[Proof of Proposition \ref{prop: EZ concave}]
 Denote the generator of \eqref{eq: bsde ord trans} as $\mathbb{F}(t, c_t, y, z) = \delta e^{-\delta t} \frac{c_t^{1-\frac{1}{\psi}}}{1-\frac{1}{\psi}} + \frac12 (\theta -1) \frac{z^2}{y}$. For $c, \tilde{c}$ and $\alpha c + (1-\alpha) \tilde{c}\in \mathcal{C}_a$, denote $\Delta X = \alpha X + (1-\alpha) \tilde{X}$, for $X=c, \bY, \bZ$ and $\tilde{X}= \tilde{c}, \tilde{\bY}, \tilde{\bZ}$, respectively. It follows from \eqref{eq: bsde ord trans} that
 \[
  d \Delta \bY_t = \bra{ -\delta e^{-\delta t} \frac{\Delta c_t^{1-\frac{1}{\psi}}}{1-\frac{1}{\psi}} - \frac12 (\theta -1) \frac{\Delta \bZ_t^2}{\Delta \bY_t} + A_t}  dt + \Delta \bZ_t dB_t,
 \]
 where, due to the concavity of $(c_t, y, z)\mapsto \mathbb{F}(t, c_t, y, z)$,
 \[
  A_t = \frac{\delta e^{-\delta t}}{1-\frac{1}{\psi}}\bra{\Delta c_t^{1-\frac{1}{\psi}} - \alpha c_t^{1-\frac{1}{\psi}} - (1-\alpha) \tilde{c}_t^{1-\frac{1}{\psi}}} + \frac12 (\theta -1) \bra{\frac{\Delta \bZ^2_t}{\Delta \bY_t} - \alpha \frac{\bZ^2_t}{\bY_t} - (1-\alpha) \frac{\tilde{\bZ}_t^2}{\tilde{\bY}_t}}\geq 0,
 \]
 and $\Delta \bY_T \leq e^{-\delta T} \Delta c_T^{1-1/\psi}/(1-1/\psi)$. Set
 \[
  \Delta Y = ((1-1/\psi) \Delta \bY)^\theta \quad \text{ and } \quad \Delta Z = (1-\gamma) ((1-1/\psi) \Delta \bY)^{\theta -1} \Delta \bZ.\]
 It\^{o}'s formula yields
 \[
  d\Delta Y_t =( -\delta \theta e^{-\delta t} \Delta c_t^{1-\frac{1}{\psi}} \Delta Y_t^{1-\frac{1}{\theta}} + (1-\gamma) \Delta Y_t^{1-\frac{1}{\theta}}A_t)\,  dt + \Delta Z_t dB_t,
 \]
 where  $(1-\gamma) \Delta Y_t^{1-1/\theta} A_t\leq 0$.
 On the other hand, $\Delta Y_T \geq e^{-\delta \theta T} \Delta c_T^{1-\gamma}$. Therefore $(\Delta Y, \Delta Z)$ is a super-solution to \eqref{eq: bsde trans}. On the other hand, $\Delta Y$ is of class $D$. Indeed, since $\theta<0$,
 \begin{equation}\label{eq: Delta Y}
 \Delta Y = \pare{\pare{1-1/\psi} \Delta \bY}^\theta \leq \alpha \pare{\pare{1-1/\psi} \bY}^\theta + (1-\alpha) (\pare{1-1/\psi} \tilde{\bY})^\theta = \alpha Y + (1-\alpha) \tilde{Y},
 \end{equation}
 where $Y$ (resp. $\tilde{Y}$) is the first component of the solution to \eqref{eq: bsde trans} with $c$ (resp. $\tilde{c}$). Therefore, $\Delta Y$ is of class $D$, because both $Y$ and $\tilde{Y}$ are. Now consider $Y^{\Delta c}$ as the first component of solution of \eqref{eq: bsde trans} where $c$ is replaced by $\Delta c$. It then follows from \eqref{eq: Delta Y} and the comparison result in Step 3 of the previous proof that
 \[
  \alpha Y_0 + (1-\alpha) \tilde Y_0 \geq \Delta Y_0 \geq Y_0^{\Delta c}.
 \]
 Dividing the previous inequality by $(1-\gamma)$ on both sides, we confirm $\alpha V^c_0 + (1-\alpha) V^{\tilde{c}}_0 \leq V^{\alpha c + (1-\alpha) \tilde{c}}_0$.
\end{proof}

\section{Proofs in Section \ref{subsec: portfolio opt}}\label{app: portfolio opt}
 Even though the generator $H$ in \eqref{eq: op bsde} has an exponential term in $y$, the parameter specification $\gamma>1$ and $\psi>1$ allow us to derive a priori bounds for $Y$. Then a solution to \eqref{eq: op bsde} is constructed via the localization technique in \cite{Briand-Hu}.

\begin{proof}[Proof of Proposition \ref{prop: existence}]
  Due to Assumption \ref{ass: op bsde} i), $\overline{W} := W- \int_0^\cdot \frac{1-\gamma}{\gamma} \rho' \sigma' \Sigma^{-1} \mu(X_s) ds$ is a $\overline{\prob}-$Brownian motion. Therefore, \eqref{eq: op bsde} can be rewritten under $\overline{\prob}$,
  and all expectations are taken with respect to $\overline{\prob}$ throughout this proof. On the other hand, recall that $\gamma>1$ and $r+ \frac{1}{2\gamma} \mu' \Sigma^{-1} \mu$ is  bounded from below. Therefore there exists a constant $h_{max}$ such that $h \leq h_{max}$. However, $\mu'\Sigma^{-1}\mu$, in many widely used models, is an unbounded function of the state variable, hence $h$ and $H(t,0,0) = h_t-\delta \theta+ \theta \frac{\delta^\psi}{\psi}$ are not bounded from below. Therefore we introduce
 \begin{equation}\label{eq: op bsde tran}
  \cY_t = \xi + \int_t^T \cH(s, \cY_s, Z_s) ds - \int_t^T Z_s d\overline{W}_s,
 \end{equation}
 where $\cY_t = Y_t + \int_0^t (h_s-\delta \theta) \,ds$, $\xi= \int_0^T (h_s-\delta \theta) \,ds$, and
 \[
  \cH(t,y,z) = \frac12 z M_t z' + \theta \frac{\delta^\psi}{\psi} e^{\frac{\psi}{\theta}\int_0^t h_s -\delta \theta \,ds} e^{-\frac{\psi}{\theta}y}.
 \]

 Consider a truncated version of \eqref{eq: op bsde tran}:
 \begin{equation}\label{eq: op trunc gamma>1}
  \cY^n_t = \xi^n + \int_t^T \cH^n(s, \cY^n_s, Z^n_s) \, ds - \int_t^T Z^n_s d\overline{W}_s,
 \end{equation}
 where $\xi^n = \int_0^T h_s \vee (-n) -\delta \theta \, ds$ is bounded and
 \[
  \cH^n(t,y,z) = \frac12 z M_t z'  + \theta \frac{\delta^\psi}{\psi} e^{\frac{\psi}{\theta} \int_0^t  h_s \vee (-n) -\delta \theta \, ds}\pare{e^{-\frac{\psi}{\theta} y}\wedge n}.
 \]
 This truncated generator $\cH^n$ is Lipschitz in $y$ and quadratic in $z$. Indeed, since eigenvalues of $\sigma' \Sigma^{-1} \sigma$ is either $0$ or $1$, $0\leq z \rho' \sigma' \Sigma^{-1} \sigma \rho z' \leq z \rho' \rho z' \leq |z|^2$. Then $\gamma>1$ and the definition of $M$ after \eqref{eq: H} implies
 \begin{equation}\label{eq: M growth}
  0< \frac{1}{\gamma} |z|^2 \leq z M(X) z' \leq |z|^2.
 \end{equation}
 Therefore it follows from \cite[Theorem 2.3]{Kobylanski} that \eqref{eq: op trunc gamma>1} admits a solution $(\cY^n, Z^n)\in \cS^\infty \times \cM^2$. Moreover, due to $\theta<0$, $\cH^n$ is decreasing in $n$. The construction of $\cY^n$ in \cite[Theorem 2.3]{Kobylanski} yields $\cY^n \geq \cY^{n+1}$. In what follows, we derive a priori bounds on $\cY^n$ uniformly in $n$. This uniform estimate facilitates the construction of a solution to \eqref{eq: op bsde tran}.

 On the one hand, $\theta<0$ and the third inequality in \eqref{eq: M growth} yield $\cH^n(t, y, z)\leq \frac12 |z|^2$. Consider
 \[
  \overline{Y}^n_t = \xi^n + \int_t^T \frac12 |\overline{Z}^n_s|^2 ds -\int_t^T \overline{Z}^n_s \, d\overline{W}_s,
 \]
 which has an explicit solution $\overline{Y}^n_t = \log \expec_t \bra{e^{\int_0^T h_s \vee (-n) -\delta \theta \, ds}}$. Then
 \begin{equation}\label{eq: oY}
  \overline{Y}^n_t - \int_0^t h_s \vee (-n) -\delta \theta \, ds = \log \expec_t \bra{e^{\int_t^T h_s \vee (-n) -\delta \theta \, ds}} \leq (h_{max}-\delta \theta) (T-t).
 \end{equation}
 On the other hand, when $y - \int_0^t h_s \vee (-n) -\delta \theta \, ds \leq (h_{max}-\delta \theta)(T-t)$, the first inequality in \eqref{eq: M growth} and $\theta<0$ imply $\cH^n(t, y, z) \geq \theta \frac{\delta^\psi}{\psi} e^{(\delta \psi-\frac{\psi}{\theta} h_{max}) T}$. Therefore consider the BSDE
 \[
  \underline{Y}_t = \xi + \theta \frac{\delta^\psi}{\psi} e^{(\delta \psi -\frac{\psi}{\theta} h_{max})T} (T-t) -\int_t^T \underline{Z}_s d\overline{W}_s,
 \]
 whose solution $\underline{Y}$ admits a representation $\underline{Y}_t = \expec_t[\xi] + \theta \frac{\delta^\psi}{\psi} e^{(\delta \psi -\frac{\psi}{\theta} h_{max}) T} (T-t)$.

 Now since $\mathcal{H}^n$ is sandwiched between two generators with simpler forms, comparison result yields 
 \begin{equation}\label{eq: oY bdd}
  \expec_t[\xi] + \theta \frac{\delta^\psi}{\psi} e^{(\delta \psi -\frac{\psi}{\theta} h_{max}) T} (T-t) = \underline{Y}_t \leq \cY^n_t \leq \overline{Y}^n_t = \log \expec_t \bra{e^{\int_0^T h_s \vee (-n) -\delta \theta \, ds}} \leq (h_{max}-\delta \theta) T,
 \end{equation}
 for any $n>0$.
 These uniform bounds on $\cY^n$ allow us to construct a solution $(\cY, Z)$ to \eqref{eq: op bsde tran} using the localization technique in \cite[Theorem 2]{Briand-Hu}; see also Step 2 in the proof of Proposition \ref{prop: EZ bsde}. The resulting $\cY$ satisfies
 \begin{equation}\label{eq: cY bdd}
  \expec_t[\xi] + \theta \frac{\delta^\psi}{\psi} e^{(\delta \psi-\frac{\psi}{\theta} h_{max}) T} (T-t) \leq \cY_t \leq  \log \expec_t \bra{e^{\xi}}.
 \end{equation}
 The previous inequalities imply that $\lim_{t\rightarrow T} \cY_t= \xi$. Hence $\cY$ satisfies the terminal condition of \eqref{eq: op bsde tran}. The desired estimates on $Y$ follows after subtracting $\int_0^t h_s -\delta \theta \, ds$ on both sides of the previous inequalities, in particular,
 \begin{equation}\label{eq: Y-ubb}
   Y_t = \mathcal{Y}_t -\int_0^t h_s -\delta \theta \,ds \leq \log \expec_t \bra{e^{\int_t^T (h_s -\delta \theta)ds}}\leq (h_{max}-\delta \theta) (T-t).
 \end{equation}For the statement on $Z$, take a localization sequence $(\sigma_n)_n$ for $\int_0^\cdot Z_s d\overline{W}_s$, \eqref{eq: op bsde tran} yields
 \[
  \frac12 \expec\bra{\int_0^{\sigma_n}Z_s M_s Z_s' ds} = \cY_0 - \expec[\cY_{\sigma_n}] - \theta \frac{\delta^\psi}{\psi} \expec\bra{\int_0^{\sigma_n} e^{\frac{\psi}{\theta} \int_0^s h_u -\delta \theta du} e^{-\frac{\psi}{\theta} \cY_s}ds}.
 \]
 Sending $n\rightarrow \infty$ on both sides, applying the second inequality in \eqref{eq: M growth} to the left-hand side, the first inequality in \eqref{eq: cY bdd} to the second term on the right-hand side, and \eqref{eq: Y-ubb} to the third term, we confirm $\expec[\int_0^T |Z_s|^2 ds]<\infty$.
\end{proof}

The following several results prepare the proofs of Theorems \ref{thm: verification} and \ref{thm: state price}. First we show $\frac{w^{1-\gamma}}{1-\gamma} e^{Y_0}$ is an upper bound for the optimal value among permissible strategies.

\begin{lem}\label{lem: value ubb}
 Let Assumption \ref{ass: op bsde} hold.
 For any permissible $(\pi, c)$,
 \begin{equation}\label{eq: value ubb}
 \frac{w^{1-\gamma}}{1-\gamma} e^{Y_0} \geq V^c_0,
 \end{equation}
 where $V^c$ is defined in Proposition \ref{prop: EZ bsde}, $Y$ is constructed in Proposition \ref{prop: existence},  and $c$ is financed by $\pi$ via \eqref{eq: wealth}.
\end{lem}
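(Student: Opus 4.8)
The plan is to build the candidate value process $\hat V_t := \frac{\mathcal{W}_t^{1-\gamma}}{1-\gamma}e^{Y_t}$ (where $\mathcal{W} := \mathcal{W}^{\pi,c}$ and $(Y,Z)$ comes from Proposition~\ref{prop: existence}), to show that $\hat V + \int_0^\cdot f(c_s,\hat V_s)\,ds$ is a local supermartingale with $\hat V_T \ge V^c_T$, and then to transfer this to a comparison in the monotone BSDE \eqref{eq: bsde trans} so as to invoke the comparison result from Step~3 of the proof of Proposition~\ref{prop: EZ bsde}. First I would make the heuristic drift computation \eqref{eq: R drift} rigorous: applying It\^{o}'s formula to $\mathcal{W}^{1-\gamma}$ and to $e^{Y}$ (both continuous semimartingales, using $\int_0^T|Z_s|^2\,ds<\infty$ and that the coefficients are finite along the non-explosive path of $X$), the finite-variation part of $\hat V_t + \int_0^t f(c_s,\hat V_s)\,ds$ is exactly the right-hand side of \eqref{eq: R drift} evaluated at $(\pi,\tilde c)=(\pi_t, c_t/\mathcal{W}_t)$, with $Z^\bot=0$ since the martingale part of $e^Y$ has no $W^\bot$-component. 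Because the prefactor $\frac{\mathcal{W}^{1-\gamma}}{1-\gamma}e^Y$ is negative and, by the definition of $H$ in \eqref{eq: H}, the bracketed expression is bounded below by its infima over $\tilde c$ and $\pi$, this drift is nonpositive. Hence $\hat V + \int_0^\cdot f(c_s,\hat V_s)\,ds$ is a local supermartingale, and $\hat V_T = \frac{\mathcal{W}_T^{1-\gamma}}{1-\gamma} = U(\mathcal{W}_T) \ge U(c_T) = V^c_T$, using $Y_T=0$, $c_T\le\mathcal{W}_T$, and that $U$ is increasing because $\gamma>1$.

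Next I would apply the order-reversing transformation that turns \eqref{eq: EZ bsde} into the monotone BSDE \eqref{eq: bsde trans}: set $\hat Y_t := e^{-\delta\theta t}(1-\gamma)\hat V_t = e^{-\delta\theta t}\mathcal{W}_t^{1-\gamma}e^{Y_t}\ge 0$. The same algebra relating \eqref{eq: EZ bsde} to \eqref{eq: bsde trans} shows that the finite-variation part of $\hat Y_t + \int_0^t F(s,c_s,\hat Y_s)\,ds$ equals $(1-\gamma)e^{-\delta\theta t}$ times the nonpositive increment that made $\hat V + \int_0^\cdot f(c_s,\hat V_s)\,ds$ a supermartingale; since $1-\gamma<0$ this increment becomes nondecreasing, so $\hat Y + \int_0^\cdot F(s,c_s,\hat Y_s)\,ds$ is a local submartingale. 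Its terminal value satisfies $\hat Y_T = e^{-\delta\theta T}\mathcal{W}_T^{1-\gamma} \le e^{-\delta\theta T}c_T^{1-\gamma} = \xi$, the terminal condition of \eqref{eq: bsde trans} (again by $\mathcal{W}_T\ge c_T$ and $1-\gamma<0$). Moreover $\hat Y$ is of class $D$: permissibility gives that $\mathcal{W}^{1-\gamma}e^Y$ is of class $D$, and $e^{-\delta\theta t}$ is bounded on $[0,T]$.

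Thus $\hat Y$ is a sub-solution to \eqref{eq: bsde trans}, of class $D$, with terminal value dominated by $\xi$; since $c\in\mathcal{C}_a$, Proposition~\ref{prop: EZ bsde} provides the genuine solution $(Y^c,Z^c)$ of \eqref{eq: bsde trans}, with $Y^c=(1-\gamma)V^c$. The comparison result in Step~3 of the proof of Proposition~\ref{prop: EZ bsde} then yields $\hat Y_0 \le Y^c_0$, that is $(1-\gamma)\hat V_0 \le (1-\gamma)V^c_0$; dividing by $1-\gamma<0$ gives $\hat V_0 \ge V^c_0$, which is \eqref{eq: value ubb} since $\hat V_0 = \frac{w^{1-\gamma}}{1-\gamma}e^{Y_0}$.

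The main obstacle is the first paragraph: verifying that $\hat V$ is a genuine continuous semimartingale with a well-defined finite-variation part even though $r$ and $\mu'\Sigma^{-1}\mu$ need not be bounded (they are locally bounded along the path of $X$, and enter only through the locally bounded factor $\frac{\mathcal{W}^{1-\gamma}}{1-\gamma}e^Y$), and confirming that the pointwise inequality \eqref{eq: H} makes the drift nonpositive for \emph{every} admissible $(\pi,c)$, not just the candidate optimizer. Once the local-supermartingale property is in hand, the rest is a direct application of the monotone comparison principle already established.
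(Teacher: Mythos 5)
Your argument is correct and follows essentially the same route as the paper's proof: both show that $\frac{\mathcal{W}^{1-\gamma}}{1-\gamma}e^{Y}+\int_0^\cdot f(c_s,\cdot)\,ds$ is a local supermartingale via \eqref{eq: R drift} and \eqref{eq: H}, identify it as a supersolution with terminal value dominating $U(c_T)$, and conclude by the class-$D$ comparison result from Step~3 of the proof of Proposition~\ref{prop: EZ bsde}. Your explicit transfer to a sub-solution of the monotone BSDE \eqref{eq: bsde trans} is just the detail the paper leaves implicit when invoking that comparison.
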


\begin{proof}
 This proof extends the technique in \cite{Hu-Imkeller-Muller} to recursive utilities. For a permissible $(\pi, c)$, define
 \[
  R^{\pi, c}_t:= \frac{(\mathcal{W}_t)^{1-\gamma}}{1-\gamma} e^{Y_t} + \int_0^t f\pare{c_s, \frac{(\mathcal{W}_s)^{1-\gamma}}{1-\gamma} e^{Y_s}} \, ds, \quad t\in[0,T],
 \]
 where $\mathcal{W}= \mathcal{W}^{\pi, c}$.
 Then \eqref{eq: R drift} and \eqref{eq: H} imply that $R$ is a local supermartingale. Due to Doob-Meyer decomposition and martingale representation, there exist an increasing process $A$  and $Z^R$ such that $R^{\pi, c}=-A + \int_0^\cdot Z^R_s dB_s$. Therefore, $\pare{\frac{(\mathcal{W})^{1-\gamma}}{1-\gamma} e^Y, Z^R}$ is a supersolution to \eqref{eq: EZ bsde}, whose terminal condition is $(\mathcal{W}_T)^{1-\gamma}/(1-\gamma)\in \mathbb{L}^1$. Indeed, since $(\mathcal{W})^{1-\gamma} e^Y$ is of class $D$ by permissibility and $Y_T=0$, we have $\expec[(\mathcal{W}_T)^{1-\gamma}]<\infty$. On the other hand, consider the utility $V^0_c$ associated to the consumption stream $c$ and the terminal lump sum $\mathcal{W}_T$. The comparison result in the proof of Proposition \ref{prop: EZ bsde} confirms \eqref{eq: value ubb}.
\end{proof}

In what follows we will show that $(\pi^*, c^*)$ is a permissible strategy and it attains the upper bound $\frac{w^{1-\gamma}}{1-\gamma} e^{Y_0}$. First, we establish an important result that certain exponential local martingale associated to $\pi^*$ is a martingale.

\begin{lem}\label{lem: martingale}
 Let Assumptions \ref{ass: coeff}, \ref{ass: op bsde} and \ref{ass: phi} hold. Then $Q:= \mathcal{E}\pare{\int (1-\gamma) (\pi^*_s)' \sigma_s dW^\rho_s + \int Z_s dW_s}$ is a $\prob-$martingale on $[0,T]$.
\end{lem}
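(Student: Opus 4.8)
The plan is to show that the stochastic exponential $Q$ is a true martingale by exhibiting it as a density process under which the state variable $X$ does not explode, which is exactly the role of the Lyapunov function $\phi$ in Assumption \ref{ass: phi}. First I would identify the drift that $W$-driven processes acquire under the candidate measure $\mathbb{Q} := Q_T \cdot \mathbb{P}$: since $\pi^*_t = \tfrac1\gamma \Sigma^{-1}_t(\mu_t + \sigma_t\rho_t Z'_t)$, the integrand in the $dW^\rho$ part combines with the $Z\,dW$ part so that, formally, under $\mathbb{Q}$ the process $X$ from \eqref{eq: sde X} picks up the extra drift $\tfrac{1-\gamma}{\gamma} a\rho'\sigma'\Sigma^{-1}\mu + aM a'\nabla(\,\cdot\,)$-type terms appearing in the operator $\fF$ of \eqref{eq: fF}; more precisely, the generator of $X$ under $\mathbb{Q}$ is $\phi \mapsto \tfrac12\sum A_{ij}\partial^2_{ij}\phi + (b + \tfrac{1-\gamma}{\gamma}a\rho'\sigma'\Sigma^{-1}\mu + aMa'\nabla\psi)'\nabla\phi$ once one conditions along the as-yet-unjustified measure change, and this is where $\fF$ was engineered to match.

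The rigorous route is the standard localization/Lyapunov argument (as in \cite[Chapter 10]{Stroock-Varadhan}, and used for portfolio problems in \cite{Guasoni-Robertson} and \cite{Robertson-Xing-Wishart}). Let $(E_n)_n$ be the exhausting sequence from Assumption \ref{ass: phi} i), and let $\tau_n := \inf\{t : X_t \notin E_n\}\wedge T$; since $X$ does not explode (Assumption \ref{ass: coeff}), $\tau_n \uparrow T$ a.s. The process $Q^{\tau_n}$ is a genuine martingale because on $[0,\tau_n]$ all coefficients are bounded (being continuous on the compact $\overline{E}_n$), so $Q$ is at least a supermartingale and it suffices to prove $\expec[Q_T] = 1$, equivalently $\prob(\tau_n < T \text{ for all } n$, i.e. explosion under $\mathbb{Q}) \to 0$. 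Define on $[0,\tau_n]$ the measure $\mathbb{Q}_n$ with density $Q_{\tau_n}$; under $\mathbb{Q}_n$, $X$ solves an SDE whose drift is $b + \tfrac{1-\gamma}{\gamma}a\rho'\sigma'\Sigma^{-1}\mu$ plus the contribution of the $Z\,dW$ term, and one applies Itô's formula to $\phi(X_{t\wedge\tau_n})$ under $\mathbb{Q}_n$. The drift of $\phi(X_{\cdot\wedge\tau_n})$ is then precisely (a piece of) $\fF[\phi]$ evaluated along $X$ — here I would double-check that the $\tfrac12\nabla\phi' aMa'\nabla\phi$ term genuinely captures the cross-variation coming from $Z$ via the relation between $Z$ and the quadratic form $M$ — and Assumption \ref{ass: phi} ii) bounds this drift above by a constant $K$. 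Thus $\expec^{\mathbb{Q}_n}[\phi(X_{t\wedge\tau_n})] \le \phi(x) + Kt \le \phi(x) + KT$ uniformly in $n$. Combining with Assumption \ref{ass: phi} i), namely $\inf_{E\setminus E_n}\phi \to \infty$, Markov's inequality gives $\mathbb{Q}_n(\tau_n < T) \le (\phi(x)+KT)/\inf_{x\in E\setminus E_n}\phi(x) \to 0$, which yields $\expec[Q_T \indic_{\{\tau_n<T\}}] \to 0$ and hence $\expec[Q_T] = \lim_n \expec[Q_T\indic_{\{\tau_n = T\}}] = \lim_n \expec[Q_{\tau_n}] = 1$.

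The main obstacle — and the step I would spend the most care on — is the bookkeeping in identifying the drift of $\phi(X)$ under the measure change with $\fF[\phi]$, because $Q$ involves both the $W^\rho$-integral (correlated, via $\rho$, with the $W$ driving $X$) and the $Z\,dW$-integral, and one must carefully track which Brownian components $X$ is exposed to. Concretely, $W^\rho = \int\rho(X)dW + \int\rho^\bot(X)dW^\bot$, so only the $\rho\,dW$ part interacts with $X$; the Girsanov drift added to $W$ is therefore $(1-\gamma)(\pi^*)'\sigma\rho + Z$ (as a row vector acting on $dW$), and pushing this through $a\,dW$ in \eqref{eq: sde X} plus the Itô second-order term must reproduce exactly the first-, zeroth- and quadratic-order terms of \eqref{eq: fF}. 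A secondary technical point is justifying that $Q^{\tau_n}$ is a true martingale (not merely local) on the stochastic interval; this follows from Novikov or simply from boundedness of the integrands on $\overline{E}_n$, since $Z$ restricted to $[0,\tau_n]$ is bounded once one knows $Z$ is a continuous-in-$X$ function of the bounded process $X_{\cdot\wedge\tau_n}$ — if $Z$ is only known to be in $\mathcal{M}^2$ via Proposition \ref{prop: existence} rather than as a deterministic function of $X$, one instead argues directly that $Q^{\tau_n}$ is a supermartingale of constant expectation by the same Lyapunov estimate applied with stopping times, which is the cleaner route.
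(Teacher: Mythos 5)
Your overall architecture (stop at the exit times $\tau_n$ of $E_n$, show the stopped exponential is a true martingale, then use the Lyapunov function to prove $\mathbb{Q}_n(\tau_n<T)\to 0$) is exactly the paper's, but two steps that you flag as "points to double-check" are in fact where the real work lies, and as written both fail. First, the martingale property of $Q^{\tau_n}$ does \emph{not} follow from boundedness of the coefficients on $\overline{E}_n$, because $Z$ is only known to lie in $\mathcal{M}^2$ and is not a function of $X$; your fallback ("argue directly that $Q^{\tau_n}$ is a supermartingale of constant expectation by the same Lyapunov estimate") is circular, since the Lyapunov estimate is carried out \emph{under} $\mathbb{Q}_n$ and so presupposes that $Q_{\tau_n}$ is a density. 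The paper closes this by first showing that $Y_{\cdot\wedge\tau_n}$ is bounded — the upper bound is global, and the lower bound in \eqref{eq: Y bdd} is controlled on $[0,\tau_n]$ via the Feynman–Kac representation $y(t,x)=\expec^{\overline{\prob}}[\int_t^T h(X_s)ds\,|\,X_t=x]$, continuous on the compact $\overline{E}_n$ — whence $\int_0^{\cdot\wedge\tau_n}Z_s\,dW_s$ is a $\BMO$-martingale by the standard quadratic-BSDE estimate, and Kazamaki's criterion gives the stopped martingale property. Some substitute for this $\BMO$ step is unavoidable.

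Second, and more seriously, the drift of $\phi(X_{\cdot\wedge\tau_n})$ under $\mathbb{Q}_n$ is \emph{not} $\fF[\phi]$ and is not bounded above by Assumption \ref{ass: phi}~ii). The Girsanov shift of $W$ is $L^{(1)}=\frac{1-\gamma}{\gamma}\mu'\Sigma^{-1}\sigma\rho+ZM$, so Itô applied to $\phi(X)$ alone produces the cross term $\nabla\phi' a M Z'$, which is linear in $Z$, has no sign, and cannot be absorbed into $\fF[\phi]$ (whose quadratic piece is $\frac12\nabla\phi' aMa'\nabla\phi$, not a bound for a term linear in an unbounded $Z$). The paper's resolution is to apply Itô to the \emph{difference} $Y_t-\phi(X_t)$: the BSDE contributes $\frac12 ZMZ'$, and completing the square gives $\frac12 ZMZ'-\nabla\phi'aMZ'+\frac12\nabla\phi'aMa'\nabla\phi=\frac12(Z-\nabla\phi'a)M(Z-\nabla\phi'a)'\ge 0$, after which Assumption \ref{ass: phi}~ii), the sign of $-\theta\frac{\delta^\psi}{\psi}e^{-\frac{\psi}{\theta}Y}$, and the upper bound on $Y$ (plus the $\BMO$ property again, to kill the $\qprob^n$-expectation of the stochastic integral) yield $\qprob^n(\tau_n<T)\to 0$. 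Without pairing $\phi(X)$ with $Y$ your estimate $\expec^{\mathbb{Q}_n}[\phi(X_{t\wedge\tau_n})]\le\phi(x)+KT$ has no justification, so the Markov-inequality conclusion does not follow as stated. (A minor further point: to pass from the $W$-component to the full exponential $Q$, which also integrates against $W^\bot$, the paper conditions on $\F^W$ as in \eqref{eq: mart}; in your stopped formulation this is subsumed once $Q^{\tau_n}$ is a true martingale, so it is not an independent gap.)
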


\begin{proof}
 It follows from \eqref{eq: opt st}, the definition of $W^\rho$ and $M$ that
 \begin{align*}
  (1- \gamma) (\pi^*)' \sigma dW^\rho + Z dW &= \pare{\frac{1-\gamma}{\gamma} \mu' \Sigma^{-1} \sigma \rho + Z M} dW +  \frac{1-\gamma}{\gamma} \pare{\mu' + Z \rho' \sigma'} \Sigma^{-1} \sigma \rho^{\bot} dW^\bot\\
  &=: L^{(1)} dW + L^{(2)} dW^\bot.
 \end{align*}
 Here we suppress time subscripts to simplify notation. First we claim that
 if $Q^{(1)}:=\mathcal{E}(\int L^{(1)}_sdW_s)$ is a martingale, so is $Q$. Indeed, for any $t\leq T$,
 \begin{equation}\label{eq: mart}
 \begin{split}
  \expec[Q_t]&= \expec\bra{\mathcal{E}\pare{\int L^{(1)}_s dW_s}_t \mathcal{E}\pare{\int L^{(2)}_s dW^\bot_s}_t}\\
  &= \expec\bra{\mathcal{E}\pare{\int L^{(1)}_s dW_s}_t \expec\bra{\left.\mathcal{E}\pare{\int L^{(2)}_s dW^\bot_s}_t\right| \F^W}}\\
  &= \expec\bra{\mathcal{E}\pare{\int L^{(1)}_s dW_s}_t}\\
  &=1.
 \end{split}
 \end{equation}
 Here $\F^W= \sigma(W_s; 0\leq s\leq T)$, the third identity follows from \cite[Lemma 4.8]{Karatzas-Kardaras} since $L^{(2)}$ and $W^\bot$ are independent, and the fourth identity is due to the martingale assumption on $Q^{(1)}$. In the remaining of the proof, we will prove the martingale property of $Q^{(1)}$.

 For the sequence of subdomains $(E_n)_n$ in Assumption \ref{ass: phi} i), define $\tau_n:= \inf\{t\geq 0\,|\, X_t \notin E_n\}\wedge T$. we first prove that $Y_{\cdot \wedge \tau_n}$ is bounded. Since we have seen in Proposition \ref{prop: existence} that $Y$ is bounded from above, it suffices to show  $\expec^{\overline{\prob}}_{\cdot \wedge \tau_n}\bra{\int_{\cdot \wedge \tau_n}^T h_s ds}$ is bounded from below. Then \eqref{eq: Y bdd} implies that $Y_{\cdot \wedge \tau_n}$ is bounded as well. Due to the Markovian structure, define
 \[
  y(t,x) := \expec^{\overline{\prob}}\bra{\left. \int_t^T h(X_s) \, ds \right| X_t = x}.
 \]
 The Feynman-Kac formula (see \cite{Heath-Schweizer} when the equation is not uniformly parabolic) implies that, under Assumption \ref{ass: coeff}, $y\in C^{1,2}([0,T]\times E)$ and it is the unique solution to
 \[
  \partial_t y + \mathcal{L} y + h =0, \quad y(T, x)=0,
 \]
 where $\mathcal{L}$ is the infinitesimal generator of $X$ under $\overline{\prob}$. Now since $\overline{E}_n$ is compact, the continuity of $y$ implies that $y(\cdot \wedge \tau_n, X_{\cdot \wedge \tau_n})$ is bounded.

 As a solution to \eqref{eq: op bsde}, $(Y, Z)$ satisfies
 \[
  Y_t = Y_{\tau_n} + \int_t^{\tau_n} H(s, Y_s, Z_s) ds -\int_t^{\tau_n} Z_s \, dW_s, \quad t\in[0, \tau_n].
 \]
 Since both $X_{\cdot \wedge \tau_n}$ and $Y_{\cdot \wedge \tau_n}$ are bounded, it follows from the \BMO-estimate for quadratic BSDEs (cf. eg. \cite[Lemma 3.1]{Morlais}) that $\int_0^{\cdot\wedge \tau_n} Z_s dW_s$ is a \BMO-martingale. Note that both $\mu' \Sigma^{-1} \sigma \rho(X_{\cdot \wedge \tau_n})$ and $M(X_{\cdot \wedge \tau_n})$ are bounded. Therefore $\int_0^{\cdot\wedge \tau_n} L^{(1)}_s dW_s$ is a \BMO-martingale as well. Then \cite[Theorem 2.3]{Kazamaki} implies that $\mathcal{E}(\int L^{(1)}_s dW_s)_{\cdot \wedge \tau_n}$ is a martingale. Therefore $d\qprob^n/d\prob := \mathcal{E}(\int L^{(1)}_s dW_s)_{\tau_n}$ defines $\qprob^n$ on $\F_{\tau_n}$ which is equivalent to $\prob$.

 Assuming that $\lim_{n\rightarrow \infty} \qprob^n(\tau_n < T)=0$, by the monotone convergence theorem,
 \begin{align*}
  \expec\bra{\mathcal{E}\pare{\int L^{(1)}_s dW_s}_{T}} &= \lim_{n\rightarrow \infty} \expec\bra{\mathcal{E}\pare{\int L^{(1)}_s dW_s}_{\tau_n} \indic_{\{\tau_n =T\}}} \\
  &= \lim_{n\rightarrow \infty} \expec\bra{\mathcal{E}\pare{\int L^{(1)}_s dW_s}_{\tau_n}} - \lim_{n\rightarrow \infty} \expec\bra{\mathcal{E}\pare{\int L^{(1)}_s dW_s}_{\tau_n}\indic_{\{\tau_n <T\}}}\\
  &= 1- \lim_{n\rightarrow \infty} \qprob^n(\tau_n < T) \\
  &=1,
 \end{align*}
 proving the martingale property of $\mathcal{E}(\int L^{(1)}_s dW_s)$ on $[0,T]$.

 It remains to prove $\lim_{n\rightarrow \infty} \qprob^n(\tau_n < T)=0$. To this end, \eqref{eq: op bsde} yields
 \[
  Y_t = Y_0 -\int_0^t H(s, Y_s, Z_s)ds + \int_0^t Z_s dW_s.
 \]
 On the other hand, recall $\fF$ from \eqref{eq: fF}, we have from It\^{o}'s formula,
 \begin{align*}
  \phi(X_t) &= \phi(x) + \int_0^t b' \nabla \phi(X_s) + \frac12 \sum_{i,j=1}^k A_{ij} \partial^2_{x_i x_j} \phi(X_s) \, ds + \int_0^t \nabla \phi' a(X_s) dW_s\\
  &= \phi(x) + \int_0^t \bra{\fF[\phi] - \frac12 \nabla \phi' a M a' \nabla \phi - h  - \frac{1-\gamma}{\gamma} \mu'\Sigma^{-1} \sigma \rho a' \nabla \phi} ds  + \int_0^t \nabla \phi' a(X_s) dW_s
 \end{align*}
 Taking difference of the previous two identities,
 \begin{align*}
  &Y_t - \phi(X_t) \\
  &= Y_0 - \phi(x) + \int_0^t \pare{Z_s - \nabla \phi' a(X_s)} dW_s\\
  &\quad - \int_0^t \bra{\frac12 Z M Z' - \frac12 \nabla \phi'a M a' \nabla \phi + \theta \frac{\delta^\psi}{\psi} e^{-\frac{\psi}{\theta} Y_t} -\delta \theta + \fF[\phi] + \frac{1-\gamma}{\gamma} \mu' \Sigma^{-1} \sigma \rho(Z-\nabla \phi' a)'} ds\\
  &= Y_0 - \phi(x) + \int_0^t \pare{Z_s - \nabla \phi' a(X_s)} d W^n_s\\
  & \quad - \int_0^t \bra{\frac12 Z M Z' - \frac12  \nabla \phi' a M a' \nabla \phi - (Z- \nabla \phi' a) M Z' +\theta \frac{\delta^\psi}{\psi} e^{-\frac{\psi}{\theta} Y_t} -\delta \theta + \fF[\phi]} ds\\
  &= Y_0 - \phi(x) + \int_0^t \pare{Z_s - \nabla \phi' a(X_s)} d W^n_s\\
  &\quad +\int_0^t \bra{\frac12 (Z- \nabla \phi' a) M (Z'- a\nabla \phi) - \theta \frac{\delta^\psi}{\psi} e^{-\frac{\psi}{\theta} Y_t} + \delta \theta - \fF[\phi]} ds, \quad t\leq \tau_n,
 \end{align*}
 where $W^n := W - \int_0^\cdot L^{(1)}_s ds$ is a $\qprob^n-$Brownian motion on $[0, \tau_n]$. On the right hand side, the quadratic term is nonnegative, $-\theta \frac{\delta^\psi}{\psi} e^{-\frac{\psi}{\theta} Y_t}$ is nonnegative since $\theta<0$, and $\delta \theta - \fF[\phi]$ is also bounded from below due to Assumption \ref{ass: phi} ii). Therefore, there exists some negative constant $C$ such that
 \begin{equation}\label{eq: Y taun lbb}
  Y_{\tau_n} - \phi(X_{\tau_n}) \geq Y_0 - \phi(x) + C \tau_n + \int_0^{\tau_n} (Z_s - \nabla \phi' a) dW^n_s.
 \end{equation}
 The stochastic integral on the right hand side has zero expectation under $\qprob^n$. Indeed, since $\int_0^{\cdot \wedge \tau_n} Z_s dW_s$ is a $\BMO(\prob)-$martingale and $\nabla \phi' a(X_{\cdot \wedge \tau_n})$ is bounded, hence $\int_0^{\cdot \wedge \tau_n}(Z_s - \nabla\phi' a(X_s)) dW_s$ is a $\BMO(\prob)-$martingale as well. Now since $\int_0^{\cdot \wedge \tau_n} L^{(1)}_s dW_s$ is a $\BMO(\prob)-$martingale, \cite[Theorem 3.6]{Kazamaki} implies that $\int_0^{\cdot \wedge \tau_n} (Z_s - \nabla \phi' a(X_s)) dW^n_s$ is a $\BMO(\qprob^n)-$martingale. Therefore its expectation under $\mathbb{Q}^n$ is zero. It then follows from \eqref{eq: Y taun lbb} that
 \begin{equation}\label{eq: exp Qn}
 \expec^{\qprob^n}\bra{Y_{\tau_n} -\phi(X_{\tau_n})}  \geq Y_0 - \phi(x) + CT>-\infty, \quad \text{ for each } n.
 \end{equation}
 Now since $Y$ is bounded from above and $\phi$ is bounded from below due to Assumption \ref{ass: phi} i), there exists a constant $C$, such that
 \[Y_{\tau_n} - \phi(X_{\tau_n}) = \pare{Y_{\tau_n} - \phi(X_{\tau_n})}\indic_{\{\tau_n <T\}} + \pare{Y_T - \phi(X_T)}\indic_{\{\tau_n = T\}} \leq C - \inf_{x\in \partial E_n} \phi(x) \,\indic_{\{\tau_n <T\}}.\] Now sending $n\rightarrow \infty$ in \eqref{eq: exp Qn}, Assumption \ref{ass: phi} i) and the previous inequality confirm that $\lim_{n\rightarrow \infty} \qprob^n(\tau_n < T)=0$.
\end{proof}

The martingale property in the previous result helps to verify the permissibility of $(\pi^*, c^*)$.

\begin{cor}\label{cor: opt class D}
 Let Assumptions \ref{ass: coeff}, \ref{ass: op bsde} and \ref{ass: phi} hold. Then $\pare{\mathcal{W}^{*}}^{1-\gamma} e^Y$ is of class $D$ on $[0,T]$, where $\mathcal{W}^*$ is the wealth process associated to $(\pi^*, c^*)$. 
\end{cor}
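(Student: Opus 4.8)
The plan is to represent $(\mathcal{W}^*)^{1-\gamma}e^Y$ as the product of the $\prob$-martingale $Q$ furnished by Lemma \ref{lem: martingale} with a process that is bounded above by a constant, and then to deduce the class $D$ property from the uniform integrability of the family $\{Q_\tau:\tau\in\mathcal{T}\}$.

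First I would identify the dynamics of $(\mathcal{W}^*)^{1-\gamma}e^Y$. The wealth equation \eqref{eq: wealth} driven by $(\pi^*,c^*)$ is \emph{linear} in $\mathcal{W}^*$, because $c^*=\tilde c^*\mathcal{W}^*$ with $\tilde c^*=\delta^\psi e^{-\frac{\psi}{\theta}Y}$ bounded (here $-\psi/\theta>0$ and $Y$ is bounded above by Proposition \ref{prop: existence}) and all model coefficients are continuous along the non-exploding path of $X$; hence $\mathcal{W}^*>0$ and $(\mathcal{W}^*)^{1-\gamma}e^Y$ is nonnegative. Applying It\^o's formula, using that $Y$ solves \eqref{eq: op bsde} with $Z^\bot=0$, the definition \eqref{eq: H} of $H$, and the first-order conditions \eqref{eq: opt st}, the drift produced by the It\^o computation behind \eqref{eq: R drift} collapses: since $H$ in \eqref{eq: H} makes the bracketed expression in \eqref{eq: R drift} vanish at $(\pi^*,\tilde c^*)$, the process $R^{\pi^*,c^*}$ of Lemma \ref{lem: value ubb} is a local martingale, so the drift of $\frac{(\mathcal{W}^*_t)^{1-\gamma}}{1-\gamma}e^{Y_t}$ equals $-f(c^*_t,V^*_t)$, which by \eqref{eq: EZ agg} and \eqref{eq: opt st} equals $\frac{(\mathcal{W}^*_t)^{1-\gamma}}{1-\gamma}e^{Y_t}\bigl(\delta\theta-\theta\delta^\psi e^{-\frac{\psi}{\theta}Y_t}\bigr)$. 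Combined with the martingale part of the product, this gives
\[
 d\bigl[(\mathcal{W}^*_t)^{1-\gamma}e^{Y_t}\bigr]=(\mathcal{W}^*_t)^{1-\gamma}e^{Y_t}\Bigl(\delta\theta-\theta\delta^\psi e^{-\frac{\psi}{\theta}Y_t}\Bigr)dt+(\mathcal{W}^*_t)^{1-\gamma}e^{Y_t}\bigl((1-\gamma)(\pi^*_t)'\sigma_t\,dW^\rho_t+Z_t\,dW_t\bigr),
\]
a linear SDE whose solution is $(\mathcal{W}^*_t)^{1-\gamma}e^{Y_t}=w^{1-\gamma}e^{Y_0}\exp\!\bigl(\int_0^t g_s\,ds\bigr)\,Q_t$, where $g_t:=\delta\theta-\theta\delta^\psi e^{-\frac{\psi}{\theta}Y_t}$ and $Q$ is precisely the stochastic exponential appearing in Lemma \ref{lem: martingale}.

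Next I would bound $g$ from above. Since $\theta<0$ we have $-\psi/\theta>0$, and Proposition \ref{prop: existence} gives $Y_t\le (h_{max}-\delta\theta)T$; hence $e^{-\frac{\psi}{\theta}Y_t}$ is dominated by a constant depending only on the parameters and $T$, so $g_t\le K$ for some finite constant $K$. Consequently $\int_0^t g_s\,ds\le K^+T$ for every $t\in[0,T]$, and therefore
\[
 0\le (\mathcal{W}^*_t)^{1-\gamma}e^{Y_t}\le w^{1-\gamma}e^{Y_0}e^{K^+T}\,Q_t,\qquad t\in[0,T].
\]

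Finally, since $Q$ is a $\prob$-martingale on $[0,T]$ by Lemma \ref{lem: martingale}, it is closed by $Q_T$, so $Q_\tau=\expec[Q_T\,|\,\F_\tau]$ for every $\tau\in\mathcal{T}$ and the family $\{Q_\tau:\tau\in\mathcal{T}\}$ is uniformly integrable. As $\{(\mathcal{W}^*_\tau)^{1-\gamma}e^{Y_\tau}:\tau\in\mathcal{T}\}$ is dominated by a constant multiple of this family, it is uniformly integrable as well, i.e.\ $(\mathcal{W}^*)^{1-\gamma}e^Y$ is of class $D$. The only substantive ingredient is the martingale property of $Q$, already established in Lemma \ref{lem: martingale}; the remaining steps — the drift identification and the elementary a priori upper bound on $Y$ — present no real obstacle, so I expect the argument to go through cleanly.
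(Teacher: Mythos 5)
Your argument is correct and is essentially the paper's own proof: both identify the linear SDE satisfied by $(\mathcal{W}^*)^{1-\gamma}e^Y$ via the vanishing of the drift in \eqref{eq: R drift} at $(\pi^*,\tilde c^*)$, write the solution as a bounded exponential factor times the stochastic exponential $Q$ of Lemma \ref{lem: martingale}, and conclude class $D$ from the uniform integrability of the closed martingale $Q$. No gaps.
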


\begin{proof}
 The calculation leading to \eqref{eq: H} yields
 \begin{align*}
  &d\pare{\mathcal{W}^*_t}^{1-\gamma} e^{Y_t} \\
  &= - \pare{\mathcal{W}^*_t}^{1-\gamma} e^{Y_t} \pare{\delta \theta \pare{c^*_s}^{1-\frac{1}{\psi}} \pare{\pare{\mathcal{W}^*_t}^{1-\gamma} e^{Y_t}}^{-\frac{1}{\theta}}- \delta \theta} dt + \pare{\mathcal{W}^*_t}^{1-\gamma} e^{Y_t} \bra{(1-\gamma) (\pi^*_t)' \sigma_t dW^\rho_t + Z_t dW_t}\\
  &= -\pare{\mathcal{W}^*_t}^{1-\gamma} e^{Y_t} \bra{\theta \delta^\psi e^{-\frac{\psi}{\theta}Y_t} -\delta \theta} dt + \pare{\mathcal{W}^*_t}^{1-\gamma} e^{Y_t} \bra{(1-\gamma) (\pi^*_t)' \sigma_t dW^\rho_t + Z_t dW_t},
 \end{align*}
 where the second identity follows from the form of $c^*$ in \eqref{eq: opt st}. Therefore,
 \[
  \pare{\mathcal{W}_t^*}^{1-\gamma} e^{Y_t} = w^{1-\gamma} e^{Y_0} \exp \pare{-\int_0^t \pare{\delta^\psi \theta e^{-\frac{\psi}{\theta} Y_s} -\delta \theta}ds} \mathcal{E} \pare{\int (1-\gamma) (\pi^*_s)' \sigma_s dW^\rho_s + \int Z_s dW_s}_t.
 \]
 Since $\theta<0$ and $Y$ is bounded from above, the second exponential term on the right is bounded, uniformly in $t$. Meanwhile, due to Lemma \ref{lem: martingale}, the stochastic exponential on the right is of class $D$ on $[0,T]$. The statement is then confirmed.
\end{proof}

\begin{lem}\label{lem: c admissible}
 Let Assumptions \ref{ass: coeff}, \ref{ass: op bsde}, \ref{ass: phi}, and \ref{ass: Q^0} hold. Let $c^*$ be in \eqref{eq: opt st} and $c_T = \mathcal{W}^{*}_T$. Then $c^* \in \C_a$.
\end{lem}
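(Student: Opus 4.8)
The plan is to check the two conditions defining $\C_a$ separately. The terminal one is immediate: since $c_T=\mathcal{W}^*_T$ and $Y_T=0$, Corollary \ref{cor: opt class D} evaluated at the deterministic time $T$ gives $\expec[c_T^{1-\gamma}]=\expec[(\mathcal{W}^*_T)^{1-\gamma}e^{Y_T}]<\infty$. For the running condition, I would first reduce it to a moment estimate on $\mathcal{W}^*$. From \eqref{eq: opt st}, $c^*_s=\delta^\psi e^{-\frac\psi\theta Y_s}\mathcal{W}^*_s$, so $(c^*_s)^{1-1/\psi}=\delta^{\psi-1}e^{-\frac{\psi-1}{\theta}Y_s}(\mathcal{W}^*_s)^{1-1/\psi}$; since $\gamma,\psi>1$ force $\theta<0$, the exponent $-\frac{\psi-1}{\theta}$ is positive, and $Y$ is bounded from above by Proposition \ref{prop: existence}, whence $e^{-\frac{\psi-1}{\theta}Y_s}$ is bounded by a constant. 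Using $e^{-\delta s}\le1$ and Tonelli's theorem, it therefore suffices to prove $\sup_{s\le T}\expec\big[(\mathcal{W}^*_s)^{1-1/\psi}\big]<\infty$.

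For this I would work under the minimal martingale measure $\qprob^0$ of Assumption \ref{ass: Q^0}. Under $\qprob^0$ the process $W^0=W^\rho+\int_0^\cdot\lambda_s\,ds$ is Brownian and the drift of $\mathcal{W}^*$ in \eqref{eq: wealth} collapses to $r_s-\tilde{c}^*_s$, because the $(\pi^*)'\mu$ term in the return is cancelled by the Girsanov drift $(\pi^*)'\sigma\lambda$. Hence $\mathcal{W}^*_t e^{-\int_0^t r_u du}+\int_0^t e^{-\int_0^u r_v dv}c^*_u\,du$ is a nonnegative $\qprob^0$-local martingale, so a supermartingale, and $\expec^{\qprob^0}\big[\mathcal{W}^*_s e^{-\int_0^s r_u du}\big]\le w$ for every $s$. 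Since $\frac{d\prob}{d\qprob^0}\big|_{\F_s}=\mathcal{E}\big(\int\lambda'_u dW^0_u\big)_s$, a change of measure gives $\expec[(\mathcal{W}^*_s)^{1-1/\psi}]=\expec^{\qprob^0}\big[\big(\mathcal{W}^*_s e^{-\int_0^s r_u du}\big)^{1-1/\psi} e^{(1-1/\psi)\int_0^s r_u du}\,\mathcal{E}(\int\lambda'dW^0)_s\big]$; applying H\"older's inequality with exponents $\tfrac{\psi}{\psi-1}$ and $\psi$ together with the first-moment bound above yields
\[
\expec\big[(\mathcal{W}^*_s)^{1-1/\psi}\big]\le w^{\frac{\psi-1}{\psi}}\Big(\expec^{\qprob^0}\big[e^{(\psi-1)\int_0^s r_u du}\,\mathcal{E}(\int\lambda'dW^0)_s^{\psi}\big]\Big)^{1/\psi}.
\]

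It then remains to dominate the $\qprob^0$-expectation on the right by its value at $T$. Since $r\le r_+$ and $\psi-1>0$, one has $e^{(\psi-1)\int_0^s r_u du}\le e^{(\psi-1)\int_0^s r_+(X_u)du}$; moreover $\mathcal{E}(\int\lambda'dW^0)$ is a genuine $\qprob^0$-martingale, so conditional Jensen applied to the convex map $x\mapsto x^\psi$ gives $\mathcal{E}(\int\lambda'dW^0)_s^\psi\le\expec^{\qprob^0}_s\big[\mathcal{E}(\int\lambda'dW^0)_T^\psi\big]$; and $e^{(\psi-1)\int_0^s r_+}\le e^{(\psi-1)\int_0^T r_+}$ because $r_+\ge0$. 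Combining these (and pulling out the $\F_s$-measurable factor before conditioning) bounds $\expec[(\mathcal{W}^*_s)^{1-1/\psi}]$ by $w^{\frac{\psi-1}{\psi}}\big(\expec^{\qprob^0}[e^{(\psi-1)\int_0^T r_+(X_u)du}\mathcal{E}(\int\lambda'(X_u)dW^0_u)_T^\psi]\big)^{1/\psi}$, a finite constant independent of $s$ by \eqref{eq: Q^0 int}; this closes the argument.

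I expect the main obstacle to be the change-of-measure bookkeeping in the last two steps: the crux is that under $\qprob^0$ the wealth drift reduces to $r_s-\tilde{c}^*_s$, so that dropping the nonnegative consumption term turns the self-financing identity into a clean first-moment supermartingale bound, and that the H\"older exponents are tuned so the surviving factor is exactly the quantity whose integrability is assumed in \eqref{eq: Q^0 int}. The passage from the time-$s$ to the time-$T$ version is where the nonnegativity of $r_+$ and the true (not merely local) $\qprob^0$-martingale property of $\mathcal{E}(\int\lambda'dW^0)$ are used. Both the upper bound on $Y$ from Proposition \ref{prop: existence} and Assumption \ref{ass: Q^0} are essential, since they are precisely what keeps positive moments of $\mathcal{W}^*$ — equivalently negative moments of $(\mathcal{W}^*)^{1-\gamma}e^Y$ from Corollary \ref{cor: opt class D} — under control.
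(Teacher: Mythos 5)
Your proposal is correct and follows essentially the same route as the paper: reduce to a moment bound on $(\mathcal{W}^*_s)^{1-1/\psi}$ using the upper bound on $Y$, change to $\qprob^0$, apply H\"older with exponents $\psi$ and $\psi/(\psi-1)$, and use the $\qprob^0$-supermartingale property of discounted wealth together with \eqref{eq: Q^0 int}. The only (harmless) difference is bookkeeping: the paper inserts the full time-$T$ density $\mathcal{E}(\int\lambda' dW^0)_T$ directly when changing measure for the $\F_s$-measurable integrand, which makes your conditional-Jensen step from $\mathcal{E}(\cdot)_s^\psi$ to $\expec^{\qprob^0}_s[\mathcal{E}(\cdot)_T^\psi]$ unnecessary.
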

\begin{proof}
 Since $Y_T=0$, the class $D$ property of $\pare{\mathcal{W}^*}^{1-\gamma} e^Y$ in Corollary \ref{cor: opt class D} yields $\expec[\pare{\mathcal{W}_T^{*}}^{1-\gamma}]<\infty$. On the other hand, the expression of $\tilde{c}^*$ in \eqref{eq: opt st} implies
 \[
  e^{-\delta s} (c^*_s)^{1-\frac{1}{\psi}} = e^{-\delta s} \delta^{\psi - 1} e^{- \frac{\psi-1}{\theta} Y_s} \pare{\mathcal{W}_s^{*}}^{1-\frac{1}{\psi}}.
 \]
 Since $\psi>1$, $\theta<0$, and $Y$ is bounded from above, the first three terms on the right hand side are bounded. Therefore it suffices to prove
 \begin{equation}\label{eq: W int}
  \expec\bra{\int_0^T \pare{\mathcal{W}_s^{*}}^{1-\frac{1}{\psi}} ds} <\infty.
 \end{equation}

 To this end, it follows from Assumption \ref{ass: Q^0} that
 \begin{align*}
  \expec\bra{\int_0^T \pare{\mathcal{W}_s^{*}}^{1-\frac{1}{\psi}} ds}  & = \int_0^T \expec^{\qprob^0}\bra{e^{\pare{1-\frac1\psi}\int_0^s r_u du} \mathcal{E}\pare{\int \lambda_u' dW_u^0}_T e^{-\pare{1-\frac{1}{\psi}}\int_0^s r_u du}\pare{\mathcal{W}^*_s}^{1-\frac{1}{\psi}}} ds\\
  & \leq \int_0^T \expec^{\qprob^0}\bra{e^{\pare{1-\frac1\psi}\int_0^T(r_u)_+ du} \mathcal{E}\pare{\int \lambda_u' dW_u^0}_T e^{-\pare{1-\frac1\psi}\int_0^s r_u du}\pare{\mathcal{W}^*_s}^{1-\frac{1}{\psi}}}ds\\
  &\leq \expec^{\qprob^0}\bra{e^{(\psi-1) \int_0^T (r_u)_+ du} \mathcal{E} \pare{\int \lambda_u' dW^0_u}^\psi_T}^{\frac1\psi} \int_0^T \expec^{\qprob^0}\bra{e^{-\int_0^s r_u du} \mathcal{W}^*_s}^{1-\frac1\psi}ds\\
  &\leq w^{1-\frac1\psi} T \,\expec^{\qprob^0}\bra{e^{(\psi-1) \int_0^T (r_u)_+ du} \mathcal{E} \pare{\int \lambda_u' dW^0_u}^\psi_T}^{\frac1\psi}\\
  &<\infty.
 \end{align*}
 Here the first inequality follows from $\psi>1$; the second inequality holds due to H\"{o}lder's inequality; the third inequality is obtained using the fact that $e^{-\int_0^\cdot r_s ds} \mathcal{W}^*$ is a nonnegative $\qprob^0-$local martingale, hence a $\qprob^0-$supermrtingale; and the fourth inequality holds thanks to \eqref{eq: Q^0 int}.
\end{proof}

Now we are ready to prove the first main result.

\begin{proof}[Proof of Theorem \ref{thm: verification}]
 Corollary \ref{cor: opt class D} and Lemma \ref{lem: c admissible} have already shown that $(\pi^*, c^*)$ is permissible. Choosing $(\pi^*, c^*)$, we have from \eqref{eq: R drift}, \eqref{eq: H} and $Y_T=0$ that
 \[
  \frac{(\mathcal{W}^*_t)^{1-\gamma}}{1-\gamma} e^{Y_t} = \frac{(\mathcal{W}^*_T)^{1-\gamma}}{1-\gamma} + \int_t^T f\pare{c^*_s, \frac{(\mathcal{W}^*_s)^{1-\gamma}}{1-\gamma}} ds -\int_t^T Z_s dB_s,
 \]
 for some $Z$. Then the class $D$ property of $\pare{\mathcal{W}^*}^{1-\gamma} e^Y$ and Proposition \ref{prop: existence} combined imply
 \begin{equation}\label{eq: V* resp}
  \frac{w^{1-\gamma}}{1-\gamma} e^{Y_0} = \expec\bra{\int_0^T f\pare{c^*_s, \frac{(\mathcal{W}_s^*)^{1-\gamma}}{1-\gamma}e^{Y_t}} \,ds + \frac{(\mathcal{W}_T^*)^{1-\gamma}}{1-\gamma}}.
 \end{equation}
 Therefore the upper bound in Lemma \ref{lem: value ubb} is attained by $(\pi^*, c^*)$.
\end{proof}

Finally, we prove Lemma \ref{lem: deflator} and Theorem \ref{thm: state price}.

\begin{proof}[Proof of Lemma \ref{lem: deflator}]
 Calculation using \eqref{eq: wealth} and \eqref{eq: D sde} shows that $\mathcal{W} D^* + \int_0^\cdot D^*_s c_s ds$ is a local martingale.
 It then remains to prove \eqref{eq: D sde}. To ease notation, suppress all time subscripts. Using \eqref{eq: op bsde} and \eqref{eq: opt st}, calculation shows
 \begin{align*}
  d (\mathcal{W}^*)^{-\gamma} =& (\mathcal{W}^*)^{-\gamma} \bra{-\gamma(r  - \tilde{c}^* + (\pi^*)' \mu) + \frac{\gamma(\gamma+1)}{2} (\pi^*)' \Sigma \pi^*}dt - \gamma (\mathcal{W}^*)^{-\gamma} (\pi^*)' \sigma dW^\rho\\
  =& (\mathcal{W}^*)^{-\gamma} \bra{-\gamma (r -\tilde{c}^*) + \frac{1-\gamma}{2\gamma} \mu' \Sigma^{-1} \mu + \frac{1}{\gamma} \mu' \Sigma^{-1} \sigma \rho Z' + \frac{1+\gamma}{2\gamma} Z \rho' \sigma' \Sigma^{-1} \sigma \rho Z'} dt \\
  &- \gamma (\mathcal{W}^*)^{-\gamma} (\pi^*)' \sigma dW^\rho\\
  de^Y =& e^Y \bra{-H(t, Y, Z) + \frac12 ZZ'} dt + e^Y Z dW.
 \end{align*}
 Combining the previous two identities, \eqref{eq: state price tran}, and the expression for $\tilde{c}^*$ in \eqref{eq: opt st}, we confirm
 \begin{align*}
  dD^* = & D^* \left[-\gamma (r -\tilde{c}^*) + (\theta -1) \delta^\psi e^{-\frac{\psi}{\theta} Y} - \delta \theta\right.\\
  & \hspace{7mm} \left.+ \frac{1-\gamma}{\gamma} \mu' \Sigma^{-1}\mu + \frac{1-\gamma}{\gamma} \mu' \Sigma^{-1} \sigma \rho Z' + \frac12 Z M Z' - H(t, Y, Z)\right] dt\\
  &+ D^* \bra{-\gamma (\pi^*)' \sigma dW^\rho + Z dW}\\
  =& D^* \bra{-r + \pare{\theta -1 - \frac{\theta}{\psi} + \gamma} \delta^\psi e^{-\frac{\psi}{\theta} Y}} dt + D^* \bra{-\gamma (\pi^*)' \sigma dW^\rho + Z dW}\\
  =& -r D^* dt + D^* \bra{-\gamma (\pi^*)' \sigma dW^\rho + Z dW},
 \end{align*}
 where the third identity follows from $\theta + \gamma -1 - \frac{\theta}{\psi} = 0$.
\end{proof}

\begin{proof}[Proof of Theorem \ref{thm: state price}]
 It follows from \eqref{eq: opt st} and \eqref{eq: state price tran} that
 \begin{equation}\label{eq: WD+int}
  \mathcal{W}^*_t D^*_t + \int_0^t D^*_s c^*_s ds = C_t  (\mathcal{W}_t^*)^{1-\gamma} e^{Y_t} + \int_0^t C_s  \delta^\psi e^{-\frac{\psi}{\theta} Y_s} (\mathcal{W}_s^*)^{1-\gamma} e^{Y_s} \, ds.
 \end{equation}
 Here $C_t= w^{\gamma}e^{-Y_0} \exp\bra{\int_0^t (\theta-1) \delta^{\psi} e^{-\frac{\psi}{\theta} Y_u} du-\delta \theta t}$, $t\in [0,T]$. Since $\theta<0$, $C$ is bounded from above by a constant. We have already seen in Lemma \ref{lem: deflator} that $\mathcal{W}^* D^* + \int_0^\cdot D^*_s c^*_s ds$ is a nonnegative local martingale. It suffices to prove that it is of class $D$. To this end, it follows from \eqref{eq: V* resp} that
 \begin{align*}
  &\expec\bra{\int_0^T \delta \frac{(c^*_s)^{1-\frac{1}{\psi}}}{1-\frac{1}{\psi}} \pare{\pare{\mathcal{W}^*_s}^{1-\gamma}e^{Y_s}}^{1-\frac{1}{\theta}}ds}\\
  &= \frac{w^{1-\gamma}}{1-\gamma} e^{Y_0} - \frac{1}{1-\gamma} \expec\bra{\pare{\mathcal{W}^*_T}^{1-\gamma}} + \frac{\delta}{1-\frac{1}{\psi}} \int_0^T\expec\bra{ \pare{\mathcal{W}^*_s}^{1-\gamma} e^{Y_s} }ds\\
  &<\infty.
 \end{align*}
 Here since $\pare{\mathcal{W}^*}^{1-\gamma} e^Y$ is of class $D$, $\expec\bra{ \pare{\mathcal{W}^*_s}^{1-\gamma} e^{Y_s} }$ is bounded uniformly in $s$. Therefore the previous inequality holds. On the other hand, using the expression of $c^*$ in \eqref{eq: opt st},
 \[
  \expec\bra{\int_0^T \delta \frac{(c^*_s)^{1-\frac{1}{\psi}}}{1-\frac{1}{\psi}} \pare{\pare{\mathcal{W}^*_s}^{1-\gamma}e^{Y_s}}^{1-\frac{1}{\theta}}ds} = \frac{\delta^\psi}{1-\frac{1}{\psi}} \expec \bra{\int_0^T \pare{\mathcal{W}^*_s}^{1-\gamma} e^{(1-\frac{\psi}{\theta}) Y_s} ds}.
 \]
 Then $\psi>1$ and the previous two equations combined yield that the second term on the right hand side of \eqref{eq: WD+int} is bounded from above by an integrable random variable, hence is of class $D$. Meanwhile, using the class $D$ property of $\pare{\mathcal{W}^*}^{1-\gamma} e^Y$ again, the first term on the right of \eqref{eq: WD+int} is also of class $D$. This confirms the class $D$ property of $\mathcal{W}^* D^* + \int_0^\cdot D^*_s c^*_s ds$.
\end{proof}

\section{Proofs in Section \ref{sec: application}}
To prove Proposition \ref{prop: Heston}, let us recall the following result on the Laplace transform of integrated square root process; cf. \cite[Equation (2.k)]{Pitman-Yor} or \cite[Equation (3.2)]{CGMY}.

\begin{lem}\label{lem: int CIR}
 Consider $X$ with dynamics
 \[
  dX_t = (\vartheta- \kappa X_t)dt + a \sqrt{X_t} dW_t,
 \]
 where $W$ is a $1-$dimensional Brownian motion. When
 \[
  q < \frac{\kappa^2}{2a^2},
 \]
 the Laplace transform
 \[
  \expec\bra{\left.\exp\pare{q\int_0^T X_s ds} \right| X_0=x}
 \]
 is well-defined for any $T\geq 0$.
\end{lem}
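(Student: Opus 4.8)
The plan is to exploit the affine structure of the square-root process: first build an explicit candidate for the Laplace transform from the associated Riccati equation, and then verify it by a supermartingale argument. Fix $T>0$ and look for a solution of the linear parabolic equation
\[
 \partial_t v + (\vartheta - \kappa x)\partial_x v + \tfrac12 a^2 x\,\partial_{xx}v + q x\,v = 0, \qquad v(T,x)=1,
\]
of the affine form $v(t,x)=\exp\!\big(\alpha(T-t)+\beta(T-t)\,x\big)$. Substituting this ansatz and matching the coefficients of $1$ and of $x$ reduces the equation, in the time-to-maturity variable $s=T-t$, to the decoupled system
\[
 \dot\beta(s) = \tfrac12 a^2\beta(s)^2 - \kappa\beta(s) + q, \qquad \dot\alpha(s) = \vartheta\,\beta(s), \qquad \beta(0)=\alpha(0)=0 .
\]

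The heart of the proof is to show that $\beta$, hence also $\alpha(s)=\vartheta\int_0^s\beta(u)\,du$, stays bounded on $[0,T]$. The right-hand side of the Riccati equation is the upward parabola $p(y)=\tfrac12 a^2 y^2-\kappa y+q$, whose discriminant is $\kappa^2-2a^2 q$, and this is strictly positive precisely under the hypothesis $q<\kappa^2/(2a^2)$. In that case $p$ has two distinct real zeros, which are equilibria of the ODE; since $\beta$ starts from $\beta(0)=0$, a standard comparison/trapping argument (which uses $\kappa\ge 0$, so that $0$ and the two zeros are suitably ordered, as is the case in the applications of this lemma) shows that $\beta$ is monotone and confined to the bounded interval bracketed by $0$ and the nearer zero; hence $\beta$ is globally bounded on $[0,T]$. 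Equivalently, when $\kappa^2-2a^2 q>0$ the Riccati equation admits its classical closed form built from $\cosh$ and $\sinh$ of $\tfrac12\sqrt{\kappa^2-2a^2 q}\,s$, whose denominator stays away from zero on $[0,T]$; the excluded regime $q\ge\kappa^2/(2a^2)$ is exactly the one in which that denominator can vanish in finite time. In either formulation, $v\in C^{1,2}([0,T]\times(0,\infty))$ is well defined, strictly positive, locally bounded, and solves the displayed equation.

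It remains to compare $v(0,x)$ with the Laplace transform. Put $M_t:=v(t,X_t)\exp\!\big(q\int_0^t X_s\,ds\big)$ for $t\in[0,T]$. Since $X$ never leaves $(0,\infty)$ and $\int_0^T X_s\,ds<\infty$ almost surely, It\^{o}'s formula together with the equation gives $dM_t = M_t\,\beta(T-t)\,a\sqrt{X_t}\,dW_t$, so $M$ is a nonnegative continuous local martingale, hence a supermartingale. Because $v(T,\cdot)\equiv 1$ we have $M_T=\exp\!\big(q\int_0^T X_s\,ds\big)$, and therefore
\[
 \expec\bra{\left.\exp\pare{q\int_0^T X_s\,ds}\,\right| X_0=x} = \expec[M_T]\le M_0 = e^{\alpha(T)+\beta(T)x}<\infty ,
\]
which is exactly the asserted well-definedness; if the exact identity is wanted instead, one upgrades $M$ to a true martingale by a class-$D$ domination. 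The main obstacle is the non-explosion of the Riccati ODE in the middle step: this is where the threshold $q<\kappa^2/(2a^2)$ is used essentially, and it is really the entire content of the lemma, the surrounding It\^{o} and PDE bookkeeping being routine; it is also the computation underlying the references cited just before the statement.
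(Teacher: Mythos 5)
Your proof is correct, but it is worth noting that the paper does not actually prove this lemma: it is recalled by citation from Pitman--Yor and Carr--Geman--Madan--Yor, where the Laplace transform is computed in closed form. You instead supply a self-contained verification. The affine ansatz reduces the Feynman--Kac equation to the Riccati system; the hypothesis $q<\kappa^2/(2a^2)$ is exactly strict positivity of the discriminant of $p(y)=\tfrac12 a^2y^2-\kappa y+q$; the trapping argument keeps $\beta$ (hence $\alpha$) bounded on $[0,T]$; and the supermartingale inequality $\expec[M_T]\le M_0=e^{\alpha(T)+\beta(T)x}$ delivers finiteness without having to upgrade $M$ to a true martingale, which would be needed only for the exact identity. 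Since the lemma asserts only well-definedness, this one-sided argument is exactly the right amount of work, and it is more transparent than invoking the explicit $\cosh$/$\sinh$ formula.

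Two small remarks. First, your parenthetical about $\kappa\ge 0$ is not cosmetic: if $\kappa<0$ and $0<q<\kappa^2/(2a^2)$, both roots of $p$ are negative, $\beta$ starts above the larger root, increases, and explodes in finite time, and correspondingly the Laplace transform is genuinely infinite for large $T$. So the lemma as literally stated implicitly assumes $\kappa>0$ (or $q\le 0$, where finiteness is trivial), and your proof correctly isolates where this enters; the paper's bare citation glosses over it. Second, the assertion that $X$ never leaves $(0,\infty)$ is neither needed nor always true in the paper's own applications (for the auxiliary process $Y=X^2$ in the linear-diffusion example one has $\vartheta=a^2$ with diffusion coefficient $2a$, so Feller's condition fails and $0$ is attainable); but since $v(t,x)=e^{\alpha(T-t)+\beta(T-t)x}$ is smooth on all of $\Real$ and the diffusion coefficient vanishes at $x=0$, It\^{o}'s formula applies on the closed half-line and your argument is unaffected.
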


\begin{proof}[Proof of Proposition \ref{prop: Heston}]
  Assumptions \ref{ass: op bsde}, \ref{ass: phi}, and \ref{ass: Q^0} are verified in what follows. We denote $\sigma(x) = \sqrt{x}\sigma$, $\Sigma(x)= x \Sigma$, $b(x) = b(\ell - x)$, $a(x) = a\sqrt{x}$, and $\Theta = \sigma' \Sigma^{-1} \sigma$.

  \vspace{2mm}

  \noindent{\underline{Assumption \ref{ass: op bsde}}:} Note $\frac{1-\gamma}{\gamma} \mu'(x) \Sigma^{-1}(x) \sigma(x) \rho(x) =\frac{1-\gamma}{\gamma}  \lambda' \Theta \rho \sqrt{x}$. Consider the martingale problem associated to $\overline{\mathcal{L}}:= \bra{b\ell - \pare{b-\frac{1-\gamma}{\gamma} a \lambda' \Theta\rho} x}\partial_x + \frac12 a^2 x \partial^2_x$ on $(0,\infty)$. Since $b\ell > \frac12 a^2$, Feller's test of explosion implies that the previous martingale problem is well-posed. Then \cite[Remark 2.6]{Cheridito-Filipovic-Yor} implies that the stochastic exponential in Assumption \ref{ass: op bsde} i) is a $\prob-$martingale, hence $\overline{\prob}$ is well defined. For Assumption \ref{ass: op bsde} ii), $h(x) = (1-\gamma) r_0 + \bra{(1-\gamma) r_1 + \frac{1-\gamma}{2\gamma} \lambda'\Theta\lambda}x$. Since $X$ has the following dynamics under $\overline{\prob}$:
  \[
   dX_t = \bra{b\ell - \pare{b- \frac{1-\gamma}{\gamma}a \lambda' \Theta\rho}X_t} + a \sqrt{X_t} d\overline{W}_t,
  \]
  where $\overline{W}$ is a $\overline{\prob}-$Brownian motion. Then $\expec^{\overline{\prob}}[\int_0^T h(X_s) ds]>-\infty$ follows from the fact that $\expec^{\overline{\prob}}[X_s]$ is bounded uniformly for $s\in[0,T]$.

  \vspace{2mm}

  \noindent{\underline{Assumption \ref{ass: phi}}:} The operator $\fF$ in \eqref{eq: fF} reads
  \[
   \fF[\phi] = \frac12 a^2 x \partial^2_x \phi + \pare{b\ell - bx + \frac{1-\gamma}{\gamma} a \lambda' \Theta\rho x} \partial_x \phi +\frac12 \tilde{M} a^2 x (\partial_x \phi)^2 + (1-\gamma) (r_0+ r_1 x) + \frac{1-\gamma}{2\gamma} \lambda'\Theta\lambda x,
  \]
  where $\tilde{M} = 1+ \frac{1-\gamma}{\gamma} \rho' \Theta\rho>0$. Consider $\phi(x) = -\underline{c} \log x + \overline{c} x$, for two positive constants $\underline{c}$ and $\overline{c}$ determined later. It is clear that $\phi(x)\uparrow \infty$ when $x\downarrow 0$ or $x\uparrow \infty$. On the other hand, calculation shows
  \begin{align*}
   \fF[\phi] =& C +  \bra{\frac12 a^2\underline{c} + \frac12 a^2\underline{c}^2 \tilde{M} - b\ell \underline{c}} \frac1x \\
   &+ \bra{-\pare{b- \frac{1-\gamma}{\gamma} a \lambda' \Theta\rho} \overline{c} + \frac12 a^2 \overline{c}^2 \tilde{M} + (1-\gamma) r_1 + \frac{1-\gamma}{2\gamma} \lambda' \Theta\lambda}x,
  \end{align*}
  where $C$ is a constant. Since $b \ell >\frac12 a^2$, the coefficient of $1/x$ is negative for sufficiently small $\underline{c}$. When $r_1$ or $\lambda'\Theta\lambda>0$, since $\gamma>1$, the coefficient of $x$ is negative for sufficiently small $\overline{c}$. Therefore, these choices of $\underline{c}$ and $\overline{c}$ imply that $\fF[\phi](x) \downarrow -\infty$ when $x\downarrow 0$ or $x\uparrow \infty$, hence $\fF[\phi]$ is bounded from above on $\Real$, verifying Assumption \ref{ass: phi}.

  \vspace{2mm}
  \noindent{\underline{Assumption \ref{ass: Q^0}}:} Consider the martingale problem associated to $\mathcal{L}^0:= \bra{b\ell - bx - a \rho' \lambda x} \partial_x + + \frac12 a^2 x \partial^2_x$ on $(0,\infty)$. Since $b\ell >\frac12 a^2$, Feller's test of explosion implies that this martingale problem is well-posed and its solution, denoted by $\qprob^\rho$, satisfies $\frac{d\qprob^\rho}{d\prob} = \mathcal{E}\pare{\int-\lambda' \rho \sqrt{X_s} dW_s}_T$. Define $\qprob^0$ via
  \[
   \frac{d\qprob^0}{d\prob} :=  \mathcal{E}\pare{-\int\lambda' \rho \sqrt{X_s} dW_s-\int\lambda' \rho^\bot \sqrt{X_s} dW_s^\bot}_T= \mathcal{E}\pare{\int -\lambda' \sqrt{X_s} dW^\rho_s}_T.
  \]
  Here, due to the independence between $X$ and $W^\bot$, proof similar to \eqref{eq: mart} implies that both stochastic exponentials on the right are $\prob-$martingales; hence $\qprob^0$ is well defined, and $\lambda$ in Assumption \ref{ass: Q^0} can be chosen as $\lambda\sqrt{X}$.

  To verify \eqref{eq: Q^0 int}, note
  \begin{equation}\label{eq: exp^psi}
   \mathcal{E}\pare{\int \lambda' \sqrt{X}_s dW^0_s }^\psi_T = \exp\pare{\frac12 (\psi^2-\psi) \lambda' \lambda \int_0^T  X_s ds} \mathcal{E}\pare{\int \psi\lambda'\sqrt{X_s} dW^0_s}_T,
  \end{equation}
  where  $W^0 := W^\rho + \int_0^\cdot \lambda \sqrt{X_s}ds$ is a $\qprob^0-$Brownian motion.
  Following the construction of $\qprob^0$, one can similarly show $\mathcal{E}\pare{\int \psi\lambda'\sqrt{X_s} dW^0_s}$ is a $\qprob^0-$martingale. Hence $\qprob^\psi$ can be defined via
  \[
   \frac{d\qprob^\psi}{d\qprob^0} := \mathcal{E}\pare{\int \psi\lambda'\sqrt{X_s} dW^0_s}_T.
  \]
  Combining the previous two change of measures, the dynamics of $X$ can be rewritten as
  \[
   dX_t = \bra{b\ell - \pare{b- (\psi-1)a \lambda' \rho} X_t} dt + a \sqrt{X_t} dW^\psi_t,
  \]
  where $W^\psi := W + \int_0^\cdot(1-\psi) \lambda' \rho \sqrt{X_s}ds$ is a $1-$dimensional $\qprob^\psi-$Brownian motion. On the other hand, calculation using \eqref{eq: exp^psi} shows
  \[
    \expec^{\qprob^0}\bra{e^{(\psi-1) \int_0^T r_s ds} \mathcal{E}\pare{\int \eta'_s dB^{\qprob^0}_s}_T^\psi} = e^{(\psi-1) r_0 T} \expec^{\qprob^\psi} \bra{\exp\pare{\bra{(\psi-1) r_1 + \frac12 (\psi^2 -\psi)\lambda' \lambda} \int_0^TX_s ds}}.
  \]
  Then Lemma \ref{lem: int CIR} implies that the expectation on the right hand side is finite when
  \[
   (\psi-1) r_1 + \frac12 (\psi^2-\psi)\lambda' \lambda < \frac{\pare{b-(\psi-1)a \lambda' \rho}^2}{2a^2}.
  \]
  This is exactly the assumption in Proposition \ref{prop: Heston} ii).
\end{proof}

\begin{proof}[Proof of Proposition \ref{prop: OU}]
 Assumptions \ref{ass: op bsde}, \ref{ass: phi}, and \ref{ass: Q^0} are verified. Then statements of Theorems \ref{thm: verification} and \ref{thm: state price} follow. We denote $\Theta = \sigma' \Sigma^{-1} \sigma$ throughout the proof to simplify notation.

 \vspace{2mm}

 \noindent{\underline{Assumption \ref{ass: op bsde}}:} Note $\frac{1-\gamma}{\gamma} \mu'(x) \Sigma^{-1}(x) \sigma(x) \rho(x) =\frac{1-\gamma}{\gamma}  (\lambda_0 + \lambda_1 x)' \Theta \rho$. Consider the martingale problem associated to $\overline{\mathcal{L}}:=\bra{-bx + \frac{1-\gamma}{\gamma}a (\lambda_0 + \lambda_1 x)'\Theta \rho} \partial_x + \frac12 a^2 \partial^2_x$ on $\Real$. This martingale problem is well-posed since all coefficients of $\overline{\mathcal{L}}$ have at most linear growth. Then \cite[Remark 2.6]{Cheridito-Filipovic-Yor} implies that the stochastic exponential in Assumption \ref{ass: op bsde} is a $\prob-$martingale, hence $\overline{\prob}$ is well defined. For Assumption \ref{ass: op bsde} ii), $h(x) = (1-\gamma) (r_0+r_1 x) + \frac{1-\gamma}{2\gamma} (\lambda_0 + \lambda x)' \Theta (\lambda_0+ \lambda_1 x)$ is bounded from below when either $r_1=0$ or $\lambda_1' \Theta \lambda_1 >0$. Since $X$ is another Ornstein-Uhlenbeck process, with modified linear drift, under $\overline{\prob}$, then $X$ has all finite moments, cf. \cite[Chapter 5, Equation (3.17)]{Karatzas-Shreve-BM}, then Assumption \ref{ass: op bsde} ii) is satisfied.

 \vspace{2mm}

 \noindent{\underline{Assumption \ref{ass: phi}}:} The operator $\fF$ in \eqref{eq: fF} reads
 \[
 \begin{split}
  \fF[\phi]=& \frac12 a^2 \partial^2_{x}\phi + \pare{-bx + \frac{1-\gamma}{\gamma} a (\lambda_0 + \lambda_1 x)' \Theta \rho} \partial_x \phi + \frac12 a^2 \tilde{M}  (\partial_x \phi)^2\\
  &+ (1-\gamma) (r_0 + r_1 x) + \frac{1-\gamma}{2\gamma} (\lambda_0 + \lambda_1 x)' \Theta (\lambda_0 + \lambda_1 x),
 \end{split}
 \]
 where $\tilde{M} = 1+ \frac{1-\gamma}{\gamma} \rho' \Theta \rho>0$. Consider $\phi(x) = c x^2$, for a positive constant $c$ determined later. It is clear that $\phi(x)\uparrow \infty$ as $|x|\uparrow \infty$. On the other hand, calculation shows
 \begin{align*}
  \fF[\phi] =& c a^2 + 2c\pare{-b x^2 + \frac{1-\gamma}{\gamma}  a (\lambda_0 + \lambda_1 x)' \Theta \rho x} + 2 c^2 a^2 \tilde{M} x^2\\
  &+ (1-\gamma)(r_0 + r_1 x) + \frac{1-\gamma}{2\gamma}(\lambda_0 + \lambda_1 x)' \Theta (\lambda_0 + \lambda_1 x)\\
  =& \pare{-2c b+ 2c\frac{1-\gamma}{\gamma}a \lambda_1' \Theta \rho + 2c^2 a^2 \tilde{M} + \frac{1-\gamma}{2\gamma} \lambda_1' \Theta \lambda_1}x^2 + \text{ lower order terms}.
 \end{align*}
 When $-b+ \frac{1-\gamma}{\gamma} a \lambda_1'\Theta \rho <0$, since $\gamma>1$, $\frac{1-\gamma}{2\gamma} \lambda_1' \Theta \lambda_1\leq 0$, we can choose sufficiently small $c$ such that $\fF[\phi] \downarrow -\infty$ as $|x|\uparrow \infty$. When $\lambda_1' \Theta \lambda_1>0$, then $\frac{1-\gamma}{2\gamma} \lambda'_1 \Theta \lambda_1<0$, we can also choose sufficiently small $c$ such that $\fF[\phi]$ has the same asymptotic behavior.  In both cases, $\fF[\phi]$ is bounded from above on $\Real$, hence Assumption \ref{ass: phi} is verified.

 \vspace{2mm}

 \noindent{\underline{Assumption \ref{ass: Q^0}}:} Consider the martingale problem associated to $\mathcal{L}^0 := [-bx - a (\lambda_0 + \lambda_1 x)' \rho] \partial_x + \frac12 a^2 \partial^2_{x}$ on $\Real$. Since all coefficients have at most linear growth, this martingale problem is well-posed and its solution, denoted by $\qprob^\rho$, satisfies $\frac{d\qprob^\rho}{d\prob} = \mathcal{E}\pare{\int-(\lambda_0 + \lambda_1 X_s)' \rho dW_s}_T$. Define $\qprob^0$ via
 \[
  \frac{d\qprob^0}{d\qprob} = \mathcal{E}\pare{-\int(\lambda_0+\lambda_1 X_s)'(\rho dW_s + \rho^\bot dW^\bot_s)}_T = \mathcal{E}\pare{\int-(\lambda_0 + \lambda_1 X_s)' dW^\rho_s}_T.
 \]
Argument similar to \eqref{eq: mart} implies that $\qprob^0$ is well defined. Therefore $\lambda$ in Assumption \ref{ass: Q^0} can be chosen as $\lambda_0 + \lambda_1 X$.

 To verify \eqref{eq: Q^0 int}, note
 \begin{equation}\label{eq: exp^psi-ou}
 \begin{split}
   \mathcal{E}\pare{\int (\lambda_0 + \lambda_1 X_s)' dW^0_s }^\psi_T
    &= \exp\pare{\frac12 (\psi^2-\psi) \int_0^T  |\lambda_0 + \lambda_1 X_s|^2 ds} \mathcal{E}\pare{\int \psi(\lambda_0 + \lambda_1 X_s)' dW^0_s}_T,
 \end{split}
  \end{equation}
  where  $W^0 := W^\rho + \int_0^\cdot (\lambda_0 + \lambda_1 X_s)ds$ is a $\qprob^0-$Brownian motion.  Following the construction of $\qprob^0$, similar argument shows that $\mathcal{E}\pare{\int \psi(\lambda_0 +\lambda_1 X_s)' dW^0_s}$ is a $\qprob^0-$martingale. Hence $\qprob^\psi$ can be defined via
  \[
   \frac{d\qprob^\psi}{d\qprob^0} := \mathcal{E}\pare{\int \psi(\lambda_0+\lambda_1 X_s)' dW^0_s}_T.
  \]
  Combining the previous two change of measures, the dynamics of $X$ can be rewritten as
  \[
   dX_t = \bra{(\psi-1) a \lambda_0' \rho - \pare{b - (\psi-1) a \lambda_1' \rho}X_t} dt + a dW^\psi_t,
  \]
  where $W^\psi := W + \int_0^\cdot(1-\psi) (\lambda_0 + \lambda_1 X_s)'\rho ds$ is a $1-$dimensional $\qprob^\psi-$Brownian motion. On the other hand, calculation shows, for any $\epsilon>0$,
  \begin{equation}\label{eq: exp Q0 ub}
  \begin{split}
   & \expec^{\qprob^0}\bra{e^{(\psi-1) \int_0^T r_+(X_s) ds} \mathcal{E}\pare{\int \lambda'(X_s) dW^{0}_s}^\psi_T} \\
   &= C \expec^{\qprob^{\psi}}\bra{\exp\pare{(\psi-1) \int_0^T (r_1 X_s)_+ ds + (\psi^2 - \psi) \lambda_0' \lambda_1 \int_0^T X_s ds + \frac12 (\psi^2 -\psi)\lambda'_1 \lambda_1 \int_0^T X_s^2 ds}}\\
   &\leq C_\epsilon  \expec^{\qprob^{\psi}}\bra{\exp\pare{\pare{\frac12 (\psi^2 -\psi)\lambda'_1 \lambda_1 +\epsilon}\int_0^T X_s^2 ds}},
  \end{split}
  \end{equation}
  where $C$ is a constant and $C_\epsilon$ is a constant depending on $\epsilon$.

  In order to appeal Lemma \ref{lem: int CIR} to calculate the expectation on the right hand side of \eqref{eq: exp Q0 ub}, let us introduce another measure $\tilde{\qprob}^\psi$ via $\frac{d\tilde{\qprob}^\psi}{d\qprob^\psi} = \mathcal{E}\pare{- (\psi-1) \lambda'_0 \rho W^\psi_T}$. Under this measure, $X$ has dynamics
  \[
   dX_t = - \pare{b- (\psi-1) a \lambda'_1 \rho} X_t dt + a d\tilde{W}^\psi_t,
  \]
  where $\tilde{W}^\psi:=W^\psi + \int_0^\cdot (\psi-1) \lambda'_0 \rho \,ds$ is a $\tilde{\qprob}^\psi-$Brownian motion.  Let $Y:= X^2$. It then has dynamics
 \[
  dY_t = \bra{a^2 - 2\pare{b-(\psi-1) a \lambda_1' \rho} Y_t} dt + 2a\sqrt{Y_t} d\tilde{W}^\psi,
 \]
 which is of the same type of $X$ in Lemma \ref{lem: int CIR}.

 Come back to \eqref{eq: exp Q0 ub}, H\"{o}lder's inequality implies, for any $\delta>0$,
 \begin{align*}
  &\expec^{\qprob^{\psi}}\bra{\exp\pare{\pare{\frac12 (\psi^2 -\psi)\lambda'_1 \lambda_1 +\epsilon}\int_0^T X_s^2 ds}} \\
  &= \expec^{\tilde{\qprob}^\psi}\bra{\frac{d\qprob^\psi}{d\tilde{\qprob}^\psi} \exp\pare{\pare{\frac12 (\psi^2 -\psi)\lambda'_1 \lambda_1 +\epsilon}\int_0^T X_s^2 ds}}\\
  &\leq \expec^{\tilde{\qprob}^\psi}\bra{\pare{\frac{d\qprob^\psi}{d\tilde{\qprob}^\psi}}^{\frac{1+\delta}{\delta}}}^{\frac{\delta}{1+\delta}}
  \expec^{\tilde{\qprob}^\psi}\bra{\exp\pare{(1+\delta)\pare{\frac12 (\psi^2 -\psi)\lambda'_1 \lambda_1 +\epsilon}\int_0^T X_s^2 ds}}^{\frac{1}{1+\delta}}.
 \end{align*}
 Observe that the first expectation on the right hand side is finite, since $\frac{d\qprob^\psi}{d\tilde{\qprob}^\psi} = \mathcal{E}\pare{(\psi-1) \lambda'_0 \rho \tilde{W}^\psi_T}$ has all finite moments. For the second expectation, we can choose sufficiently small $\delta$ and $\epsilon$ such that, according to Lemma \ref{lem: int CIR}, when
 \begin{equation}\label{eq: para OU ineq}
  \frac12 (\psi^2 -\psi) \lambda'_1\lambda_1 < \frac{4\pare{b- (\psi-1)a \lambda'_1 \rho}^2}{8a^2},
 \end{equation}
 the second expectation is finite. Now combining the previous estimates and \eqref{eq: exp Q0 ub}, we confirm \eqref{eq: Q^0 int}. Finally, note that \eqref{eq: para OU ineq} is exactly the assumption in Proposition \ref{prop: OU} ii).
\end{proof}

\bibliographystyle{siam}
\bibliography{biblio}

\end{document}